\newcommand{\equ}[1]{(\ref{#1})}
\long\def\symbolfootnote[#1]#2{\begingroup%
\def\thefootnote{\fnsymbol{footnote}}\footnote[#1]{#2}\endgroup}
\newcommand\R{{\mathbb R}}
\newtheorem{lemma}{Lemma}[section]
\newtheorem{proposition}{Proposition}[section]
\newtheorem{theorem}{Theorem}[section]
\let\Section=\section
\def\section{\setcounter{equation}{0}\Section}
\begin{document}
\title[Nondegeneracy of Nonradial Nodal Solutions to Yamabe
Problem]{Nondegeneracy of Nonradial  Nodal Solutions to Yamabe
 Problem}

\author{Monica Musso}
\address{\noindent M. Musso - Departamento de Matem\'atica,
Pontificia Universidad Catolica de Chile, Avda. Vicu\~na Mackenna
4860, Macul, Chile}\email{mmusso@mat.puc.cl}

\author{Juncheng Wei}
\address{\noindent J. Wei -Department of Mathematics, University of British Columbia, Vancouver, BC V6T1Z2, Canada }\email{jcwei@math.ubc.ca}

\thanks{ The research of the first  author
has been partly supported by Fondecyt Grant 1120151. The research of the third author is partially supported by NSERC of Canada.}

\maketitle

\vskip 0.2cm \arraycolsep1.5pt
\newtheorem{Lemma}{Lemma}[section]
\newtheorem{Theorem}{Theorem}[section]
\newtheorem{Definition}{Definition}[section]
\newtheorem{Proposition}{Proposition}[section]
\newtheorem{Remark}{Remark}[section]
\newtheorem{Corollary}{Corollary}[section]

\maketitle

\noindent {\bf Abstract}: We provide the first example of  a sequence of {\em nondegenerate}, in the sense of  Duyckaerts-Kenig-Merle \cite{DKM},  nodal nonradial solutions    to the  critical Yamabe problem
$$ -\Delta Q= |Q|^{\frac{2}{n-2}} Q, \ \ Q \in {\mathcal D}^{1,2} (\R^n). $$

\vspace{3mm}

\maketitle

\section{Introduction}\label{intro}

In this paper we consider the critical Yamabe problem
\begin{equation}
\label{1m}
-\Delta u = \frac{ n(n-2)}{4} |u|^{\frac{4}{n-2}} u, \quad u \in {\mathcal D}^{1,2} (\R^n)
\end{equation}
where $n\geq 3$ and $ {\mathcal D}^{1,2} (\R^n)$ is the completion of $C_0^\infty (\R^n)$ under the norm $ \sqrt{ \int_{\R^n} |\nabla u|^2} $.

If $u>0$ Problem (\ref{1m}) is the conformally invariant Yamabe problem. For sign-changing $u$ Problem (\ref{1m}) corresponds to  the steady state of the energy-critical focusing nonlinear wave equation
\begin{equation}
\label{2m}
\partial_t^2 u-\Delta u- |u|^{\frac{4}{n-2}} u=0, \ (t,x)\in \R \times \R^n.
\end{equation}
These are classical problems that have attracted the attention of several researchers in order to understand the structure and properties of the solutions to Problem  (\ref{1m}).

\medskip
\noindent
Denote the set of non-zero finite energy solutions to Problem  (\ref{1m}) by
\begin{equation}
\label{3m}
\Sigma:= \left\{ Q \in {\mathcal D}^{1,2} (\R^n) \backslash \{0\}: \ -\Delta Q= \frac{n(n-2)}{4} |Q|^{\frac{4}{n-2}} Q\right \}.
\end{equation}
This set has been completely characterized in the class of positive solutions to Problem (\ref{1m})  by the classical work of Caffarelli-Gidas-Spruck \cite{CGS} (see also \cite{aubin, obata, talenti}): all positive solutions to (\ref{1m}) are radially symmetric around some point $a \in \R^n$ and are of the form
\begin{equation}
W_{\lambda, a}(x)= \Big(\frac{\lambda}{\lambda^2 +|x-a|^2}\Big)^{\frac{n-2}{2}}, \ \lambda>0.
\end{equation}
Much less is known in the sign-changing case. A direct application of Pohozaev's identity gives that all sign-changing solutions  to Problem \equ{1m} are non-radial. The existence of elements of $\Sigma$  that are nonradial sign-changing, and with arbitrary large energy was first proved by Ding \cite{D} using  Ljusternik-Schnirelman  category theory.  Indeed,
 via stereographic projection to $S^n$  Problem \equ{1m} becomes
$$
\Delta_{S^n}  v +  \frac{ n(n-2)}{4}  ( |v|^{\frac 4{n-2}} v - v) = 0 \quad \hbox{in } S^n,
$$
(see for instance  \cite{Sch-Yau}, \cite{Heb}) and
  Ding  showed the existence of infinitely many critical points to the associated energy functional within  functions of the form
$$  v(x)= v(|x_1|,|x_2|), \quad x= (x_1, x_2) \in  S^n \subset \R^{n+1}= \R^k\times \R^{n+1-k}, \quad k\ge 2, $$
where compactness of critical Sobolev's embedding holds, for any $n\ge 3$. No other qualitative properties are known for the corresponding solutions. Recently more explicit constructions of sign changing solutions to Problem \equ{1m} have been obtained by del Pino-Musso-Pacard-Pistoia \cite{dmpp1, dmpp2}.  However so far only existence is available, and there are no rigidity results on these solutions.

\medskip
\noindent
The main purpose of this paper is to prove that these solutions are rigid, up to the transformations of the equation. In other words, these solutions are {\em nondegenerate}, in the sense of the definition introduced by Duyckaerts-Kenig-Merle in \cite{DKM}.
Following \cite{DKM}, we first find out all possible invariances of the equation (\ref{1m}). Equation (\ref{1m}) is invariant under the following four transformations:

\medskip
\noindent
(1) (translation): If $Q\in \Sigma$ then $  Q(x+a) \in \Sigma, \forall a \in \R^n$;

\medskip
\noindent
(2) (dilation): If $Q \in \Sigma$ then $ \lambda^{\frac{n-2}{2}} Q(\lambda x) \in \Sigma, \forall \lambda >0$;

\medskip
\noindent
(3) (orthogonal transformation): If $Q\in \Sigma$ then $ Q(Px) \in \Sigma$ where $P \in {\mathcal O}_n$ and ${\mathcal O}_n$ is the classical orthogonal group;

\medskip
\noindent
(4) (Kelvin transformation): If $Q \in \Sigma$ then $|x|^{2-N} Q (\frac{x}{|x|^2}) \in \Sigma$.

If we denote by ${\mathcal M}$  the group of isometries of ${\mathcal D}^{1,2} (\R^n)$ generated by the previous four transformations, a result of Duyckaerts-Kenig-Merle [Lemma 3.8,\cite{DKM}] states that ${\mathcal M}$ generates an $N-$parameter family of transformations in a neighborhood of the  identity, where the dimension $N$ is given by
\begin{equation}
\label{4m}
N= 2n+1 +\frac{n(n-1)}{2}.
\end{equation}
In other words, if $Q \in \Sigma$ we denote
$$ L_{Q}:=-\Delta - \frac{n(n+2)}{4} |Q|^{\frac{4}{n-2}} $$
 the linearized operator around $Q$. Define the null space of $L_Q$
\begin{equation}
{\mathcal Z}_Q=\left \{ f \in {\mathcal D}^{1,2} (\R^n): \ L_{Q} f=0 \right\}
\end{equation}
The elements in ${\mathcal Z}_Q$ generated by the family of transformations ${\mathcal M}$ define the following vector space
\begin{equation}
\label{Zq}
\tilde{{\mathcal Z}}_{Q}= \mbox{span} \ \left\{ \begin{array}{l}
(2-n) x_j Q +|x|^2 \partial_{x_j} Q- 2 x_j x \cdot \nabla Q, \quad  \partial_{x_j} Q, \quad 1\leq j\leq n, \\
\\
(x_j \partial_{x_k} - x_k \partial_{x_j})Q, \quad 1\leq j <k\leq n, \quad \frac{n-2}{2} Q + x \cdot Q \end{array}
\right\}.
\end{equation}
Observe that the dimension of $\tilde{{\mathcal Z}}_Q$ is at most $N$, but in principle it could be strictly less than $N$. For example in the case of the positive solutions $Q= W$, it turns out that  the dimension of $\tilde{{\mathcal Z}}_Q$ is $n+1$ as a consequence of being $Q$  radially symmetric. Indeed, it is known that
\begin{equation}
\tilde{{\mathcal Z}}_{W}=\left \{ \frac{n-2}{2} W+ x \cdot \nabla W, \quad \partial_{x_j} W, \quad 1\leq j\leq n \right\}.
\end{equation}

Duyckaerts-Kenig-Merle \cite{DKM} introduced the following definition of nondegeneracy for a solution of Problem \equ{1m}: $Q\in \Sigma$ is said to be {\em nondegenerate} if
\begin{equation}
\label{5m}
{\mathcal Z}_Q= \tilde{{\mathcal Z}}_{Q}.
\end{equation}

So far the only nondegeneracy example of $Q\in \Sigma$ is the positive solution $W$. The proof of this fact relies heavily on the radial symmetry of $W$ and  it is straightforward: In fact since $Q=W$ is radially symmetric (around some point) one can decompose the linearized operator into Fourier modes, getting \equ{5m} as consequence of a simple ode analysis. See also \cite{rey}. In the case of nodal (nonradial) solutions this strategy no longer works out.

The knowledge of nondegeneracy is a crucial ingredient to show the soliton resolution for a solution to the
energy-critical wave equation (\ref{2m})   with the {\it compactness property} obtained by Kenig and Merle in \cite{KM1, KM2}. If the dimension $n$ is $3,4$ or $5$, and  under the above nondegeneracy assumption,  they prove that any non zero such solution is a  sum of
stationary solutions and solitary waves that are Lorentz transforms of the former.

\medskip
The main result of this paper can be stated as follows:

\medskip
\noindent
{\em {\bf Main Result:} Assume that $n \geq 4$. Then there exists a sequence of nodal solutions to (\ref{1m}), with arbitrary large energy, such that they are nondegenerate in the sense of (\ref{5m}).
}
\medskip

Now let us be more precise.

Let
\begin{equation}
\label{nonlinearity}
f(t) = \gamma \,  |t|^{p-1} \,  t , \quad {\mbox {for}} \quad t \in \R, \quad {\mbox {and}} \quad p={n+2 \over n-2}.
\end{equation}
The constant $\gamma >0$ is chosen for normalization purposes to be
$$
\gamma = {n (n-2) \over 4}.
$$
In  \cite{dmpp1}, del Pino, Musso, Pacard and Pistoia  showed that Problem
\begin{equation}
\label{eq}
\Delta u + f( u ) = 0 \quad {\mbox {in}} \quad \R^n,
\end{equation}
admits a sequence of entire non radial sign changing solutions with finite energy.

\noindent
To give a first description of these solutions, let us introduce some notations.
Fix an integer $k$. For any integer $l=1, \ldots , k$, we define angles $\theta_l$ and vectors $\textsf{n}_l$, $\textsf{t}_l$ by
\begin{equation}
\label{defanglespoints}
\theta_l = {2\pi \over k} \, (l-1), \quad \textsf{n}_l = (\cos \theta_l , \sin \theta_l, \textsf{0} ), \quad \textsf{t}_l = (-\sin \theta_l , \cos \theta_l , \textsf{0}).
\end{equation}
Here $\textsf{0}$ stands for the zero vector in $\R^{n-2}$. Notice that  $\theta_1=0$, $\textsf{n}_1 = (1, 0, \textsf{0})$, and  $\textsf{t}_1 = (0, 1,\textsf{0})$.

\medskip
\noindent
In  \cite{dmpp1}  it was proved that there exists $k_0$ such that for all integer $k>k_0$
there exists a solution $u_k$ to \equ{eq} that can be described as follows
\begin{equation}
\label{sol}
u_k(x) =U_* (x) +\tilde  \phi (x).
\end{equation}
where
\begin{equation}
\label{defU*}
U_* (x) =  U(x) - \sum_{j=1}^k U_j (x),
\end{equation}
while $\tilde \phi$ is smaller than $U_*$.
The functions $U$ and  $U_j$ are positive solutions to \equ{eq}, respectively defined as
\begin{equation}
\label{basic}
U(x) = \left( {2 \over 1+ |x|^2} \right)^{n-2 \over 2}, \quad U_j (x) = \mu_k^{-{n-2 \over 2}} U(\mu_k^{-1} (x-\xi_j )).
\end{equation}
For any integer $k$ large, the parameters $\mu_k  >0$ and the $k$ points $\xi_l$, $l=1, \ldots , k$
are given by
\begin{equation} \label{parameters}
\left[  \sum_{l > 1}^k {1 \over (1-\cos \theta_l )^{n-2 \over 2}} \right] \, \mu_k^{n-2 \over 2} =  \left(1+ O ({1\over k}) \right) , \quad {\mbox {for}} \quad k \to \infty
\end{equation}
in particular $ \mu_k \sim k^{-2}$ as $k \to \infty$, and
\begin{equation} \label{parameters1}
 \xi_l = \sqrt{1-\mu^2} \, ( \textsf{n}_l , 0).
\end{equation}
The functions $U$, $U_j$ and $U_*$ are invariant under rotation of angle ${2\pi \over k}$ in the $x_1 , x_2$ plane, namely
\begin{equation}
\label{sim00}
U( e^{2\pi \over k} \bar x , x' ) = U(\bar x , x' ), \quad \bar x= (x_1, x_3) , \quad x'= (x_3, \ldots , x_n ).
\end{equation}
They are even in the $x_j$-coordinates, for any $j=2, \ldots , n$
\begin{equation}
 U (x_1,\ldots,x_j, \ldots, x_{n}  ) =  U (x_1,\ldots,-x_j, \ldots, x_{n}  ),\quad  j=2,\ldots,n
\label{sim22}\end{equation}
and they respect invariance under Kelvin's transform:
\begin{equation}
 U (x )\  = \  |x|^{2-n} U (|x|^{-2} x)\ .
\label{sim33}\end{equation}

In \equ{sol}  the function $\tilde \phi$ is a small function when compared with $U_*$. We will further describe the function $u$, and in particular the function $\tilde \phi$ in Section \ref{des}.
Let us just mention that
$\tilde \phi$ satisfies all the symmetry properties \equ{sim00}, \equ{sim22} and \equ{sim33}.

\medskip
 Recall that Problem \equ{eq} is invariant under the four transformations mentioned before: translation,  dilation, rotation and Kelvin transformation.
These invariances will be reflected in the element of the kernel of the linear operator
\begin{equation}
\label{linearop}
L(\varphi ) :=\Delta \varphi + f' (u_k) \varphi =  \Delta \varphi + p \gamma |u_k |^{p-2} u \, \varphi
\end{equation}
which is the linearized equation associated to \equ{eq} arount $u_k$.

\medskip

From now on, for simplicity we will drop the label $k$ in $u_k$, so that $u$ will denote the solution to Problem \equ{eq} described in \equ{sol}.

\medskip

Let us introduce the following set of $3n$ functions
\begin{equation}
\label{capitalzeta0}
z_0 (x) = {n-2 \over 2} u(x) + \nabla u (x) \cdot x ,
\end{equation}
\begin{equation}
\label{capitalzetaj}
z_\alpha (x)  = {\partial \over \partial x_\alpha  } u(x) , \quad {\mbox {for}} \quad \alpha=1, \ldots , n,
\end{equation}
and
\begin{equation}
\label{capitalzeta2}
z_{n+1} (x) = -x_2  {\partial \over \partial x_1 } u(x) + x_1  {\partial \over \partial x_2  } u(x)
\end{equation}
where $u$ is the solution to \equ{eq} described in \equ{sol}.
Observe that $z_{n+1}$ is given by
$$
 z_{n+1} (x) =   {\partial \over \partial \theta} [u(R_\theta x) ]_{|\theta=0}
$$
where $R_\theta$ is the rotation in the $x_1, x_2$ plane of angle $\theta$.
Furthermore,
\begin{equation}
\label{chico1}
z_{n+2} (x) = -2 x_1 z_0 (x) + |x|^2 z_1 (x) , \quad z_{n+3} (x) =  -2 x_2 z_0 (x) + |x|^2 z_2 (x)
\end{equation}
for $l=3, \ldots , n$
\begin{equation}
\label{chico2}
z_{n+l+1} (x) = -x_l z_1 (x) + x_1 z_l (x), \quad u_{2n+l-1} (x) =  -x_l z_2 (x) + x_2 z_l (x).
\end{equation}
The functions defined in \equ{chico1} are related to the invariance of Problem \equ{eq} under Kelvin transformation, while the functions defined in \equ{chico2} are related to the invariance under rotation in
the $(x_1 , x_l)$ plane and in the $(x_2 , x_l)$ plane respectively.

\bigskip

The invariance of Problem \equ{eq} under scaling, translation, rotation and Kelvin transformation gives that
the set $\tilde{{\mathcal Z}}_Q$ (introduced in (\ref{Zq})) associated to
the linear operator $L$ introduced in \equ{linearop} has dimension at least $3n$, since
\begin{equation}
\label{r1}
L(z_\alpha ) = 0 , \quad \alpha=0, \ldots , 3n-1.
\end{equation}
We shall show that these functions are the {\em only} bounded elements of the kernel of the operator $L$.
In other words, the sign changing solutions \equ{sol} to Problem \equ{eq} constructed in \cite{dmpp1} are non degenerate in the sense of Duyckaerts-Kenig-Merle \cite{DKM}.

\medskip
\noindent
To state our result, we introduce the following function: For any $i$ integer, we define
$$
P_i (x) = \sum_{l=1}^\infty {\cos ( l \, x ) \over l^i }
\quad
{\mbox {and}} \quad
Q_i (x) = \sum_{l=1}^\infty {\sin ( l \, x ) \over l^i }.
$$
Up to a normalization constant, when $n$ is even, $P_n$ and $Q_n$ are related to  the Fourier series of the  Bernoulli polynomial $B_n (x)$, and when $n$ is odd $P_n$  and $Q_n$ are related to the Fourier series of  the Euler polynomial $E_n (x)$. We refer to \cite{as} for further details.

We now define
\begin{equation}
\label{defg}
g(x) = \sum_{j=1}^\infty {1-\cos (jx) \over j^n} , \quad 0 \leq x \leq \pi.
\end{equation}
which can be rewritten as
$$
g(x) = P_n (0) - P_n (x).
$$
Observe that
$$
 \quad g'(x) = Q_{n-1} (x), \quad g ''(x) = P_{n-2} (x).
$$

\begin{theorem}\label{teo} Assume that $n\geq 4$ and that
\begin{equation}
\label{condig}
g'' (x) < {n-2 \over n-1} {(g'(x) )^2 \over g(x) } \quad \forall x \in (0,\pi ).
\end{equation}
Then all  bounded solutions to the equation
$$
L(\varphi ) = 0
$$
are a linear combination of the functions $z_\alpha (x) $, for $\alpha=0, \ldots, 3 n-1$.
\end{theorem}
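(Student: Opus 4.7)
The plan is to argue by contradiction: assume $\varphi$ is a bounded element of $\ker L$ which is not a linear combination of the $z_\alpha$'s. After normalizing $\|\varphi\|_\infty=1$ and subtracting a suitable projection onto $\mathrm{span}\{z_0,\dots,z_{3n-1}\}$ (via duality against compactly supported test kernels localized near each of the $k+1$ bubbles $U,U_1,\dots,U_k$), I would reduce to the case in which $\varphi$ satisfies orthogonality conditions encoding the $3n$ invariances, and then derive a contradiction by showing $\varphi\equiv 0$.

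The first ingredient is the symmetry of $u=u_k$. Since $L$ commutes with the cyclic group $\mathbb{Z}_k$ that rotates $(x_1,x_2)$ by $2\pi/k$, with each reflection $x_j\mapsto -x_j$ for $j\ge 2$, and with the Kelvin inversion, one may decompose $\varphi$ into isotypic components and handle each separately; in particular only angular Fourier modes compatible with the $3n$ candidate kernels need to be analyzed. The second ingredient is the concentration structure of $u$: rescaling $\hat\varphi_j(y)=\mu_k^{(n-2)/2}\varphi(\xi_j+\mu_k y)$ at each small bubble (and $\varphi$ itself at the outer bubble), elliptic estimates together with the nondegeneracy of the Aubin-Talenti bubble $U$ force the leading profile of $\hat\varphi_j$ to be a linear combination of $Z_0:=\tfrac{n-2}{2}U+x\cdot\nabla U$ and $Z_l:=\partial_{x_l}U$, $l=1,\dots,n$. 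This associates to each bubble a coefficient vector $\mathbf{c}^j\in\R^{n+1}$ carrying the essential information about $\varphi$.

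The core of the argument is to write down the linear system satisfied by the $\mathbf{c}^j$'s. Using the integral representation
$$
\varphi(x)=c_n\int_{\R^n}|x-y|^{2-n}f'(u(y))\,\varphi(y)\,dy
$$
together with the fact that $f'(u)$ is concentrated near the bubbles, I would expand $\varphi$ on a matching annulus around each $\xi_j$: the interaction between distinct bubbles produces terms involving $|\xi_l-\xi_j|^{2-n}$ and its first two derivatives (the latter reflecting the translation and dilation kernels on the $l$-th bubble), contracted with $\mathbf{c}^l$. Because the $\xi_l$ form a regular $k$-polygon, the sums $\sum_{l\neq j}$ of these quantities admit closed forms given by exactly the Bernoulli/Fourier-type series $g,g',g''$ evaluated at the angles $\theta_l$, which is how \equ{defg} enters. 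The resulting finite-dimensional linear system should have a $3n$-dimensional kernel generated by the traces of $z_0,\dots,z_{3n-1}$, provided a $2\times 2$ sub-block coupling the dilation direction with the tangential translation direction at each bubble is non-singular; a direct computation should identify this non-singularity with the inequality \equ{condig}.

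The main obstacle will be making the matching step quantitative. The hardest work is to obtain sharp pointwise decay of $\varphi$ away from the bubbles, of the form $|\varphi(x)|\lesssim \sum_{j=0}^{k}(1+|x-\xi_j|/\mu_k)^{-\sigma}$ for some $\sigma>0$, which is required to ensure that the remainder in the finite-dimensional reduction is truly negligible and that no bounded kernel mode can hide between the bubbles. This typically combines a barrier/maximum principle argument with the nondegeneracy of the limit profile $U$; but here the coupling between the outer bubble and the ring of $k$ inner bubbles introduces subtle lower-order corrections that must be tracked precisely in order for the reduced matrix to read off exactly the combination $g''-\tfrac{n-2}{n-1}(g')^2/g$ appearing in \equ{condig}.
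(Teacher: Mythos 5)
Your outline follows the same strategy as the paper: a Lyapunov--Schmidt type reduction of a bounded kernel element onto the $(n+1)(k+1)$ approximate kernel functions $Z_{\alpha l}$ localized at the outer bubble and at the $k$ ring bubbles, a finite-dimensional linear system for the coefficient vectors whose blocks are circulant because the $\xi_l$ form a regular polygon, closed-form evaluation of the interaction sums through $g$, $g'$, $g''$, and identification of \equ{condig} with the non-vanishing of the per-mode determinant coupling the dilation and tangential-translation directions (in the paper this is $\ell_m\propto g''\cdot[(n-2)(g')^2-(n-1)gg'']$ for the modes $m\neq 0,1,k-1$). Where you differ technically: you propose to derive the reduced system by expanding the Green's representation of $\varphi$ and matching on annuli, whereas the paper simply tests $L(\tilde\varphi)=0$ against each $Z_{\beta j}$ and computes the interaction integrals $\int L(Z_{\alpha i})Z_{\beta j}$ directly (Appendix); and you propose pointwise barrier estimates for the remainder, whereas the paper controls $\varphi^\perp$ in weighted $L^\infty$ norms via an invertibility theory for the single-bubble linearization combined with the Kelvin transform (Proposition \ref{gino}), yielding the quantitative bound $\|\varphi^\perp\|_*\leq C\mu^{1/2}\sum_\alpha\|\textsf{c}_\alpha\|$ which, played against the reverse estimate $\sum_\alpha\|\textsf{c}_\alpha\|\leq C\|\varphi^\perp\|_*$, closes the argument. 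Both substitutions are plausible, though the testing route is what makes the circulant entries explicitly computable.

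The one place your plan is genuinely under-specified is the treatment of the resonant Fourier modes. After diagonalizing the circulant blocks, the per-mode $3\times 3$ system is \emph{singular} exactly at $m=0,1,k-1$ (and likewise $H_\alpha$ is singular on the modes carried by ${\bf 1}_k$, ${\bf{cos}}$, ${\bf{sin}}$); these degeneracies are precisely where the genuine kernel elements $z_0,\dots,z_{3n-1}$ live, so the reduced system determines the coefficients only up to a $3n$-parameter family. It is not automatic that your initial orthogonalization against the $z_\alpha$'s kills these free parameters: one needs the explicit decompositions of each exact kernel element $z_\beta$ as a combination of the bubble modes $Z_{\alpha l}$ plus controlled remainders $\pi_\alpha$ (formulas \equ{ang1}--\equ{canada5}), both to verify that the solvability conditions on the singular modes are automatically satisfied and to convert the orthogonality conditions \equ{orto} into $3n$ independent linear constraints (Proposition \ref{danilo}) that uniquely fix the residual parameters. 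Without carrying out this bookkeeping --- which also requires the estimates on the correction $\tilde\phi$ and its derivatives from Proposition \ref{prop1} --- the reduction does not conclude, even granting \equ{condig}.
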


When $n=4$, let us check the condition (\ref{condig}): let $ x=2\pi t, t \in (0, \frac{1}{2})$. Using the explicit formula for the Bernoulli polynomial $B_4$ we find that
\begin{equation}
g (t)= t^2 (1-t)^2
\end{equation}
and hence (\ref{condig}) is reduced to showing
\begin{equation}
12 t^2 -12 t +2 < \frac{8}{3} (1+t)^2, \quad  t \in (0, \frac{1}{2})
\end{equation}
which is trivial to verify.

In general we believe that condition (\ref{condig}) should be true for any dimension $n\geq 4$. In fact, we have checked (\ref{condig}) numerically, up to dimension $n \leq 48$. Nevertheless, let us mention that even if (\ref{condig}) fails, our result is still valid for a   {\em  subsequence $u_{k_j}$, $k_j \to +\infty$}, of solutions
\equ{sol} to Problem \equ{eq}. Indeed, also in this case, our proof can still go through by choosing {\em a subsequence $k_j \to +\infty$} in order  to avoid the resonance. Furthermore, let us mention that the  $n=3$ case can also be treated, being the difference with the case $n\geq 4$ in several estimates and  in the definition of the parameters $\mu_k$ in \equ{parameters}, where a $log$-correction term is needed. Indeed, when $n=3$, one has $\mu_k \sim k^{-2} (\ln k)^{-2}$, as $k \to \infty$ (see \cite{dmpp1}).  We leave the $n=3$ case for a forthcoming work.

\medskip
\noindent
We end this section with some remarks.

First: very few are the results on sign-changing solutions to the Yamabe problem. In the critical
exponent case and $n = 3$ the topology of lower energy level sets was analyzed in Bahri-Chanillo \cite{bahri} and Bahri-Xu \cite{BX}. For the construction of sign-changing bubbling solutions we refer to  Hebey-Vaugon \cite{HV}, Robert-Vetois \cite{RV1, RV2}, Vaira \cite{vaira} and the references therein.  We believe that the non-degeneracy property established in Theorem \ref{teo} may be used to obtain new type of constructions for sign changing bubbling solutions.

Second: as far as we know  the kernels due to the Kelvin transform (i.e. $ -2x_j z_0 + |x|^2 z_j$)  were first used by Korevaar-Mazzeo-Pacard-Schoen \cite{KMPS} and  Mazzeo-Pacard (\cite{MP}) in the  construction of isolated singularities for Yamabe problem by using a gluing procedure.
An interesting question is to determine if and how  the non-degenerate sign-changing solutions can used in gluing methods.

\bigskip

\noindent
{\bf Acknowledgements:} The authors express their deep thanks to Professors M. del Pino and  F. Robert for stimulating discussions. We thank Professor C. Kenig for communicating
his unpublished result \cite{DKM}.

\section{Description of the solutions}\label{des}

 In this section we describe the solutions $u_k$ in \equ{sol}, recalling some properties that have already been established in  \cite{dmpp1}, and
adding some further properties that will be useful for later purpose.

In terms of the function $ \tilde \phi$ in the decomposition \equ{sol}, equation \equ{eq} gets re-written as
\begin{equation}
\Delta \tilde  \phi +  p\gamma|U_*|^{p-1}\tilde \phi +   E + \gamma  N(\tilde \phi) = 0
\label{eq1}\end{equation}
where $E$ is defined
by
\begin{equation}
 \label{error}
E= \Delta U_* + f(U_* )
\end{equation}
and
$$
N(\phi) =  |U_* + \phi |^{p-1} (U_* + \phi)  - |U_*  |^{p-1} U_* - p|U_*|^{p-1} \phi.
$$
The function $E$ defined in \equ{error} is the so called Error function. One has a precise control of the size of the function $E$ when measured for instance in the following norm.
Let us fix a number $q$, with $ \frac n2 < q < n$, and consider the weighted $L^q$ norm
\begin{equation}
\|h\|_{**} =  \|  \, (1+ |y|)^{{n+2} - \frac {2n} q} h \|_{L^q(\R^n)}.
\label{nomstar2} \end{equation}

In  \cite{dmpp1} it is proved that there exist an integer $k_0$ and a positive constant $C$ such that for all $k\geq k_0$ the following estimates
hold true
\begin{equation}
\label{rivoli4}
\| E \|_{**} \leq C k^{1-{n\over q}} \quad {\mbox {if}} \quad n\geq 4
\end{equation}

\medskip

To be more precise, we have  estimates for the $\| \cdot \|_{**}$-norm of the error term $E$
first in the {\it exterior region} $\bigcap_{j=1}^k \{ |y-\xi_j| > {\eta \over k}  \} $, and also in the {\it interior regions} $ \{ |y-\xi_j| < {\eta \over k}  \} $, for any $j=1, \ldots , k$. Here $\eta >0$ is a positive and small constant,  independent of $k$.

\medskip

\medskip
\noindent {\it In the exterior region}. We have
$$
\|  \, (1+ |y|)^{{n+2} - \frac {2n} q} E (y)  \|_{L^q(\bigcap_{j=1}^k \{ |y-\xi_j| > {\eta \over k}  \})} \leq C k^{1-{n\over q}}
$$

\medskip
\noindent {\it In the interior regions}.
Now, let $|y-\xi_j| < {\eta \over k} $ for some $j \in \{ 1, \ldots , k\}$ fixed.
It is convenient to measure the error after a change of scale.
Define
$$
\tilde E_j(y) :=  \mu^{\frac {n+2}2} E ( \xi_j + \mu y ) , \quad |y| < \frac \eta {\mu  k}
$$
We have
$$
\|  \, (1+ |y|)^{{n+2} - \frac {2n} q}\tilde E_j (y) \|_{L^q( |y-\xi_j| <{\eta \over \mu k })  } \leq C k^{-{n\over q}}.
$$
We refer the readers to \cite{dmpp1}.

\medskip
\noindent

The function $ \tilde \phi $ in \equ{sol} can be further decomposed. Let us introduce  some cut-off functions $\zeta_j$
to be defined as follows.
Let $\zeta(s)$ be a smooth function such that $\zeta(s) = 1$ for $s<1$ and $\zeta(s)=0$ for $s>2$. We also let $\zeta^-(s) = \zeta(2s)$.
Then we set
$$
 \zeta_j(y) = \left\{ \begin{matrix}   \zeta(\, k\eta^{-1} |y|^{-2} |( y  -\xi|y|)\,  |\, )  & \hbox{ if } |y|> 1\, , \\ & \\     \zeta{ (\,k \eta^{-1}\,|y-\xi|\, )}   & \hbox{ if } |y|\le  1\, , \end{matrix}
  \right. \
$$
in such a way that that
$$
\zeta_j( y) = \zeta_j( y/|y|^2).
$$
The function $\tilde \phi$  has the form
\begin{equation} \label{decphi}
\tilde  \phi\  =\  \sum_{j=1}^k  \tilde \phi_j  + \psi.
\end{equation}
In the decomposition \equ{decphi} the functions $\tilde \phi_j$, for $j>1$, are defined in terms of $\tilde \phi_1$
\begin{equation}
\tilde  \phi_j (\bar y, y')= \tilde \phi_1( e^{\frac{2\pi j} k i} \bar y, y'), \quad j=1,\ldots, k-1.
\label{sim11}\end{equation}
Each function $\tilde \phi_j$, $j=1, \ldots , k$,  is constructed to be a solution in the whole $\R^n$ to the problem
\begin{equation}
\Delta\tilde \phi_j +  p\gamma|U_*|^{p-1}\zeta_j  \tilde \phi_j   + \zeta_{j} [\, p\gamma |U_*|^{p-1}\psi + E + \gamma N(\tilde \phi_j + \Sigma_{i\ne j} \tilde\phi_i  + \psi) ] = 0 ,
\label{eq4}\end{equation}
while  $\psi $ solves in $\R^n$
$$
\Delta \psi  +   p\gamma U^{p-1}\psi
+ \, [\,p\gamma\,(|U_*|^{p-1}- U^{p-1})( 1- \Sigma_{j=1}^k \zeta_{j}) + p\gamma U^{p-1}\Sigma_{j=1}^k \zeta_{j}   \,]\,\psi\
$$
\begin{equation}
+ \ p\gamma |U_*|^{p-1} \sum_j (1-\zeta_j) \tilde \phi_j
+( 1- \Sigma_{j=1}^k \zeta_{j} )\, (\,E + \gamma N( \Sigma_{j=1}^k  \tilde \phi_j + \psi)\,)  \,=\, 0.
\label{eq5}\end{equation}
Define now $\phi_1 (y) = \mu^{n-2 \over 2} \tilde \phi_1 (\mu y + \xi_1 ) $. Then $\phi_1$ solves the equation
$$
\Delta \phi_1 + f' ( U ) \phi_1  + \chi_1 (\xi_1 + \mu y ) \mu^{n+2 \over 2} E(\xi_1 + \mu y )
$$
\begin{equation}
\label{tor1}
 + \gamma \mu^{n+2 \over 2} {\mathcal N} (\phi_1 ) (\xi_1 + \mu y ) = 0 \quad {\mbox {in}} \quad \R^n
\end{equation}
where
$$
 {\mathcal N} (\phi_1 )  = p (|U_*|^{p-1} \zeta_1 - U_1^{p-1} ) \tilde \phi_1 + \zeta_1 [ p
|U_*|^{p-1} \Psi (\phi_1)
$$
\begin{equation}
\label{defNstorta}
 + N ( \tilde \phi_1 + \sum_{j\not= 1} \tilde \phi_j + \Psi (\phi_1) ) ]
\end{equation}
In  \cite{dmpp1} it is shown that
 the following estimate on the function  $\psi$ hold true:
\begin{equation}
\label{estpsi}
 \| \psi \|_{n-2} \leq C k^{1-{n\over q}}
\end{equation}
where
\begin{equation}
 \|\phi\|_{n-2} :=
\|\,(1+ |y|^{n-2}) \phi \, \|_{L^\infty(\R^n)} \ .
\label{nomstar1}\end{equation}
On the other hand, if we rescale and traslate the function $\tilde \phi_1$
\begin{equation}
\label{phiriscalata}
\phi_1 (y) =\mu^{n-2 \over 2}  \tilde \phi_1 (\xi_1 + \mu y )
\end{equation}
we have the validity of the following estimate for $\phi_1$
\begin{equation}
\label{estphi1}
\|  \phi_1 \|_{n-2} \leq C k^{-{n\over q}} .
\end{equation}
Furthermore, we have
\begin{equation}
\label{gianma}
\| {\mathcal N} (\phi_1 ) \|_{**} \leq C k^{- {2n \over q}},
\end{equation}
see \equ{defNstorta}.
Let us now define the following functions
\begin{equation}
\label{ang2}
 \begin{array}{rr}
\pi_\alpha (y)  = {\partial \over \partial y_\alpha } \tilde  \phi (y)  ,  & \quad   {\mbox {for}} \quad \alpha = 1, \ldots , n; \\
 & \\
 \pi_0 (y)   ={n-2 \over 2}\tilde  \phi (y) + \nabla \tilde \phi (y) \cdot y.  &
\end{array}
\end{equation}
In the above formula $\tilde \phi$ is the function defined in \equ{sol} and described in \equ{decphi}.
Observe that the function $\pi_0 $ is even in each of its variables, namely
$$
\pi_0 ( y_1, \ldots , y_j , \ldots , y_n) = \pi_0 ( y_1, \ldots ,- y_j , \ldots , y_n)\quad \forall j=1, \ldots , n,
$$
while $\pi_\alpha$, for $\alpha = 1, \ldots , n$ is odd in the $y_\alpha$ variable, while it is even in all the other variables. Furthermore, all functions $\pi_\alpha$ are invariant under rotation of ${2\pi \over k}$ in the first two coordinates, namely they satisfy \equ{sim00}.
The functions $\pi_\alpha$ can be further described, as follows.

\begin{proposition}
\label{prop1}
The functions $\pi_\alpha$ can be decomposed into
\begin{equation}
\label{martes4}
\pi_\alpha (y) = \sum_{j=1}^k \tilde \pi_{\alpha , j} (y) + \hat \pi_\alpha (y)
\end{equation}
where
$$
\tilde \pi_{\alpha ,  j} (y) = \tilde \pi_{\alpha , 1} (e^{ {2\pi \over k} \, j \, i} \bar y , y').
$$
Furthermore, there exists a positive constant $C$ so that
$$
\| \hat \pi_0 \|_{n-2} \leq C k^{1-{n\over q}}, \quad \| \hat \pi_j \|_{n-1} \leq C k^{1-{n\over q}}, \quad j=1, \ldots , k,
$$
and, if we denote $\pi_{\alpha , 1} (y) = \mu^{n-2 \over 2} \tilde \pi_{\alpha , 1} (\xi_1 + \mu y )$, then
$$
\|  \pi_{0 , 1}  \|_{n-2} \leq C k^{-{n\over q}}, \quad \|  \pi_{\alpha , 1}  \|_{n-1} \leq C k^{-{n\over q}}, \quad \alpha=1, \ldots , n.
$$

\end{proposition}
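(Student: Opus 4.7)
The decomposition \equ{martes4} itself falls out of applying the appropriate first order operator to \equ{decphi}: for $\alpha=1,\dots,n$ the operator is $\pa_{y_\alpha}$, while for $\alpha=0$ it is the scaling operator $\mathcal{S}:=\frac{n-2}{2}+y\cdot\n$. Setting
$$
\tilde\pi_{\alpha,j}:=\pa_{y_\alpha}\tilde\phi_j\ (\alpha\ge 1),\qquad \tilde\pi_{0,j}:=\mathcal{S}\tilde\phi_j,
$$
and $\hat\pi_\alpha$ defined analogously in terms of $\psi$, equation \equ{martes4} is immediate. The rotational identity $\tilde\pi_{\alpha,j}(\bar y,y')=\tilde\pi_{\alpha,1}(e^{\frac{2\pi j}{k}i}\bar y,y')$ follows from the corresponding identity \equ{sim11} for $\tilde\phi_j$, since both $\pa_{y_\alpha}$ (after an appropriate linear combination in the $\bar y$-plane) and $\mathcal{S}$ commute with rotations in the first two coordinates.

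\textbf{Estimates on the inner profiles $\pi_{\alpha,1}$.} After the rescaling \equ{phiriscalata}, the function $\phi_1$ solves \equ{tor1}, namely $L_U\phi_1=-\chi_1\,\mu^{\frac{n+2}{2}}E(\xi_1+\mu y)-\gamma\mu^{\frac{n+2}{2}}\mathcal{N}(\phi_1)(\xi_1+\mu y)$, with $L_U=\D+f'(U)$. I differentiate this identity with respect to $y_\alpha$ (or apply $\mathcal{S}$, for $\alpha=0$) to obtain an elliptic equation for the rescaled $\pi_{\alpha,1}$. The right-hand side is controlled in $\|\cdot\|_{**}$ by the inner bound for $E$ and by \equ{gianma}, both of size $k^{-n/q}$. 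I then invert $L_U$ via the linear theory developed in \cite{dmpp1}, which provides a right-inverse from $\|\cdot\|_{**}$ into weighted $L^\infty$ spaces modulo the known bounded kernel of $L_U$ (namely $Z_0,\dots,Z_n$). The crucial distinction between the two estimated norms reflects the decay of the kernel elements of $L_U$: the translation kernels $\pa_{y_\alpha}U$ decay like $|y|^{-(n-1)}$, giving $\|\pi_{\alpha,1}\|_{n-1}\le Ck^{-n/q}$ for $\alpha\ge 1$, while the scaling kernel $\frac{n-2}{2}U+y\cdot\n U$ decays only like $|y|^{-(n-2)}$, producing $\|\pi_{0,1}\|_{n-2}\le Ck^{-n/q}$.

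\textbf{Estimates on the outer part $\hat\pi_\alpha$.} Applying $\pa_{y_\alpha}$ (resp.\ $\mathcal{S}$) to the outer equation \equ{eq5} satisfied by $\psi$ yields an equation of the same structural form for $\hat\pi_\alpha$, with an additional right-hand side coming from the commutator of the derivative with the cutoffs $\zeta_j$ and with the variable coefficient $|U_*|^{p-1}-U^{p-1}$. Using that each $\n\zeta_j$ is supported in an annulus of width $\sim 1/k$ around $|y-\xi_j|\sim \eta/k$, together with \equ{estpsi} and \equ{estphi1}, these commutator contributions are of size at most $Ck^{1-n/q}$ in the $\|\cdot\|_{**}$-norm. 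Inverting the outer linear operator (again as in \cite{dmpp1}) in the norms $\|\cdot\|_{n-2}$, resp.\ $\|\cdot\|_{n-1}$, gives the bounds stated in the proposition.

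\textbf{Main obstacle.} The delicate step is bookkeeping the commutator terms produced when $\pa_{y_\alpha}$ or $\mathcal{S}$ hit $\zeta_j$, $U_*$, and the coupling between $\tilde\phi_j$ and $\psi$ in \equ{eq4}--\equ{eq5}: these must all be shown to be strictly smaller than the leading-order size one is trying to bound. Equally technical is making the linear theory of \cite{dmpp1} work uniformly for the two different decay norms $\|\cdot\|_{n-2}$ and $\|\cdot\|_{n-1}$, since the projection onto the kernel of $L_U$ must be adjusted to match the correct decay rate of the corresponding kernel element (scaling vs.\ translation). Once these two points are handled, the estimates follow by a contraction/fixed point argument parallel to the one used in \cite{dmpp1} to construct $\tilde\phi$ itself.
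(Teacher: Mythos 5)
Your strategy—apply $\partial_{y_\alpha}$ (resp. the scaling operator) to the decomposition \equ{decphi} and to the equations \equ{tor1}, \equ{eq5}, then re-run the linear theory of \cite{dmpp1} in the appropriately weighted norms—is exactly what the paper intends: the paper gives no proof of this proposition at all, stating only that it "can be obtained using similar arguments as the ones used in \cite{dmpp1}" and leaving the details to the reader. Your sketch is the natural implementation of that remark, and your identification of the two delicate points (the commutator terms from the cutoffs, and the mismatch between the decay rates of the scaling and translation kernel elements, which is what forces the $\|\cdot\|_{n-2}$ versus $\|\cdot\|_{n-1}$ distinction) is consistent with how these estimates are used later in Section \ref{proof1}.
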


The proof of this result can be obtained using similar arguments as the ones used in \cite{dmpp1}. We leave the details to the reader.

\section{Scheme of the proof}

Let $\varphi$ be a bounded function satisfying $L(\varphi ) = 0$, where $L$ is the linear operator defined in \equ{linearop}. We write our function $\varphi$ as
\begin{equation}
\label{defvarphi}
\varphi (x) = \sum_{\alpha=0}^{3n-1} a_\alpha z_\alpha (x) + \tilde \varphi (x)
\end{equation}
where the functions $z_\alpha (x)$ are defined in \equ{capitalzeta0}, \equ{capitalzetaj}, \equ{capitalzeta2} \equ{chico1}, \equ{chico2} respectively,  while the constants $a_\alpha$ are chosen so that
\begin{equation}
\label{orto}
\int u^{p-1} z_\alpha \, \tilde \varphi = 0, \quad  \alpha=0, \ldots , 3n-1 .
\end{equation}
Observe that $L(\tilde \varphi )=0$.
Our aim is to show that, if $\tilde \varphi$ is bounded, then $\tilde \varphi \equiv 0$.

\medskip
\noindent
For this purpose, recall that
$$
u (x) =
U(x) - \sum_{j=1}^k U_j (x) +\tilde  \phi (x), \quad {\mbox {with}} \quad  U(x) = \left( {2 \over 1+ |x|^2} \right)^{n-2 \over 2}
$$
and
$$
 U_j (x) = \mu_k^{-{n-2 \over 2}} U(\mu_k^{-1} (x-\xi_j )).
$$
We introduce the following functions
\begin{equation}\label{defzetapiccole}
Z_0 (x) = {n-2 \over 2} U(x) + \nabla U(x) \cdot x , \end{equation}
and
\begin{equation}\label{defzetapiccole1}
 Z_\alpha (x) = {\partial \over \partial x_\alpha} U(x), \quad {\mbox {for}} \quad \alpha = 1, \ldots , n \end{equation}
Moreover, for  any $l=1, \ldots , k$,  we define
\begin{equation}\label{defzetapiccole11}
Z_{0 l} (x) = {n-2 \over 2} U_l (x) + \nabla U_l (x) \cdot (x-\xi_l ) , \end{equation}
Observe that, as a consequence of \equ{capitalzeta0} and \equ{capitalzetaj}, we have that
$$
z_0 (x) = Z_0 (x) - \sum_{l=1}^k \left[ Z_{0 , l}  (x) + \sqrt{1-\mu^2 } \cos \theta_l  {\partial \over \partial x_1} U_l (x) \right.
$$
$$
+ \left. \sqrt{1-\mu^2} \sin \theta_l {\partial \over \partial x_2} U_l (x)  \right] + \pi_0 (x),
$$
where $\pi_0$ is defined in \equ{ang2}.
Define, for $l=1, \ldots , k$,
\begin{equation}\label{canada1}
 Z_{1 l} (x) =\sqrt{1-\mu^2} \,  \left[\cos \theta_l  {\partial \over \partial x_1} U_l (x)+ \sin \theta_l {\partial \over \partial x_2} U_l (x)  \right]\end{equation}
\begin{equation}\label{canada1}
 Z_{2 l} (x) =\sqrt{1-\mu^2} \,  \left[ -\sin  \theta_l  {\partial \over \partial x_1} U_l (x)+ \cos \theta_l {\partial \over \partial x_2} U_l (x)  \right]
\end{equation}
where
$\theta_l = {2\pi \over k} \, (l-1)$. Furthermore, for any $l=1, \ldots , k$,
\begin{equation}
\label{ccanada}  \quad Z_{\alpha , l} (x) = {\partial \over \partial x_\alpha} U(x), \quad {\mbox {for}} \quad \alpha = 3, \ldots , n.
\end{equation}
Thus, we can write
\begin{equation}
\label{ang1}
z_0 (x) = Z_0 (x) - \sum_{l=1}^k \left[ Z_{0 , l}  (x) +Z_{1,l} (x)  \right] + \pi_0 (x),
\end{equation}
\begin{eqnarray}
\label{ang111}
z_1 (x) &=& Z_1 (x) - \sum_{l=1}^k  {\partial \over \partial x_1} U_l (x) + \pi_1 (x) \\
&=& Z_1 (x) - \sum_{l=1}^k  {  [ \cos \theta_l Z_{1l} (x) -\sin \theta_l Z_{2,l} (x) ] \over \sqrt{1-\mu^2}} + \pi_1 (x)  \nonumber
\end{eqnarray}
\begin{eqnarray}
\label{ang222}
z_2 (x) &=& Z_2 (x) - \sum_{l=1}^k  {\partial \over \partial x_2} U_2 (x) + \pi_2 (x) \\
&=& Z_2 (x) - \sum_{l=1}^k  {  [ \sin \theta_l Z_{1l} (x) +\cos \theta_l Z_{2,l} (x) ] \over \sqrt{1-\mu^2}} + \pi_2 (x)  \nonumber
\end{eqnarray}
and, for $\alpha =3, \ldots , n$,
\begin{equation}
\label{ang333}
z_\alpha (x) = Z_\alpha (x) - \sum_{l=1}^k Z_{\alpha , l} +\pi_\alpha (x)
\end{equation}
Furthermore
\begin{equation}
\label{canada2}
z_{n+1} (x) = \sum_{l=1}^k Z_{2 l} (x)  + x_2 \pi_1 (x) - x_1 \pi_2 (x)
\end{equation}
\begin{eqnarray}
\label{canada3}
z_{n+2} (x) = \sum_{l=1}^k \, \sqrt{1-\mu^2} \, & &  \cos \theta_l Z_{0 l} (x)  - \sum_{l=1}^k  \, \sqrt{1-\mu^2} \,  \cos \theta_l Z_{1 l} (x) \nonumber \\
& & -2x_1 \pi_0 (x) + |x|^2 \pi_1 (x)
\end{eqnarray}
\begin{eqnarray}
\label{canada3}
z_{n+3} (x) =  \sum_{l=1}^k \, \sqrt{1-\mu^2} \,  \sin \theta_l Z_{0 l} (x) & - & \sum_{l=1}^k  \, \sqrt{1-\mu^2} \,  \sin \theta_l Z_{1 l} (x) \nonumber \\
& - & 2x_2 \pi_0 (x) + |x|^2 \pi_2 (x)
\end{eqnarray}
and, for $\alpha =3, \ldots , n$,
\begin{equation}
\label{canada4}
z_{n+\alpha +1} (x) =  \sqrt{1-\mu^2}\,  \sum_{l=1}^k  \cos \theta_l Z_{\alpha l} (x) + x_1 \pi_\alpha (x)
\end{equation}
\begin{equation}
\label{canada5}
z_{2n+\alpha -1} (x) =  \sqrt{1-\mu^2} \,  \sum_{l=1}^k \sin \theta_l Z_{\alpha l} (x) + x_2 \pi_\alpha (x) .
\end{equation}
Let
\begin{equation}
\label{Zalpha0}
Z_{\alpha 0} (x) = Z_\alpha (x) + \pi_\alpha (x)  , \quad \alpha = 0, \ldots , n,
\end{equation}
and introduce the $(k+1)$-dimensional vector functions
$$
\Pi_\alpha (x) =\left[ \begin{array}{r} Z_{\alpha , 0} (x)  \\  Z_{\alpha 1} (x) \\
 Z_{\alpha 2} (x) \\.. \\  Z_{\alpha k} (x)
\end{array}
\right] \quad {\mbox {for}} \quad \alpha = 0, 1 , \ldots , n.
$$
For  a given real vector $\bar c = \left[ \begin{array}{r} c_0 \\  c_1 \\
 c_2 \\.. \\  c_k
\end{array}
\right] \in \R^{k+1}$, we write
$$
\bar c \cdot \Pi_\alpha  (x) =  \sum_{l=0}^{k} c_l Z_{\alpha l} (x).
$$
With this in mind, we write our function $\tilde \varphi $ as
\begin{equation}
\label{scritturauno}
\tilde \varphi  ( x) = \sum_{\alpha =0}^n \textsf{c}_\alpha \cdot \Pi_\alpha (x) +  \varphi^\perp (x)
\end{equation}
where $ \textsf{c}_\alpha = \left[ \begin{array}{r} c_{\alpha 0} \\
c_{\alpha 1} \\  \ldots \\
c_{\alpha k} \end{array} \right]$, $\alpha = 0 , 1 , \ldots , n$, are $(n+1)$ vectors in $\R^{k+1}$ defined  so that
$$
\int U_l^{p-1} (x) Z_{\alpha l} (x) \varphi^{\perp} (x) \, dx= 0,  \quad {\mbox {for all}} \quad l=0, 1, \ldots , k, \quad \alpha = 0, \ldots , n.
$$
Observe that
\begin{equation}
\label{conclusion}
 \textsf{c}_\alpha = 0 \quad {\mbox {for all}} \quad \alpha \quad {\mbox {and}} \quad \varphi^\perp \equiv 0 \Longrightarrow \tilde \varphi  \equiv 0 .
\end{equation}
Hence, our purpose is to show that all vector $\textsf{c}_\alpha$ are zero vectors and that $\varphi^\perp \equiv 0$. This will be consequence of the following three facts.

\medskip
\bigskip
\noindent
{\bf Fact 1.}
The orthogonality conditions \equ{orto} take the form
\begin{equation}
\label{sun1}
\sum_{\alpha = 0 }^n \textsf{c}_\alpha \cdot \int \Pi_\alpha u^{p-1} z_\beta = \sum_{\alpha=0}^n \sum_{l=0}^k c_{\alpha l} \int Z_{\alpha l } u^{p-1} z_\beta = -\int \varphi^\perp u^{p-1} z_\beta
\end{equation}
for $\beta = 0 , \ldots , 3n- 1$. Equation \equ{sun1} is a system of $(n+2)$ linear equations ($\beta = 0 , \ldots , 3n-1$) in the $(n+1) \times (k+1) $ variables  $c_{\alpha l }$.

\medskip
Let us introduce the following three vectors in $\R^k $
\begin{equation}\label{igna1}
 \bf{1}_k = \left[ \begin{array}{r} 1 \\ 1 \\ \ldots \\ 1 \end{array} \right]  , \quad
\bf{cos} = \left[  \begin{array}{r} 1 \\ \cos \theta_2 \\ \ldots \\ \cos \theta_{k-1} \end{array} \right] , \quad
\bf{sin} = \left[  \begin{array}{r} 0 \\ \sin \theta_2 \\ \ldots \\ \sin \theta_{k-1} \end{array} \right] .
\end{equation}
Let us write
$$
\textsf{c}_\alpha = \left[ \begin{array}{r} c_{\alpha , 0} \\  \bar {\textsf{c}}_\alpha \end{array} \right],
 \quad {\mbox {with}} \quad c_{\alpha , 0} \in \R, \, \bar {\textsf{c}}_\alpha  \in \R^k, \quad \alpha = 0,1 , \ldots , n,
$$
and
$$
\bar{ \textsf{c}} = \left[ \begin{array}{r}  \bar {\textsf{c}}_{0} \\ ..\\ \bar {\textsf{c}}_n \end{array} \right]\in \R^{n (k+1)},
\quad \hat{\textsf{c}} =  \left[ \begin{array}{r}   c_{0,0} \\ ..\\ c_{n,0} \end{array} \right] \in \R^{n+1}
$$
\medskip
We have the validity of the following
\begin{proposition}\label{danilo}
The system \equ{sun1} reduces to the following $3n$ linear conditions of the vectors $\textsf{c}_\alpha$:
\begin{equation} \label{len1}
 \textsf{c}_0 \cdot \left[ \begin{array}{r} 1 \\ -  \bf{1}_k \end{array} \right]
+  \textsf{c}_1 \cdot \left[ \begin{array}{r} 0 \\ -  \bf{1}_k \end{array} \right]
 = t_0   +O( k^{-{n\over q}} ) {\mathcal L}_0 ( \bar {\textsf{c}})  +O( k^{1-{n\over q}} ) {\hat {\mathcal L}}_0 ( \hat {\textsf{c}}) ,
\end{equation}
\begin{equation}
\label{len2}
 \textsf{c}_1 \cdot \left[ \begin{array}{r} 1 \\ -  \bf{cos } \end{array} \right]
+  \textsf{c}_2 \cdot \left[ \begin{array}{r} 0 \\     \bf{sin } \end{array} \right]
 =  t_1    +O( k^{-{n\over q}} ) {\mathcal L}_1 ( \bar {\textsf{c}})  +O( k^{1-{n\over q}} ) {\hat {\mathcal L}}_1 ( \hat {\textsf{c}}) ,
\end{equation}
\begin{equation}
\label{len3}
 \textsf{c}_1 \cdot \left[ \begin{array}{r} 0 \\ -  \bf{sin } \end{array} \right]
+  \textsf{c}_2 \cdot \left[ \begin{array}{r} 1 \\  -  \bf{cos } \end{array} \right]
 = t_2    +O( k^{-{n\over q}} ) {\mathcal L}_2 ( \bar {\textsf{c}})  +O( k^{1-{n\over q}} ) {\hat {\mathcal L}}_2 ( \hat {\textsf{c}}) ,
\end{equation}
for $\alpha = 3, \ldots , n$
\begin{equation}
\label{len4}
 \textsf{c}_\alpha \cdot \left[ \begin{array}{r} 1 \\ -  \bf{1 }_k \end{array} \right]
 = t_\alpha   +O( k^{-{n\over q}} ) {\mathcal L}_\alpha ( \bar {\textsf{c}})  +O( k^{1-{n\over q}} ) {\hat {\mathcal L}}_\alpha ( \hat {\textsf{c}}) ,
\end{equation}
\begin{equation}
\label{len50}  \textsf{c}_2 \cdot \left[ \begin{array}{r} 0 \\  \bf{1 }_k \end{array} \right]
 = t_{n+1}    +O( k^{-{n\over q}} ) {\mathcal L}_{n+1} ( \bar {\textsf{c}})  +O( k^{1-{n\over q}} ) {\hat {\mathcal L}}_{n+1} ( \hat {\textsf{c}}) ,
\end{equation}
\begin{equation}
\label{len5}
 \textsf{c}_0 \cdot \left[ \begin{array}{r} 0 \\  \bf{cos }\end{array} \right] - \textsf{c}_1 \cdot \left[ \begin{array}{r} 0 \\  \bf{cos }\end{array} \right]
 =  t_{n+2}    +O( k^{-{n\over q}} ) {\mathcal L}_{n+2} ( \bar {\textsf{c}})  +O( k^{1-{n\over q}} ) {\hat {\mathcal L}}_{n+2} ( \hat {\textsf{c}}) ,
\end{equation}
\begin{equation}
\label{len6} \textsf{c}_0 \cdot \left[ \begin{array}{r} 0 \\  \bf{sin }\end{array} \right] - \textsf{c}_1 \cdot \left[ \begin{array}{r} 0 \\  \bf{sin }\end{array} \right]
 =  t_{n+3}    +O( k^{-{n\over q}} ) {\mathcal L}_{n+3} ( \bar {\textsf{c}})  +O( k^{1-{n\over q}} ) {\hat {\mathcal L}}_{n+3} ( \hat {\textsf{c}}) ,
\end{equation}
for $\alpha = 3, \ldots , n$,
\begin{equation}
\label{len7}
 \textsf{c}_\alpha \cdot \left[ \begin{array}{r} 0 \\  \bf{cos } \end{array} \right]
 =  t_{n+\alpha +1}    +O( k^{-{n\over q}} ) {\mathcal L}_{n+\alpha +1} ( \bar {\textsf{c}})  +O( k^{1-{n\over q}} ) {\hat {\mathcal L}}_{n+\alpha +1} ( \hat {\textsf{c}}) ,
\end{equation}
\begin{equation}
\label{len8}
 \textsf{c}_\alpha \cdot \left[ \begin{array}{r} 0 \\  \bf{sin } \end{array} \right]
 =  t_{2n+\alpha -1}    +O( k^{-{n\over q}} ) {\mathcal L}_{2n+\alpha -1} ( \bar {\textsf{c}})  +O( k^{1-{n\over q}} ) {\hat {\mathcal L}}_{2n +\alpha -1} ( \hat {\textsf{c}}) ,
\end{equation}
In the above expansions, $ \left[ \begin{array}{r} \textsf{t}_0 \\
\textsf{t}_1 \\
\ldots \\
\textsf{t}_n \end{array} \right] $ is a fixed vector with
$$
\|  \left[ \begin{array}{r} \textsf{t}_0 \\
\textsf{t}_1 \\
\ldots \\
\textsf{t}_n \end{array} \right]  \| \leq C \| \varphi^{\perp} \|_*
$$
and ${\mathcal L}_j : \R^{k(n+1) } \to \R^{3n}$, $\hat {{\mathcal L}}_j : \R^{n } \to \R^{3n}$ are linear functions, whose coefficients are constants
uniformly bounded as $k \to \infty$. The number $q$, with $ \frac n2 < q < n$, is the one already fixed in \equ{nomstar2}.
Furthermore, $O(1)$ denotes a quantity which is uniformly bounded as $k \to \infty$.

\end{proposition}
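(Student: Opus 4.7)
I would substitute the decomposition \equ{scritturauno} of $\tilde\varphi$ directly into the orthogonality conditions \equ{sun1}, yielding, for each $\beta=0,\dots,3n-1$,
$$
\sum_{\alpha=0}^n \sum_{l=0}^k c_{\alpha l}\,\int Z_{\alpha l}\,u^{p-1}\,z_\beta \;=\; -\int u^{p-1}\,z_\beta\,\varphi^\perp.
$$
The right-hand side is immediately bounded by $C\|\varphi^\perp\|_*$, giving the vectors $\textsf t_\beta$. The entire proof then reduces to the asymptotic analysis of the $(3n)\times(n+1)(k+1)$ coupling matrix $M_{\beta,(\alpha l)}=\int Z_{\alpha l}\,u^{p-1}\,z_\beta$, and to showing that its dominant part is exactly what the right-hand sides of \equ{len1}--\equ{len8} assert.

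\textbf{Computation of the dominant terms.} Using the expansions \equ{ang1}--\equ{canada5} I would write each $z_\beta$ as a combination of a central piece in $\{Z_0,\dots,Z_n\}$, pieces $\{Z_{\alpha,l}\}_{l=1}^k$ weighted by $1$, $\cos\theta_l$, or $\sin\theta_l$, plus a remainder made of the $\pi_\alpha$'s. Writing $u^{p-1}=U^{p-1}+O(\text{small})$ on the exterior region and $u^{p-1}=U_l^{p-1}+O(\text{small})$ in each interior ball $|x-\xi_l|<\eta/k$, I obtain the main terms via the classical identities
$$
\int U^{p-1} Z_\alpha Z_\beta = C_\alpha\,\delta_{\alpha\beta}, \qquad \int U_l^{p-1} Z_{\alpha l} Z_{\beta l}=C_\alpha\,\delta_{\alpha\beta},
$$
the second following from the first by scaling and translation. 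This instantly produces the diagonal pairings: for instance, pairing with $z_0$ (which contains $Z_0$ at the origin with coefficient $+1$ and $-Z_{0,l}-Z_{1,l}$ at each bubble) yields $\textsf c_0\cdot[1,-\bf 1_k]+\textsf c_1\cdot[0,-\bf 1_k]$, exactly as in \equ{len1}. The analogous structure for $\beta=n+2$ arises from the factor $\sqrt{1-\mu^2}\cos\theta_l$ in \equ{canada3}, producing the $\bf{cos}$ vector, and similarly $\bf{sin}$ for $\beta=n+3$; the conditions \equ{len4},\equ{len7},\equ{len8} are obtained by the same mechanism for $\alpha\ge 3$.

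\textbf{Error estimates.} All remaining contributions must be absorbed into the linear forms $\mathcal L_j(\bar{\textsf c})$ and $\hat{\mathcal L}_j(\hat{\textsf c})$ with the claimed prefactors. These come from: (i) the pieces $\pi_\alpha$ in $z_\beta$, controlled by Proposition \ref{prop1}; (ii) the cross-bubble couplings $\int U_i^{p-1} Z_{\alpha i} Z_{\beta j}$ for $i\ne j$, which by standard bubble-interaction estimates are of order $(\mu/|\xi_i-\xi_j|)^{n-2}$ and sum to at most $O(\mu^{n-2})=O(k^{-(n-2)})$, well inside the tolerated error; (iii) the contribution of $u^{p-1}-U^{p-1}-\sum_l U_l^{p-1}$ estimated against the norm $\|\cdot\|_{**}$ using \equ{rivoli4}, \equ{estpsi}, \equ{estphi1}. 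The difference between the two prefactors $k^{-n/q}$ (for the $k$-vector entries $\bar{\textsf c}_\alpha$) and $k^{1-n/q}$ (for the scalar entries $\hat{\textsf c}_\alpha$) is dictated by the fact that the center bubble $Z_{\alpha,0}$ has support essentially in $|x|\le 1$, while each $Z_{\alpha,l}$ is concentrated in a ball of radius $\mu\sim k^{-2}$ around $\xi_l$, so changing variables gains an extra power of $k$ when testing against a fixed function.

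\textbf{Main obstacle.} The principal difficulty is not the size bookkeeping per se but seeing that the matrix of dominant couplings has the specific block form in \equ{len1}--\equ{len8}, with zeros in several off-diagonal entries. Those zeros come from two ingredients that must be used in tandem: the parity of the generators $Z_\alpha,\;Z_{\alpha,l}$ in each variable (so that, e.g., $\int Z_{1,l}U_l^{p-1}Z_{2,l}=0$), and the discrete orthogonality relations of $\bf 1_k$, $\bf{cos}$, $\bf{sin}$ viewed as columns indexed by the equispaced angles $\theta_l=2\pi(l-1)/k$. Getting these cancellations simultaneously with the weighted bounds on the $\pi_\alpha$'s requires a careful split of the domain of integration into the $k$ interior regions and the exterior, and a term-by-term identification of which trigonometric column each integral contributes to; this is the routine but delicate calculation that occupies the bulk of the proof.
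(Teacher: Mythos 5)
Your strategy is the same as the paper's: substitute the decomposition \equ{scritturauno} into \equ{sun1}, identify the dominant couplings $\int Z_{\alpha l}\,u^{p-1}z_\beta$ by localizing $u^{p-1}$ near the origin and near each $\xi_l$, and absorb the contributions of the $\pi_\alpha$'s (via Proposition \ref{prop1}), the cross-bubble interactions, and the discrepancy $|u|^{p-1}-U^{p-1}$ into the linear forms ${\mathcal L}_j$, $\hat{\mathcal L}_j$ with the stated prefactors. The block structure via parity and the discrete orthogonality of $\bf{1}_k$, $\bf{cos}$, $\bf{sin}$ is also exactly what the paper exploits.

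There is, however, one concrete ingredient you elide that the stated form of \equ{len1}--\equ{len3} actually requires. Your normalization $\int U^{p-1}Z_\alpha Z_\beta = C_\alpha\,\delta_{\alpha\beta}$ leaves open that $C_0\neq C_1$. Pairing against $z_0=Z_{0,0}-\sum_l\left[Z_{0,l}+Z_{1,l}\right]$ then yields, after dividing by $C_0$, the combination $\textsf{c}_0\cdot\left[1,-\bf{1}_k\right]+(C_1/C_0)\,\textsf{c}_1\cdot\left[0,-\bf{1}_k\right]$, and similarly a ratio $C_1/C_0$ appears in \equ{len2}--\equ{len3} through the $Z_{0,l}$ versus $Z_{1,l}$, $Z_{2,l}$ blocks. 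To obtain the unit coefficients claimed in the Proposition one needs the identity $\int U^{p-1}Z_0^2=\int U^{p-1}Z_1^2$, which is not a consequence of symmetry (rotational invariance only gives $\int U^{p-1}Z_j^2$ independent of $j\ge1$, not its equality with the dilation direction $Z_0$); the paper proves it by an explicit Gamma-function computation, identity \equ{assurdo}. Without it your argument proves a correct but differently normalized set of $3n$ conditions; with it, your proof coincides with the paper's. You should either supply this computation or track the ratio $C_1/C_0$ explicitly through the subsequent nondegeneracy argument.
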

\medskip
We shall prove \equ{len1}--\equ{len8} in Section \ref{proof1}.

\medskip
\bigskip
\noindent
{\bf Fact 2.}
Since
$
L(\tilde \varphi ) = 0$, we have that
\begin{equation}
\label{due}
  \sum_{\alpha =0}^n  \textsf{c}_\alpha \cdot  L ( \Pi_\alpha (x) )= \sum_{\alpha = 0}^n \sum_{l=0}^k c_{\alpha l } L(Z_{\alpha , l} ) = - L(  \varphi^\perp  )
\end{equation}
Let $\varphi^\perp = \varphi_0^\perp + \sum_{l=1}^k \varphi_l^\perp$ where
$$
- L(  \varphi_0^\perp  ) =  \sum_{\alpha = 0}^n  c_{\alpha 0} L(Z_{\alpha , 0}  )
$$
and for any $l=1, \ldots , k$
$$
- L(  \varphi_l^\perp  ) =  \sum_{\alpha = 0}^n  c_{\alpha l } L(Z_{\alpha , l} ).
$$
Furthermore, let
$$
\tilde \varphi_l^{\perp}  (y) = \mu^{n-2 \over 2} \varphi_l^\perp ( \mu y + \xi_l ),
$$
and define
\begin{equation}
\label{defno1}
\| \varphi^\perp \|_* = \| \varphi^\perp \|_{n-2} + \sum_{l=1}^k \| \tilde \varphi_l^\perp \|_{n-2}
\end{equation}
where the $\| \cdot \|_{n-2}$ is defined in \equ{nomstar1}.
A first consequence of \equ{due} is that there exists a positive constant $C$ such that
\begin{equation}
\label{fattouno}
\| \varphi^\perp \|_* \leq C \mu^{1\over 2}  \sum_{\alpha =0}^n \, \|{\textsf{c}}_{\alpha }\|
\end{equation}
for all $k$ large.
We postpone the proof of \equ{fattouno} to Section \ref{proof2}.

\medskip
\bigskip
\noindent
{\bf Fact 3.}
Let us now multiply \equ{due} against $Z_{\beta l}$, for $\beta = 0, \ldots , n$ and $l=0, 1, \ldots , k$. After integrating in $\R^n$ we get a linear system of $(n+1) \times (k+1) $ equations
in the $(n+1) \times (k+1)$ constants $c_{\alpha j}$ of the form
\begin{equation}
\label{sistem1}
M  \left[ \begin{array}{r}  \textsf{c}_0 \\
 \textsf{c}_1\\.. \\  \textsf{c}_n
\end{array}
\right] = - \left[ \begin{array}{r}  \textsf{r}_0 \\
 \textsf{r}_1\\.. \\  \textsf{r}_n
\end{array}
\right], \quad {\mbox {with}} \quad \textsf{r}_\alpha =
 \left[ \begin{array}{r}  \int_{\R^n}  L(\varphi^\perp)  Z_{\alpha , 0}\\
  \int_{\R^n}  L(\varphi^\perp) Z_{\alpha , 1} \\.. \\   \int_{\R^n}  L(\varphi^\perp) Z_{\alpha , k}
\end{array}
\right]
\end{equation}
Observe first that relation  \equ{ang1} together with the fact that $L(z_\alpha )=0$ for all $\alpha =0, \ldots , n$, allow us to say that the vectors $\textsf{r}_\alpha$ have the form
\begin{equation}\label{decr0}
{\mbox {row}}_1 \left( \textsf{r}_0 \right)= \sum_{l=2}^{k+1} \left[  {\mbox {row}}_l ( \textsf{r}_0 )+ {\mbox {row}}_l ( \textsf{r}_1 ) \right]
\end{equation}
\begin{equation}\label{decr00}
{\mbox {row}}_1 \left( \textsf{r}_1 \right)= {1\over \sqrt{1-\mu^2}}\,  \sum_{l=2}^{k+1} \left[ \cos \theta_l  {\mbox {row}}_l   ( \textsf{r}_1  ) - \sin \theta_l  {\mbox {row}}_l ( \textsf{r}_2 ) \right], \quad
\end{equation}
\begin{equation}\label{decr000}
 {\mbox {row}}_1 \left( \textsf{r}_2 \right)= {1\over \sqrt{1-\mu^2}}\,  \sum_{l=2}^{k+1}\left[ \sin \theta_l  {\mbox {row}}_l (  \textsf{r}_1 )  + \cos \theta_l  {\mbox {row}}_l ( \textsf{r}_2 ) \right]
\end{equation}
\begin{equation}\label{decr0000}
{\mbox {row}}_1 \left( \textsf{r}_\alpha \right)= \sum_{l=2}^{k+1} {\mbox {row}}_l \left( \textsf{r}_\alpha \right) \quad {\mbox {for all}} \quad \alpha = 3, \ldots , n.
\end{equation}
Here with $ {\mbox {row}}_l $ we denote the $l$-th row.

The matrix $M$ in \equ{sistem1} is a square matrix of dimension $[ (n+1) \times (k+1) ]^2$. The entries of $M$ are numbers of the form
$$
\int_{\R^n} L(Z_{\alpha l} ) Z_{\beta j} \, dy
$$
for $\alpha$, $\beta = 0, \ldots , n$ and $l , j = 0, 1, \ldots , k$.

A first observation is that, if $\alpha $ is any of the indeces $\{ 0 , 1 , 2 \}$, and $\beta $ is any of the index in $\{ 3, \ldots , n \}$, then by symmetry the above integrals are zero, namely
$$
\int_{\R^n} L(Z_{\alpha l} ) Z_{\beta j} \, dy = 0 \quad {\mbox {for any}} \quad l , j =0, \ldots , k
$$
 This fact implies that  the  matrix $M$ has the form
\begin{equation}\label{matM}
M= \left[ \begin{array}{rr}  M_1 & 0\\
 0& M_2
\end{array}
\right]
\end{equation}
where $M_1$ is a square matrix of dimension $(3\times (k+1))^2$ and $M_2$ is a square matrix of dimension $[(n-2) \times (k+1)]^2$.

Since
$$
\int_{\R^n} L(Z_{\alpha l} ) Z_{\beta j} \, dy = \int_{\R^n} L(Z_{\beta j} ) Z_{\alpha l} \, dy
$$
for $\alpha$, $\beta = 0, \ldots , n$ and $l , j = 0, 1, \ldots , k$,
we can write
\begin{equation}\label{matM1}
M_1 = \left[ \begin{array}{rrr}  \bar A & \bar B & \bar C  \\
{\bar  B}^T & \bar F & \bar D  \\
 {\bar C }^T  & {\bar D }^T & \bar G
\end{array}
\right]
\end{equation}
where $\bar A$,  $\bar B$,  $\bar C$,  $\bar D$,  $\bar F$ and  $\bar G$ are square matrices of dimension $(k+1)^2$, with $\bar A$, $\bar F$ and $\bar G$  symmetric.
More precisely,
\begin{equation}
\label{AF0}
\bar A=  \left( \int L(Z_{0i} ) Z_{0j} \right)_{i,j=0,1,\ldots , k} , \bar  F =  \left( \int L(Z_{1i} ) Z_{1j} \right)_{i,j=0,1,\ldots , k} ,
\end{equation}
\begin{equation}
\label{GB0} \bar G=  \left( \int L(Z_{2i} ) Z_{2j} \right)_{i,j=0,1,\ldots , k}  ,\bar  B =  \left( \int L(Z_{0i} ) Z_{1j} \right)_{i,j=0,1,\ldots , k} ,
\end{equation}
and
\begin{equation}
\label{CD0}
  \bar C=  \left( \int L(Z_{0i} ) Z_{2j} \right)_{i,j=0,1,\ldots , k},\bar  D=  \left( \int L(Z_{1i} ) Z_{2j} \right)_{i,j=0,1,\ldots , k}
\end{equation}
Furthermore, again by symmetry, since
$$
\int L(Z_{\alpha i }) Z_{\beta j} \, dx = 0, \quad {\mbox {if}} \quad \alpha \not= \beta, \quad \alpha , \beta =3, \ldots , n
$$
the matrix $M_2$ has the form
\begin{equation}
\label{matM2}
M_2 = \left[ \begin{array}{rrrrr}  \bar H_3 & 0 & 0 & 0 & 0 \\
 0& \bar H_4 & 0 & 0 & 0 \\
 .. & .. & .. & ..& .. \\
 0 & 0 & 0 &\bar  H_{n-1}& 0 \\
 0 & 0 & 0 & 0 & \bar H_n
\end{array}
\right]
\end{equation}
where $\bar H_j$ are square matrices of dimension $(k+1)^2$, and each of them is symmetric. The matrices $\bar H_\alpha$ are defined by
\begin{equation}
\label{H0}
\bar H_\alpha=   \left( \int L(Z_{\alpha i} ) Z_{\alpha j} \right)_{i,j=0,1,\ldots , k}, \quad \alpha = 3, \ldots , n.
\end{equation}

Thus, given the form of the matrix $M$ as described in \equ{matM}, \equ{matM1} and \equ{matM2}, system \equ{sistem1} is equivalent to
\begin{equation}
\label{sistem2}
M_1  \left[ \begin{array}{r}  \textsf{c}_0 \\
 \textsf{c}_1\\ \textsf{c}_2
\end{array} \right] =   \left[ \begin{array}{r}  \textsf{r}_0 \\
 \textsf{r}_1\\ \textsf{r}_2
\end{array} \right], \quad \bar H_\alpha \textsf{c}_\alpha = \textsf{r}_\alpha \quad {\mbox {for}} \quad \alpha = 3, \ldots , n,
\end{equation}
where the vectors $\textsf{r}_\alpha$ are defined in \equ{sistem2}.

\medskip
\noindent
 Observe that system \equ{sistem2} impose $(n+1 ) \times (k+1)$ linear conditions on the $(n+1)\times (k+1)$ constants $c_{\alpha j}$. We shall show that $3n$ equations in \equ{sistem2} are linearly dependent. Thus in reality system \equ{sistem2} reduce to only  $(n+1 ) \times (k+1)-3n $ linearly independent conditions on the $(n+1)\times (k+1)$ constants $c_{\alpha j}$. We shall also show that system \equ{sistem2} is solvable. Indeed we have the validity of the following
\begin{proposition}\label{danilo1} There exist $k_0 $ and $C$ such that, for all $k>k_0$ System \equ{sistem2} is solvable. Furthermore, the solution has the form
$$
 \left[ \begin{array}{r}  \textsf{c}_0 \\
 \textsf{c}_1\\ \textsf{c}_2
\end{array} \right]
=  \left[ \begin{array}{r}   \textsf{v}_0 \\
  \textsf{v}_1\\ \ \textsf{v}_2
\end{array} \right]
+ s_1 \left[ \begin{array}{r}  1 \\
- \bf{1}_k \\ 0 \\ - \bf{1}_k \\ 0 \\ 0
\end{array} \right]
+ s_2  \left[ \begin{array}{r} 0\\ 0\\ 1 \\
-{1\over \sqrt{1-\mu^2}} \bf{cos } \\ 0 \\ {1\over \sqrt{1-\mu^2}} \bf{sin}
\end{array} \right] + s_3\left[ \begin{array}{r}  0\\ 0\\ 0\\ -{1\over \sqrt{1-\mu^2}} \bf{sin} \\ 1 \\
-{1\over \sqrt{1-\mu^2}} \bf{cos}
\end{array} \right]
$$
$$
+s_4 \left[ \begin{array}{r}  0 \\
0 \\ 0 \\ \\ 0 \\ \bf{1}_k
\end{array} \right] + s_5 \left[ \begin{array}{r}  0 \\
 \bf{cos} \\ 0 \\ - \bf{cos} \\ 0 \\ 0
\end{array} \right] + s_6 \left[ \begin{array}{r}  0 \\
 \bf{sin} \\ 0 \\ - \bf{sin} \\ 0 \\ 0
\end{array} \right]
$$
and
$$
\textsf{c}_\alpha = \textsf{v}_\alpha + s_{\alpha 1} \left[ \begin{array}{r}  1 \\
- \bf{1}_k
\end{array} \right] + s_{\alpha 2} \left[ \begin{array}{r}  0 \\
\bf{cos}
\end{array} \right] + s_{\alpha 3} \left[ \begin{array}{r}  0 \\
 \bf{sin}
\end{array} \right] , \quad \alpha = 3, \ldots ,n
$$
for any $s_1 , \ldots , s_6, s_{\alpha 1}, s_{\alpha 2} , s_{\alpha 3} \in \R$, where the vectors $\textbf{v}_\alpha$ are fixed vectors with
$$
\| \textsf{v}_\alpha  \| \leq C \| \varphi^{\perp}  \|, \quad \alpha = 0, 1,  \ldots , n.
$$
\end{proposition}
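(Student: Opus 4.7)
The plan is to apply a Fredholm alternative to the block system \equ{sistem2}. The strategy has three components: identify $\ker M$ explicitly as the $3n$-dimensional space spanned by the vectors listed in the proposition; verify that the right-hand side $\textsf{r}$ lies in the range of $M$ thanks to the compatibility relations \equ{decr0}--\equ{decr0000}; and produce a particular solution $\textsf{v}_\alpha$ whose norm is $O(\|\varphi^\perp\|_*)$.

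To locate $\ker M$, I note that the decompositions \equ{ang1}--\equ{canada5}, combined with the definition $Z_{\alpha 0}=Z_\alpha+\pi_\alpha$ in \equ{Zalpha0}, express each $z_\alpha$ (for $\alpha=0,\ldots,3n-1$) as an \emph{exact} linear combination of the $Z_{\beta,l}$. Hence $L(z_\alpha)=0$ translates directly into an element of $\ker M$: $z_0$ gives the $s_1$-direction, $z_1$ and $z_2$ give the $s_2,s_3$-directions (with the $1/\sqrt{1-\mu^2}$ normalizations coming from \equ{ang111}--\equ{ang222}), $z_{n+1}$ gives $s_4$, and $z_{n+2},z_{n+3}$ give the Kelvin directions $s_5,s_6$. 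For each $\alpha=3,\ldots,n$ the triple $z_\alpha,z_{n+\alpha+1},z_{2n+\alpha-1}$ yields the three kernel directions of $\bar H_\alpha$. Counting produces $6+3(n-2)=3n$ exact, linearly independent kernel vectors.

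The key analytic step is showing that $\ker M$ has dimension exactly $3n$, i.e.\ that $M_1$ and each $\bar H_\alpha$ are invertible on the orthogonal complement of the explicit kernel, with lower bounds on the non-zero eigenvalues uniform in $k$. For the $(k+1)\times(k+1)$ sub-blocks indexed by $i,j\geq 1$, the dominant contribution to $\int L(Z_{\alpha i})Z_{\beta j}$ with $i\neq j$ comes from $\int f'(U_i)Z_{\alpha i}Z_{\beta j}$, which scales like $\mu^{n-2}|\xi_i-\xi_j|^{-(n-2)}$ and, because $\xi_l=\sqrt{1-\mu^2}(\textsf{n}_l,0)$, depends on $(i,j)$ only through $i-j\pmod k$. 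The sub-blocks are therefore circulant at leading order, and diagonalized by the discrete Fourier transform. The resulting Fourier symbols are built from the truncated series $P_n(\theta_m)$, $Q_{n-1}(\theta_m)$, $P_{n-2}(\theta_m)$ with $\theta_m=2\pi m/k$, and the determinant governing invertibility on the mode $m$ reduces (after a careful rank-one bookkeeping of the $i=0$ row/column, which couples to the atoms only at order $\mu^{(n-2)/2}$) to a $2\times 2$ expression in $g,g',g''$ evaluated at $\theta_m$. Condition \equ{condig} is precisely what forces this determinant to be bounded away from zero uniformly in $m\not\in\{0,\pm 1\}$; the modes $m=0,\pm 1$ correspond exactly to the kernel vectors already identified.

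With invertibility modulo the explicit kernel established, solvability of \equ{sistem2} reduces to checking that $\textsf{r}$ is orthogonal to $\ker M$. Since $L$ is self-adjoint, pairing $-L(\varphi^\perp)$ against each kernel vector and using the identity $\sum c_{\alpha l}Z_{\alpha l}=z_\gamma$ gives $\int \varphi^\perp L(z_\gamma)=0$, which is exactly the content of \equ{decr0}--\equ{decr0000}. A particular solution $\textsf{v}_\alpha$ chosen in the orthogonal complement of $\ker M$ then satisfies $\|\textsf{v}_\alpha\|\leq C\|\textsf{r}\|\leq C\|\varphi^\perp\|_*$, using H\"older's inequality and the pointwise decay of $Z_{\alpha l}$ to bound $|\int L(\varphi^\perp)Z_{\alpha l}|$. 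The main obstacle is the careful Fourier analysis of the circulant sub-blocks and the verification that all non-leading contributions (three-atom interactions, the central $i=0$ coupling, and remainders coming from $u-U_i$) are of strictly smaller order, so that the strict inequality in \equ{condig} translates into a uniform lower bound on the smallest non-zero Fourier symbol of $M$.
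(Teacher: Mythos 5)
Your overall strategy coincides with the paper's: identify the $3n$-dimensional kernel of $M$, check that the right-hand side is orthogonal to it, and invert on the orthogonal complement using the circulant/Fourier structure together with condition \equ{condig}. However, there is a genuine gap in your treatment of the compatibility conditions, and it traces back to an incorrect claim about the kernel. You assert that the decompositions \equ{ang1}--\equ{canada5} express every $z_\alpha$, $\alpha=0,\ldots,3n-1$, as an \emph{exact} linear combination of the $Z_{\beta,l}$. This is true only for $\alpha=0,\ldots,n$, where the remainder $\pi_\alpha$ is absorbed into $Z_{\alpha 0}=Z_\alpha+\pi_\alpha$. For the rotation and Kelvin directions, \equ{canada2}--\equ{canada5} carry remainders such as $x_2\pi_1-x_1\pi_2$ and $-2x_1\pi_0+|x|^2\pi_1$, which are not linear combinations of the $Z_{\beta,l}$. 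Consequently the identities $L(z_{n+1})=0,\ldots,L(z_{3n-1})=0$ do not directly produce kernel vectors of $M$; the corresponding $2n-1$ kernel directions (those multiplying $s_4,s_5,s_6$ and $s_{\alpha 2},s_{\alpha 3}$) are obtained in the paper from the exact vanishing of the relevant circulant eigenvalues (the modes $m=0,\pm 1$ in Proposition \ref{invsist}), not from the invariances applied as you describe.

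The same issue infects your verification of solvability. Pairing $-L(\varphi^\perp)$ against the kernel and invoking \equ{decr0}--\equ{decr0000} yields only $n+1$ of the $3n$ required orthogonality conditions, namely those associated with ${\bf{w}_0},{\bf{w}_1},{\bf{w}_2}$ and with the vector $\left[\begin{array}{r}1\\ -{\bf 1}_k\end{array}\right]$ for each $\bar H_\alpha$. The remaining $2n-1$ conditions, i.e.\ $\bar{\textsf{r}}_2\cdot{\bf 1}_k=0$, $\bar{\textsf{r}}_\alpha\cdot{\bf{cos}}=\bar{\textsf{r}}_\alpha\cdot{\bf{sin}}=0$ for $\alpha=3,\ldots,n$, and $(\bar{\textsf{r}}_0-\bar{\textsf{r}}_1)\cdot{\bf{cos}}=(\bar{\textsf{r}}_0-\bar{\textsf{r}}_1)\cdot{\bf{sin}}=0$, do \emph{not} follow from \equ{decr0}--\equ{decr0000}. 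The paper derives the first group from the invariance of $L(\varphi^\perp)$ under rotation by $2\pi/k$ in the $(x_1,x_2)$-plane, and the last two from the Kelvin symmetry $L(\varphi^\perp)(x)=|x|^{-2-n}L(\varphi^\perp)(x/|x|^2)$ via the separate identity of Lemma \ref{mm}, which relates $\mu\int\partial_\mu U_\mu(\cdot-\xi_l)\,h$ to $\xi_l\cdot\int\nabla U_\mu(\cdot-\xi_l)\,h$ for Kelvin-symmetric $h$. This Kelvin-transform ingredient is entirely absent from your argument and cannot be recovered from self-adjointness alone; without it, solvability of the $M_1$ block is not established. Your quantitative conclusion $\|\textsf{v}_\alpha\|\leq C\|\varphi^\perp\|_*$ is fine in spirit, but it also relies on the sharper bound $\|\bar{\textsf{r}}_\alpha\|\leq C\mu^{n-2\over 2}\|\varphi^\perp\|_*$ of Proposition \ref{rhside} to compensate the factor $(k^n\mu^{n-2})^{-1}$ coming from the inversion, a point you should make explicit.
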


\medskip
\noindent
Conditions \equ{len1}--\equ{len8} guarantees that the solution $\textsf{c}_\alpha$ to \equ{sistem2} is indeed unique. Furthermore, we shall show that
 there exists a positive constant $C$ such that
\begin{equation}
\label{conse1} \sum_{\alpha =0}^n \| \textsf{c}_\alpha \| \leq C \| \varphi^\perp \|_*.
\end{equation}
Here $\| \cdot \|$ denotes the euclidean norm in $\R^k$.

\medskip
\noindent
Estimates \equ{conse1} combined with \equ{fattouno} gives that
\begin{equation}
\label{conse2}
\textsf{c}_\alpha = 0 \quad \forall \alpha = 0 , \ldots , n.
\end{equation}
Replacing equation \equ{conse2} into \equ{fattouno} we finally get \equ{conclusion}, namely
$$  \textsf{c}_\alpha = 0 \quad {\mbox {for all}} \quad \alpha \quad {\mbox {and}} \quad \varphi^\perp \equiv 0.
$$

\medskip
\noindent
{\bf Scheme of the paper}: In Section \ref{simple}  we discuss and simplify system \equ{sistem2}. In Section \ref{lineare} we establish an invertibility theory for solving \equ{sistem2}.
Section \ref{final0} is devoted to prove Proposition \ref{danilo1}.  In Section \ref{final} we prove Theorem \ref{teo}. Section \ref{proof1} is devoted to the  proof of Proposition \ref{danilo}, while Section \ref{proof2} is devoted to the proof of \equ{fattouno}. Section \ref{appe1} is devoted to the detailed proofs of several computations.

\section{A first simplification of the system \equ{sistem2}}\label{simple}

Let us consider system \equ{sistem2} and let us fix $\alpha \in \{ 3, \ldots , n\}$. Recall that the function $z_\alpha$ defined in \equ{capitalzetaj} satisfies $L(z_\alpha )=0$. Hence, by \equ{ang1}, \equ{Zalpha0} and \equ{H0} we have that
$$
{\mbox {row}}_1 (\bar H_\alpha ) = \sum_{l=2}^{k+1} {\mbox {row}}_l (\bar H_\alpha ).
$$
This implies that $  \left[ \begin{array}{r}  1 \\
- \bf{1}_k
\end{array} \right]
\in {\mbox {kernel}} (\bar H_\alpha )$ and thus that the system  $\bar H_\alpha (\textsf{c}_\alpha ) = \textsf{r}_\alpha $ is solvable only if  $ \text{r}_\alpha \cdot  \left[ \begin{array}{r}  1 \\
- \bf{1}_k
\end{array} \right] =0$. On the other hand, this last solvability condition is satisfied as consequence of \equ{decr0000}.  Thus $\bar H_\alpha \textsf{c}_\alpha = \textsf{r}_\alpha$ is solvable.

Arguing similarly, we get that
$$
{\mbox {row}}_1 (M_1 ) = \sum_{l=2}^{k+1} {\mbox {row}}_l (M_1 ) + \sum_{l=k+3}^{2k+2} {\mbox {row}}_l (M_1 ) , \quad
$$
$$
{\mbox {row}}_{k+2} (M_1 ) ={1\over \sqrt{1-\mu^2}} \left[  \sum_{l=1}^{k} \cos \theta_l  {\mbox {row}}_{k+2+l}  (M_1 ) -  \sum_{l=1}^{k} \sin \theta_l {\mbox {row}}_{2k+3+l} (M_1 ) \right]  ,
$$
and
$$
{\mbox {row}}_{2k+3} (M_1 ) = {1\over \sqrt{1-\mu^2}} \left[  \sum_{l=1}^{k} \sin \theta_l  {\mbox {row}}_{k+2+l}  (M_1 ) +  \sum_{l=1}^{k} \cos \theta_l {\mbox {row}}_{2k+3+l} (M_1 ) \right] .
$$
This implies that the vectors  $${\bf{w}_0}=  \left[ \begin{array}{r}  1 \\
- \bf{1}_k \\ 0 \\ - \bf{1}_k \\ 0 \\ 0
\end{array} \right], \quad  {\bf{w}_1} = \left[ \begin{array}{r} 0\\ 0\\ 1 \\
-{1\over \sqrt{1-\mu^2}} \bf{cos } \\ 0 \\ {1\over \sqrt{1-\mu^2}} \bf{sin}
\end{array} \right], \quad  {\bf{w}_2} =\left[ \begin{array}{r}  0\\ 0\\ 0\\ -{1\over \sqrt{1-\mu^2}} \bf{sin} \\ 1 \\
-{1\over \sqrt{1-\mu^2}} \bf{cos}
\end{array} \right]
\in {\mbox {kernel}} (M_1 )$$ and thus that the system  $M_1  \left[ \begin{array}{r}  \textsf{c}_0 \\
 \textsf{c}_1\\ \textsf{c}_2
\end{array} \right] =   \left[ \begin{array}{r}  \textsf{r}_0 \\
 \textsf{r}_1\\ \textsf{r}_2
\end{array} \right] $ is solvable only if  $  \left[ \begin{array}{r}  \textsf{r}_0 \\
 \textsf{r}_1\\ \textsf{r}_2
\end{array} \right] \cdot  \bf{w}_j=0$, for $j=0,1,2$.
 On the other hand, this last solvability condition is satisfied as consequence of \equ{decr0}, \equ{decr00} and \equ{decr000}.

\medskip
\noindent
We thus conclude that system \equ{sistem2} is solvable and the solution has the form
\begin{equation}\label{solsist1}
\left[ \begin{array}{r}  \textsf{c}_0 \\
 \textsf{c}_1\\ \textsf{c}_2
\end{array} \right] =  \left[ \begin{array}{r}  0 \\
\bar {\textsf{c}}_0 \\
0 \\ \bar {\textsf{c}}_1 \\0 \\ \bar {\textsf{c}}_\alpha
\end{array} \right] + t  {\bf{w}_0}+ s  {\bf{w}_1}+ r {\bf{w}_2} \quad {\mbox {for all}} \quad t, s, r \in \R
\end{equation}
and, if $\alpha = 3, \ldots , n$
\begin{equation}
\label{solsist2}
\textsf{c}_\alpha = \left[ \begin{array}{r}  0 \\
\bar {\textsf{c}}_\alpha
\end{array} \right] + t \left[ \begin{array}{r}  1 \\
- \bf{1}_k
\end{array} \right] \quad {\mbox {for all}} \quad t \in \R
\end{equation}
In \equ{solsist1}-\equ{solsist2},  $\bar {\textsf{c}}_\alpha$ for $\alpha =0, \ldots , n$,  are $(n+1)$ vectors in  $ \R^k$, respectively  given by
\begin{equation}
\label{vectorsc}
\bar {\textsf{c}}_\alpha = \left[ \begin{array}{r} c_{\alpha 1} \\ c_{\alpha 2} \\ \ldots \\ c_{\alpha k}
\end{array} \right].
\end{equation}
These vectors correspond to solutions of the systems
\begin{equation}
\label{sistem3}
N  \left[ \begin{array}{r}  \bar {\textsf{c}}_0 \\
\bar { \textsf{c}}_1\\ \bar {\textsf{c}}_2
\end{array} \right] = \left[ \begin{array}{r} \bar { \textsf{r}}_0 \\
\bar{ \textsf{r}}_1\\ \bar {\textsf{r}}_2
\end{array} \right] , \quad H_\alpha \, [
 \bar {\textsf{c}}_\alpha ] = \bar {\textsf{r}}_\alpha \quad {\mbox {for}} \quad \alpha  = 3, \ldots , n.
\end{equation}
In the above formula $\bar {\textsf{r}}_\alpha$ for $\alpha =0, \ldots , n$,  are $(n+1)$ vectors in  $ \R^k$, respectively  given by
$$ \bar {\textsf{r}}_\alpha =  \left[ \begin{array}{r}
  \int_{\R^n}  L(\varphi^\perp) Z_{\alpha , 1} \\.. \\   \int_{\R^n}  L(\varphi^\perp) Z_{\alpha , k}
\end{array}
\right].
$$
In \equ{sistem3} the matrix $N$ is defined by
\begin{equation}
\label{roma4}
N:= \left[ \begin{array}{rrr}  A  & B &  C \\
  B^T  &  F &  D  \\
  C^T &  D^T &  G
\end{array}
\right]
\end{equation}
where $A$, $B$, $C$, $D$, $F$, $G$ are $k\times k $ matrices whose entrances are given respectively by
\begin{equation}
\label{AF}
A=  \left( \int L(Z_{0i} ) Z_{0j} \right)_{i,j=1,\ldots , k} ,  F =  \left( \int L(Z_{1i} ) Z_{1j} \right)_{i,j=1,\ldots , k} ,
\end{equation}
\begin{equation}
\label{GB} G=  \left( \int L(Z_{2i} ) Z_{2j} \right)_{i,j=1,\ldots , k}  , B =  \left( \int L(Z_{0i} ) Z_{1j} \right)_{i,j=1,\ldots , k} ,
\end{equation}
and
\begin{equation}
\label{CD}
  C=  \left( \int L(Z_{0i} ) Z_{2j} \right)_{i,j=1,\ldots , k}, D=  \left( \int L(Z_{1i} ) Z_{2j} \right)_{i,j=1,\ldots , k}
\end{equation}
Furthermore, in \equ{sistem3} the matrix $H_\alpha$ is defined by
\begin{equation}
\label{H}
H_\alpha=   \left( \int L(Z_{\alpha i} ) Z_{\alpha j} \right)_{i,j=1,\ldots , k}, \quad \alpha = 3, \ldots , n.
\end{equation}

\bigskip
The rest of this section is devoted to compute explicitely the entrances of the matrices
$A$, $B$, $C$, $D$, $F$, $G$, $H_\alpha$  and their eigenvalues.

\medskip
We start with the following observation: all matrices $A$, $B$, $C$, $D$, $F$, $G$ and $H_\alpha$  in \equ{sistem3}  are circulant matrices of dimension $k \times k$. For properties of circulant matrices, we refer to \cite{KS}.

\medskip

A circulant matrix $X$ of dimension $k \times k$ has the form
$$
X= \left[ \begin{array}{rrrrrr} x_0 & x_1 & \ldots & \ldots & x_{k-2}  & x_{k-1} \\
x_{k-1}& x_0 & x_1 & \ldots & \ldots & x_{k-2} \\
\ldots & x_{k-1} & x_0 & x_1 & \ldots & \ldots \\
\ldots & \dots & \ldots & \ldots & \ldots & \ldots \\
\ldots & \dots & \ldots & \ldots & \ldots & x_1 \\
x_1 & \ldots & \ldots & \ldots & x_{k-1 } & x_0
\end{array}
\right],
$$
or equivalently, if $x_{ij}$, $i, j=1, \ldots , k$ are the entrances of the matrix $X$, then
$$
x_{i,j} = x_{1, |i-j|+1}.
$$
In particular, in order to know a circulant matrix it is enough to know the entrances of its first row.

\medskip
\noindent
The eigenvalues of a circulant matrix $X$ are given by the explicit  formula
\begin{equation}
\label{eigenva}
\eta_m = \sum_{l=0}^{k-1} x_l e^{ {2\pi \, m \over k} \, i \, l}, \quad m=0, \ldots , k-1
\end{equation}
and with corresponding  normalized  eigenvectors defined by
\begin{equation}
\label{eigenve}
E_m = k^{-{1\over 2}} \left[ \begin{array}{r} 1 \\
 e^{ {2\pi \, m \over k} \, i \, }\\
 e^{ {2\pi \, m \over k} \, i \, 2}\\
\ldots \\
 e^{ {2\pi \, m \over k} \, i \, (k-1)}
\end{array}
\right] \quad m=0, \ldots , k-1.
\end{equation}
Observe that any circulant matrix $X$ can be diagonalized
$$
X= P \, D_X \, P^T
$$
where $D_X$ is the diagonal matrix
\begin{equation}
\label{diag1}
D_X = {\mbox {diag}} ( \eta_0, \eta_1 , \ldots , \eta_{k-1} )
\end{equation}
and $P$ is the $k \times k$ invertible matrix defined by
\begin{equation}
\label{diag2}
P= \left[ \begin{array}{rrrrrr} E_0 & \bigl| & E_1 & \bigl| & \ldots & E_{k-1} \\
\end{array} \right].
\end{equation}

\medskip
The matrices $A$, $B$, $C$, $D$, $F$, $G$ and $H_\alpha$ are  circulant as a consequence of the invariance under rotation of an angle ${2\pi \over k}$ in the
$(x_1 , x_2 )$-plane of the functions $Z_{\alpha j}$. This is trivial in the case of $Z_{0l}$ and $Z_{\alpha , l}$ for all $\alpha =3, \ldots , n$. On the other hand, if we denote by $R_j$ the rotation in the $(x_1 , x_2)$ plane of angle ${2\pi \over k} (j-1)$, then we get
$$
Z_{1, j} (x) = \nabla U_j (x) \cdot \xi_j = \mu^{-{n-2 \over 2}} \nabla U ({R_j (y-\xi_1) \over \mu} ) \cdot R_j \xi_1
$$
$$
= \mu^{-{n-2 \over 2}} R_j^{-1} U ({R_j (y-\xi_1) \over \mu} ) \cdot \xi_1, \quad x=R_j y.
$$
Thus, for instance
$$
(F)_{jj} = \int L(Z_{1j} ) Z_{1j} =  \int L(Z_{11} ) Z_{11} = (F)_{11} , \quad j=1, \ldots , k
$$
and, after a rotation of an angle of ${2\pi \over k} (|h-j|+1)$,
$$
(F)_{hj} = \int L (Z_{1h} ) Z_{1j} = \int L (Z_{11} ) Z_{1 (j-h+1)} = (F)_{1 (|j-h|+1)}
$$
In a similar way one can show that
$$
Z_{2, j} (x)
= \mu^{-{n-2 \over 2}} R_j^{-1} U ({R_j (y-\xi_1) \over \mu} ) \cdot \xi_1^{\perp} , \quad x=R_j y.
$$
With this in mind, it is straightforward to show that also the matrices $B$, $C$, $D$ and  $G$ are circulant.

\medskip
A second observation we want to make is that
$$
A, B, F, G, H_\alpha \quad {\mbox {are symmetric}}
$$
while
$$
C, D \quad {\mbox {are anti-symmetric}}.
$$
The fact that $A$, $F$, $G$ and $H_\alpha$ are symmetric follows directly from their definition.
On the other hand, we have
$$
Z_{1j} (x) = \mu^{-{n-2 \over 2}} R_{2j}^{-1} \nabla U ({R_{2j} (y-\xi_{k-j+1}) \over \mu} ) \cdot \xi_{k-j+1},\quad x=R_{2j} y
$$
thus
$$
B_{1 , j} = \int L(Z_{0,1} ) Z_{1, j} = \int L(Z_{0,1} ) Z_{1, k-j+2} = B_{1, k-j+2}.
$$
Furthermore,
$$
Z_{2j} (x) = \mu^{-{n-2 \over 2}} R_{2j}^{-1} \nabla U ({R_{2j} (y-\xi_{k-j+1}) \over \mu} ) \cdot  (- \xi_{k-j+1} )^{\perp} , \quad  x=R_{2j} y
$$
and thus
$$
C_{1 , j} = \int L(Z_{0,1} ) Z_{2, j} = - \int L(Z_{0,1} ) Z_{2, k-j+2} =-  C_{1, k-j+2},
$$
and
$$
D_{1 , j} = \int L(Z_{1,1} ) Z_{2, j} =- \int L(Z_{1,1} ) Z_{2, k-j+2} = - D_{1, k-j+2},
$$
for $j\geq 2$. Combining this property with the property of being circulant, we get that $B$ is symmetric,
while $C$ and $D$ are anti-symmetric.

\medskip

Let us now introduce the following positive number
\begin{equation}
\label{Xi}
\Xi =  p \, \gamma  {(n-2) \over 2} \left( - \int y_1 \, U^{p-1} \, Z_1 (y) \, dy \right).
\end{equation}

\medskip
\noindent
Next we describe the entrances of the matrices $A$, $F$, $G$, $B$, $C$ , $D$ and $H_\alpha$,
together with their eigenvalues.
We refer the reader to Section \ref{appe1} for the detailed proof of the following expansions.
With $O(1)$ we denotes a quantity which is uniformly bounded, as $k \to \infty$.

\medskip
\noindent
{\bf The matrix $ A$.}
The matrix  $A= (A_{ij} )_{i, j=1, \ldots , k}$ defined by
$$
A_{ij} = \int_{\R^n} L(Z_{0i} ) Z_{0j}
$$
is  symmetric.
We have
\begin{equation} \label{A11}
A_{11}= k^{n-2} \mu^{n-1} O(1)
\end{equation}
and
 for any integer $l>1$,
\begin{equation}
\label{A1l}
A_{1l}=  \, \Xi  \,\left[ {- {(n-2) \over 2} \over (1-\cos \theta_l )^{n-2 \over 2} } \right] \, \mu^{n-2} +  \mu^{n-1 } k^{n-2} O(1),
\end{equation}
where $O(1)$ is bounded as $k \to \infty$.

\medskip
\noindent
{\bf Eigenvalues for $A$:} A direct application of \equ{eigenva} gives that the eigenvalues of the matrix $A$ are given by
\begin{equation}
\label{am}
a_m =  \Xi   \,  \bar a_m  \,  \mu^{n-2} .
\end{equation}
for $m=0, 1, \ldots , k-1$, where
\begin{eqnarray}
\label{baram}
\bar a_m &= & - {n-2 \over 2}  \left[ \sum_{l>1}^k {\cos (m\theta_l ) \over (1-\cos \theta_l )^{n-2 \over 2} }  \right] \, \left( 1+ O({1\over k}) \right) \nonumber \\
&=&  - {n-2 \over 2} {k^{n-2} \over (\sqrt{2} \pi )^{n-2} } g''  ({2\pi \over k } m )  \left(1+ O({1\over k}) \right)
\end{eqnarray}
where $g$ is the function defined in \equ{defg}.

\medskip
\noindent
{\bf The matrix $F$.}
The matrix  $F= (F_{ij} )_{i, j=1, \ldots , k}$ defined by
$$
F_{ij} = \int_{\R^n} L(Z_{1i} ) Z_{1j}
$$
is symmetric. We have
\begin{equation}
\label{F11}
F_{11} =  \Xi   \, \left[ \sum_{l>1}^k {  \cos \theta_l \over (1-\cos \theta_l )^{n \over 2}} \right] \, \mu^{n- 2 \over 2} + O(\mu^{n \over 2} )
\end{equation}
and, for any $l >1$
\begin{equation}
\label{F1l}
F_{1l} = \, \Xi  \,  \left[ {{n-2  \over 2} \cos \theta_l - {n \over 2}   \over (1-\cos \theta_l )^{n\over 2}} \right] \mu^{n-2} + O (\mu^{n\over 2} )
\end{equation}
where $O(1) $ is bounded as $k \to 0$.

\medskip
\noindent
{\bf Eigenvalues for $F$.} For any $m=0, \ldots , k-1$, the eigenvalues of $F$ are
\begin{equation}
\label{fm}
f_m =  \Xi \, \bar f_m  \, \mu^{n-2} .
\end{equation}
where
\begin{eqnarray}\label{newfbarm}
\bar f_m &=& \left[ \sum_{l>1}^k { \cos  \theta_l \over (1-\cos \theta_l )^{n-2 \over 2} }
 \right. \nonumber \\
 &+ &  \left. \sum_{l>1}^k {{n-2 \over 2} \cos \theta_l -{n\over 2}   \over (1-\cos \theta_l )^{n \over 2}} \cos m \theta_l \right]  \left( 1+ O({1\over k}) \right).
\end{eqnarray}

\medskip
\noindent
{\bf The matrix $ G$.}
The matrix  $G= (G_{ij} )_{i, j=1, \ldots , k}$ defined by
$$
G_{ij} = \int_{\R^n} L(Z_{2i} ) Z_{2j}
$$
is symmetric.
We have
\begin{equation}
\label{G11}
G_{11} =    \Xi \,   \left[ \sum_{l>1}^k {{n-2\over 2} \cos \theta_l  +{ n \over 2}  \over (1-\cos \theta_l )^{n \over 2}} \right]  \,   \mu^{n-2 \over 2 }  \, + \, \mu^{n\over 2} O(1)
\end{equation}
and, for $l>1$,
\begin{equation}
\label{G1l}
G_{1l}=  - \, \Xi \,   \left[  {{n-2\over 2} \cos \theta_l  +{ n \over 2}  \over (1-\cos \theta_l )^{n \over 2}} \right] \, \mu^{n-2} \, + \, O(\mu^{n\over 2})
\end{equation}
Again $O(1)$ is bounded as $k \to \infty$.

\medskip
\noindent
{\bf Eigenvalues for $G$.} The eigenvalues of $G$ are given by
\begin{equation}
\label{gm}
g_m =   \, \Xi \, \bar g_m  \, \mu^{n-2}
\end{equation}
for  $m=0, \ldots , k-1$ where
\begin{eqnarray} \label{bargm}
\bar g_m &= &-
 \left[  \sum_{l>1}^k {\left( {n-2 \over 2} \cos \theta_l + {n \over 2} \right) \left(1 -  \cos m \theta_l \right)\over (1-\cos \theta_l )^{n \over 2}} \right]  \left( 1+ O({1\over k}) \right) \nonumber \\
&=&-
 {k^{n} \over (\sqrt{2} \pi )^{n}} \, (n-1) \, g({2\pi \over k} m) \left( 1+  O({1 \over k}) \right),
\end{eqnarray}
see \equ{defg} for the definition of $g$.

\medskip
\noindent
{\bf The matrix $ B$.}
The matrix  $B= (B_{ij} )_{i, j=1, \ldots , k}$ defined by
$$
B_{ij} = \int_{\R^n} L(Z_{0i} ) Z_{1j}
$$
is symmetric. We have
\begin{equation}
\label{B11}
B_{11} = \mu^{n-1} k^{n-2} O(1)
\end{equation}
and, for any $l>1$,
\begin{equation}
\label{B1l}
B_{1l}= \, \Xi \,  \left[ {{n-2 \over 2}  \over (1-\cos \theta_l )^{n-2 \over 2}} \right] \mu^{n-2}  + \mu^{n-1} k^{n-2} O(1).
\end{equation}

\medskip
\noindent
{\bf Eigenvalues for $B$.}  For any $m=0, \ldots , k-1$
\begin{equation}
\label{bm}
b_m =  \Xi \, \bar b_m \, \mu^{n-2}
\end{equation}
with
\begin{eqnarray}\label{newbbarm}
\bar b_m &=& {n-2 \over 2} \sum_{l>1}^k {\cos m \theta_l \over (1-\cos \theta_l )^{n-2 \over 2}}   \, \left( 1+ O({1\over k}) \right) \nonumber \\
&=&  {n-2 \over 2} {k^{n-2} \over (\sqrt{2} \pi )^{n-2} }\, g''  ({2\pi \over k } m ) \left( 1+ O({1\over k})\right)
\end{eqnarray}
see \equ{defg} for the definition of $g$.

\medskip
\noindent
{\bf The matrix $ C$.}
The matrix  $C= (C_{ij} )_{i, j=1, \ldots , k}$ defined by
$$
C_{ij} = \int_{\R^n} L(Z_{0i} ) Z_{2j}
$$
is anti symmetric. We have
\begin{equation}
\label{C11}
C_{11} = k^{n-2} \mu^{n-1} O(1)
\end{equation}
and, for $l>1$,
\begin{equation}
\label{C1l}
C_{1l}= \, \Xi \,  \left[ {{n-2 \over 2} \sin \theta_l  \over (1-\cos \theta_l )^{n\over 2}} \right] \mu^{n-2} + k^{n-2} \mu^{n-1} O(1).
\end{equation}

\medskip
\noindent
{\bf Eigenvalues for $C$.} For any $m=0, \ldots , k-1$
\begin{equation}
\label{cm}
c_m =  \, \, \Xi \, i \, \bar c_m  \, \mu^{n-2}
\end{equation}
where
\begin{eqnarray}
\label{barcm}
\bar c_m  &= & {n-2 \over 2}    \left[
\sum_{l>1}^k  {\sin \theta_l \, \sin m \theta_l \over
(1-\cos \theta_l )^{n \over 2} } \right] \,  \left( 1+ O({1\over k}) \right)  \nonumber
\\
&=&  {n-2 \over 2} {\sqrt{2} k^{n-1} \over (\sqrt{2} \pi )^{n-1} } g' ({2\pi \over k } m )  \,  \left( 1+ O({1\over k}) \right)
\end{eqnarray}
see \equ{defg} for the definition of $g$.

\medskip
\noindent
{\bf The matrix $ D$.}
The matrix  $D= (D_{ij} )_{i, j=1, \ldots , k}$
$$
D_{ij} = \int_{\R^n} L(Z_{1i} ) Z_{2j}
$$
is anti symmetric. We have
\begin{equation}
\label{D11}
D_{11} = k^{n-1} \mu^{n-1} O(1)
\end{equation}
and, for $l>1$,
\begin{equation}
\label{D1l}
D_{1l}=- \, \Xi \,  \left[ {{n-2 \over 2} \sin \theta_l  \over (1-\cos \theta_l )^{n\over 2}} \right] \mu^{n-3} + k^{n-1} \mu^n O(1).
\end{equation}

\medskip
\noindent
{\bf Eigenvalues for $D$.} For any $m=0, \ldots , k-1$
\begin{equation}
\label{dm}
d_m = -  i \,  \, \Xi   \, \bar d_m \, \mu^{n-2} \,
\end{equation}
with
\begin{eqnarray*}
\bar d_m & =& -{n -2 \over 2} \left[
\sum_{l>1}^k {\sin \theta_l \sin m \theta_l \over
(1-\cos \theta_l )^{n \over 2} } \right]  \, \left( 1+ O({1\over k}) \right) \\
&=&-   {n-2 \over 2} {\sqrt{2} k^{n-1} \over (\sqrt{2} \pi )^{n-1} } g' ({2\pi \over k } m )  \,  \left( 1+ O({1\over k}) \right)
\end{eqnarray*}
see \equ{defg} for the definition of $g$.

\medskip
\noindent
{\bf The matrix $ H_\alpha$, for $\alpha = 3, \ldots , n$.} Fix $\alpha = 3$. The other dimensions can be treated in the same way.
The matrix  $H_3= (H_{3, ij} )_{i, j=1, \ldots , k}$ defined by
$$
H_{3, ij} = \int_{\R^n} L(Z_{3i} ) Z_{3j}
$$
is symmetric. We have
\begin{equation}
\label{H311}
H_{3, 11} =   \Xi \,   \mu^{n-2 \over 2} \,   \left[  \sum_{l>1}^k {-\cos \theta_l  \over (1-\cos \theta_l )^{n \over 2}} \right]  \, + \, O(\mu^{n\over 2} )
\end{equation}
and, for $l>1$,
\begin{equation}
\label{H31l}
H_{3, 1l}=  \, \Xi \,   \left[ {1 \over (1-\cos \theta_l )^{n\over 2}} \right] \, \mu^{n-2} \, + \, O(\mu^{n\over 2}).
\end{equation}

\medskip
\noindent
{\bf Eigenvalues for $H_3$.} For any $m=0, \ldots , k-1$
\begin{equation}
\label{h3m}
h_{3, m} =\, \Xi \,  \bar h_{3  ,m } \, \mu^{n-2}
\end{equation}
where
\begin{eqnarray}
\bar h_{3,m} &= & \left[
\sum_{l>1}^k { - \cos \theta_l +\cos m \theta_l \over
(1-\cos \theta_l )^{n \over 2} } \right] \, \left (1+ O({1\over k} ) \right) . \nonumber
\end{eqnarray}

\medskip
\section{Solving a linear system.}\label{lineare}

This section is devoted to solve system \equ{sistem3}, namely
$$
N  \left[ \begin{array}{r}  \bar {\textsf{c}}_0 \\
\bar { \textsf{c}}_1\\ \bar {\textsf{c}}_2
\end{array} \right] = \left[ \begin{array}{r} \bar { \textsf{s}}_0 \\
\bar{ \textsf{s}}_1\\ \bar {\textsf{s}}_2
\end{array} \right] , \quad H_\alpha \, [
 \bar {\textsf{c}}_\alpha ] = \bar {\textsf{s}}_\alpha \quad {\mbox {for}} \quad \alpha  = 3, \ldots , n.
$$
for a given right hand side $ \left[ \begin{array}{r} \bar { \textsf{s}}_0 \\
\bar{ \textsf{s}}_1\\ \bar {\textsf{s}}_2
\end{array} \right] \in \R^{3k}$, and $ \bar {\textsf{s}}_\alpha \in \R^k$,  where $N$ is the matrix defined in \equ{roma4} and $H_\alpha$ are the matrices defined in \equ{H}.

\medskip
\noindent

\medskip
\noindent
Let
\begin{equation}
\label{upsilon}
\Upsilon=  {( \sqrt{2} \pi )^{n-2}\over p \gamma {n-2 \over 2} \Xi },
\end{equation}
where $\Xi$ is defined in \equ{Xi}.
We have the validity of the following

\begin{proposition}\label{invsist}

Part a. \\ There exist $k_0$ and $C>0$  such that, for all $k > k_0 $ ,  System
\begin{equation}
\label{primo}
N  \left[ \begin{array}{r}  \bar {\textsf{c}}_0 \\
\bar { \textsf{c}}_1\\ \bar {\textsf{c}}_2
\end{array} \right] = \left[ \begin{array}{r} \bar { \textsf{s}}_0 \\
\bar{ \textsf{s}}_1\\ \bar {\textsf{s}}_2
\end{array} \right]
\end{equation}
is solvable if
\begin{equation} \label{solvable1}
\bar {\textsf{s}}_2 \cdot {\bf {1}}_k = ( \bar {\textsf{s}}_0 + \bar {\textsf{s}}_1 ) \cdot {\bf{cos}} =  ( \bar {\textsf{s}}_0 + \bar {\textsf{s}}_1 ) \cdot {\bf{sin}} = 0.
\end{equation}
Furthermore,  the solutions of System \equ{primo} has the form
\begin{equation}
\label{primo1}
\left[ \begin{array}{r}  \bar {\textsf{c}}_0 \\
\bar { \textsf{c}}_1\\ \bar {\textsf{c}}_2
\end{array} \right] = \left[ \begin{array}{r}  \bar {\textsf{w}}_0 \\
\bar { \textsf{w}}_1\\ \bar {\textsf{w}}_2
\end{array} \right] + t_1 \left[ \begin{array}{r}  0 \\
0 \\ {\bf{1}}_k
\end{array} \right] + t_2 \left[ \begin{array}{r}  {\bf{cos }} \\
-  { \bf{cos}}\\ 0
\end{array} \right] + t_3 \left[ \begin{array}{r}   {\bf{sin}} \\ -
 { \bf{sin}}\\ 0
\end{array} \right]
\end{equation}
for all $t_1 , t_2 , t_3 \in \R$, and with $ \left[ \begin{array}{r}  \bar {\textsf{w}}_0 \\
\bar { \textsf{w}}_1\\ \bar {\textsf{w}}_2
\end{array} \right]$ a fixed vector such that
\begin{equation}
\label{primo2}
\| \left[ \begin{array}{r}  \bar {\textsf{w}}_0 \\
\bar { \textsf{w}}_1\\ \bar {\textsf{w}}_2
\end{array} \right] \| \leq {C \over k^n \mu^{n-2}}  \left[ \begin{array}{r} \bar { \textsf{s}}_0 \\
\bar{ \textsf{s}}_1\\ \bar {\textsf{s}}_2
\end{array} \right]\| .
\end{equation}

\bigskip
Part b. \ \ Let $\alpha =3, \ldots , n$.
There exist $k_0$ and $C$ such that, for any $k > k_0$, system
\begin{equation}
\label{secondo}
 H_\alpha \, [
 \bar {\textsf{c}}_\alpha ] = \bar {\textsf{s}}_\alpha
\end{equation}
is solvable only if
\begin{equation}
\label{solvable2}
\bar { \textsf{s}}_\alpha \cdot {\bf{cos}} = \bar { \textsf{s}}_\alpha \cdot  {\bf{sin}} = 0.
\end{equation}
Furthermore,  the solutions of System \equ{secondo} has the form
\begin{equation}
\label{primo1}
  \bar {\textsf{c}}_\alpha
=   \bar {\textsf{w}}_\alpha
+ t_1    {\bf{cos }}
+ t_2   {\bf{sin }}
\end{equation}
for all $t_1 , t_2  \in \R$, and with $ \left[ \begin{array}{r}  \bar {\textsf{w}}_\alpha
\end{array} \right]$ a fixed vector such that
\begin{equation}
\label{primo2}
\| \left[ \begin{array}{r}  \bar {\textsf{w}}_\alpha
\end{array} \right] \| \leq {C \over k^n \mu^{n-2}}  \left[ \begin{array}{r} \bar { \textsf{s}}_\alpha
\end{array} \right]\| .
\end{equation}

\end{proposition}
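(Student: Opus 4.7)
The plan is to exploit the circulant structure of all the $k\times k$ blocks $A, B, C, D, F, G$ (and $H_\alpha$) and diagonalize them simultaneously in the Fourier basis $\{E_m\}_{m=0}^{k-1}$ from \equ{eigenve}. Under this block--Fourier transform the $3k\times 3k$ system \equ{primo} decouples into $k$ independent $3\times 3$ Hermitian complex linear systems
$$
M_m\,\hat{\mathbf{c}}_m = \hat{\mathbf{s}}_m,\qquad
M_m = \begin{pmatrix} a_m & b_m & c_m \\ b_m & f_m & d_m \\ -c_m & -d_m & g_m \end{pmatrix},\qquad m=0,\dots,k-1,
$$
where $a_m,b_m,f_m,g_m$ are real and $c_m,d_m$ are purely imaginary; the minus signs in the bottom row reflect that $C^T=-C$, $D^T=-D$ translate into $D_{C^T}=-D_C$ after Fourier conjugation. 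Part b reduces analogously to $k$ scalar equations $h_{\alpha,m}\hat c_{\alpha,m}=\hat s_{\alpha,m}$.

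Next I would pin down the kernel of $N$ using the invariances of the problem rather than by asymptotics of $M_m$ alone. Combining the identities $L(z_\alpha)=0$ for $\alpha\in\{0,1,2,n+1,n+2,n+3\}$ with the decompositions \equ{ang1}--\equ{canada3} and pairing against $\{L(Z_{\beta,j})\}_{\beta,j}$ exhibits the three vectors $(\mathbf{0},\mathbf{0},\mathbf{1}_k)^T$, $(\mathbf{cos},-\mathbf{cos},\mathbf{0})^T$, and $(\mathbf{sin},-\mathbf{sin},\mathbf{0})^T$ as kernel directions of $N$. After Fourier transform, these localize to mode $m=0$ (direction $(0,0,1)^T$) and to modes $m=1,k-1$ (a direction close to $(1,-1,0)^T$, perturbed by the $O(1/k)$ corrections in the eigenvalue asymptotics). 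Since $N^T=N$ by the symmetry/antisymmetry pattern of the blocks, Fredholm's alternative turns the existence of these three kernel vectors into exactly the solvability conditions \equ{solvable1} and supplies the three-parameter family in \equ{primo1}. Mode $m=0$ is transparent: $c_0=d_0=0$ by antisymmetry, and $\bar g_0=0$ by \equ{bargm} via the factor $1-\cos m\theta_l$, so the third row and column of $M_0$ vanish and the surviving $2\times 2$ upper block is invertible by its leading-order asymptotics.

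The crux is inverting $M_m$ for every $m\notin\{0,1,k-1\}$ with a quantitative bound. Using the leading-order identity $\bar a_m=-\bar b_m$ together with the relations among $\bar c_m,\bar d_m,\bar f_m,\bar g_m$ read off from \equ{baram}--\equ{dm}, \equ{fm}, \equ{gm}, a direct expansion of $\det M_m$ produces at leading order an expression whose sign and size are governed by the bracket
$$
(n-1)\,g''(x_m)\,g(x_m)\,-\,(n-2)\,(g'(x_m))^2,\qquad x_m:=\tfrac{2\pi m}{k}.
$$
Hypothesis \equ{condig} is precisely the statement that this quantity stays strictly negative on $(0,\pi)$, which provides the required uniform lower bound for $|\det M_m|$ on $m\neq 0,1,k-1$ once one separately checks that the remaining scalar factor is bounded away from zero. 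Inverting $M_m$ mode by mode and applying Parseval back in position space yields the estimate \equ{primo2}. Part b follows the same scheme with the single eigenvalue $h_{\alpha,m}$: from \equ{h3m} one reads off that $\bar h_{\alpha,1}=\bar h_{\alpha,k-1}=0$ at leading order (the summand's numerator $-\cos\theta_l+\cos m\theta_l$ vanishes when $m=\pm 1$), producing the kernel directions $\mathbf{cos},\mathbf{sin}$ and hence the solvability \equ{solvable2}, while for all other modes one bounds $|h_{\alpha,m}|$ from below by the same type of argument and inverts.

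The main obstacle is that the leading-order relation $\bar a_m+\bar b_m=0$ makes three of the $3\times 3$ matrices $M_m$ (at $m=0,1,k-1$) genuinely near-singular; one must carefully track the $O(1/k)$ corrections in all the eigenvalue asymptotics and verify both that the real kernel of $N$ has dimension exactly three (no extra resonances appear for intermediate $m$) and that condition \equ{condig} really delivers a lower bound on $|\det M_m|$ robust against these corrections. Should \equ{condig} fail at a particular dimension, the remark following Theorem~\ref{teo} indicates that one instead works along a subsequence $k_j$ chosen to avoid the resonant modes.
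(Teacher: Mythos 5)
Your proposal follows essentially the same route as the paper: simultaneous Fourier diagonalization of the circulant blocks, reduction to $k$ independent $3\times 3$ systems, identification of the three degenerate modes $m=0,1,k-1$ (whose Fredholm conditions give \equ{solvable1} and the kernel directions $(\mathbf{0},\mathbf{0},\mathbf{1}_k)$, $(\mathbf{cos},-\mathbf{cos},\mathbf{0})$, $(\mathbf{sin},-\mathbf{sin},\mathbf{0})$), and control of $\det M_m$ for the remaining modes via the relation $\bar a_m=-\bar b_m$, $\bar c_m=-\bar d_m$ and condition \equ{condig}, with Part b handled by the scalar eigenvalues $h_{\alpha,m}$ vanishing at $m=\pm 1$. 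This matches the paper's proof, including the quantitative bound $C/(k^n\mu^{n-2})$ obtained mode by mode.
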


\medskip
\noindent
\begin{proof}

\noindent {\it Part a}. \ \

Define $${\mathcal P} = \left[ \begin{array}{rrr} P & 0 & 0 \\
0 & P & 0 \\
0 & 0 & P
\end{array} \right]
$$
where $P$ is defined in \equ{diag2}, a simple algebra gives that
$$
 N = {\mathcal P} {\mathcal D} {\mathcal P}^T
$$
where
$$
{\mathcal D} = \left[ \begin{array}{rrr}  D_{A  } &  D_{B}  & D_{ C } \\
D_{B}  & D_{F} & D_{D} \\
D_{-C} & D_{-D } & D_{G }
\end{array}
\right].
$$
Here $D_X$ denotes the diagonal matrix of dimension $k \times k$ whose entrances are given by the eigenvalues of $X$. For instance
$$
D_{A } = {\mbox {diag}} \left( a_0 , a_1 , \ldots , a_{k-1} \right)
$$
where $a_j$ are the eigenvalues of the matrix $A$, defined in \equ{am}.
Using the change of variables
\begin{equation}
\label{changeofvariables}
 \left[ \begin{array}{r}  \bar {\textsf{y}}_0 \\
\bar { \textsf{y}}_1\\ \bar {\textsf{y}}_2
\end{array} \right]  ={\mathcal  P}^T  \left[ \begin{array}{r}  \bar {\textsf{c}}_0 \\
\bar { \textsf{c}}_1\\ \bar {\textsf{c}}_2
\end{array} \right] ; \quad  \left[ \begin{array}{r}  \bar {\textsf{s}}_0 \\
\bar { \textsf{s}}_1\\ \bar {\textsf{s}}_2
\end{array} \right]  = {\mathcal P}  \left[ \begin{array}{r}  \bar {\textsf{h}}_0 \\
\bar { \textsf{h}}_1\\ \bar {\textsf{h}}_2
\end{array} \right],
\end{equation}
with
$
  \bar {\textsf{y}}_\alpha =  \left[ \begin{array}{r}  y_{\alpha , 1} \\
y_{\alpha , 2} \\ \ldots \\ y_{\alpha , k}  \end{array}\right] ,  \bar {\textsf{h}}_\alpha =  \left[ \begin{array}{r}  h_{\alpha , 1} \\
h_{\alpha , 2} \\ \ldots \\ h_{\alpha , k}  \end{array}\right]  \in \R^k,
$ $\alpha = 0 , 1 , 2$,
one sees that solving
$$
N \left[ \begin{array}{r}  \bar {\textsf{c}}_0 \\
\bar { \textsf{c}}_1\\ \bar {\textsf{c}}_2
\end{array} \right] = \left[ \begin{array}{r} \bar { \textsf{s}}_0 \\
\bar{ \textsf{s}}_1\\ \bar {\textsf{s}}_2
\end{array} \right]
$$
is equivalent to solving
\begin{equation}
\label{roma7}
 {\mathcal D} \left[ \begin{array}{r}  \bar {\textsf{y}}_0 \\
\bar { \textsf{y}}_1\\ \bar {\textsf{y}}_2
\end{array} \right] = \left[ \begin{array}{r} \bar { \textsf{h}}_0 \\
\bar{ \textsf{h}}_1\\ \bar {\textsf{h}}_2
\end{array} \right] .
\end{equation}
Furthermore, observe that
\begin{equation}
\label{lunes3} \| \bar {\textsf{y}} _\alpha \| = \| \bar {\textsf{c}}_\alpha \|, \quad {\mbox {and}} \quad
\| \bar {\textsf{h}}_\alpha  \| = \| \bar {\textsf{s}}_\alpha  \|, \quad \alpha = 0, 1,2.
\end{equation}
Let us now introduce the matrix
$$
D = \left[ \begin{array}{rrrr}  D_0 & 0  & \ldots & 0 \\
0 & D_1 & 0  & \ldots \\
\ldots & \ldots & \ldots & \ldots \\
\ldots & 0 & 0 & D_{k-1}
\end{array}
\right]
$$
where for any $m = 0 , \ldots , k-1$, $D_m$ is the $3\times 3$ matrix given by
\begin{equation}
\label{fi1}
D_m = \left[ \begin{array}{rrr}  a_m & b_m & c_m \\
b_m & f_m & d_m \\
-c_m &- d_m & g_m
\end{array}
\right] =  \Xi \, \mu^{n-2} \,  \left[ \begin{array}{rrr}   \bar a_m &  \bar b_m & i  \bar c_m \\
\bar b_m &  \bar f_m &i   \bar d_m \\
- i  \bar c_m &-  i  \bar d_m &  \bar g_m
\end{array}
\right] \quad
\end{equation}
where $a_m$, $b_m$, $c_m$, $f_m$, $g_m$, $d_m$ are the eigenvalues of the matrices $A$, $B$, $C$, $F$, $G$ and $D$ respectively.
In the above formula we have used  the computation for the eigenvalues
$a_m$, $b_m$, $c_m$, $d_m$, $f_m$ and $g_m$ that we obtained in \equ{am}, \equ{bm}, \equ{cm}, \equ{dm},
\equ{fm} and \equ{gm}.

An easy argument implies that system \equ{roma7} can be re written in the form
\begin{equation}
\label{roma8}
D_m \left[ \begin{array}{r} y_{0, m+1} \\ y_{1,  m+1} \\ y_{2,m+1} \end{array} \right]
=  \left[ \begin{array}{r} h_{0, m+1} \\ h_{1, m+1} \\ h_{2, m+1} \end{array} \right]
\quad m=0, 1, \ldots , k-1.
\end{equation}
Taking into account that $\bar a_m = - \bar b_m$ and $\bar c_m = - \bar d_m$,
a direct algebraic manipulation of the system gives that \equ{roma8} reduces to the simplified system
\begin{equation}
\label{roma10}
   \left[ \begin{array}{rrr}-  \bar b_m &  0 & i  \bar c_m \\
0 & \bar  f_m + \bar b_m & 0 \\
-i  \bar c_m &  0  &  \bar g_m
\end{array}
\right] \left[ \begin{array}{r}  y_{0, m+1} -  y_{1, m+1}  \\  y_{1, m+1} \\
 y_{2, m+1} \end{array} \right] = {1\over \Xi \mu^{n-2}  }  \,  \left[ \begin{array}{r} h_{0, m+1} \\ h_{1, m+1} +  h_{0, m+1} \\ h_{2, m+1} \end{array} \right].
\end{equation}
Let, for any $m=0, \ldots , k-1$,
\begin{equation}
\label{determinante}
\ell_m :=- \left( \bar b_m + \bar f_m \right) \, \left[ \bar g_m \bar b_m + \bar c_m ^2 \right],
\end{equation}
being $\ell_m$ the determinant of the above matrix.

\medskip
\noindent
We have the following cases

\noindent
{\bf Case 1}. \ \
If $m=0$, we have that   $\bar g_0 = \bar c_0 =0$ and so $\ell_0 = 0$. Furthermore,
$$
\bar b_0 = {n-2 \over 2} {k^{n-2} \over (\sqrt{2} \pi )^{n-2} } g'' (0) \, \left( 1+ O({1\over k} ) \right)
$$
and
$$
\bar f_0 + \bar b_0 = - {k^{n-2} \over (\sqrt{2} \pi )^{n-2} } g'' (0) \, \left( 1+ O({1\over k} ) \right).
$$
We conclude that System \equ{roma10} for $m=0$ is solvable if
$$
h_{21} = 0
$$
and there exists a positive constant $C$, independent of $k$, such that  the solution has the form
$$
  \left[ \begin{array}{r}  y_{0, 1}   \\  y_{1, 1} \\
 y_{2, 1} \end{array} \right]  =   \left[ \begin{array}{r}  \hat y_{0, 1}   \\ \hat y_{1, 1} \\
 \hat y_{2, 1} \end{array} \right] + t   \left[ \begin{array}{r}  0   \\  0 \\
 1 \end{array} \right]
$$
for any $t \in \R$ and for a fixed vector $  \left[ \begin{array}{r}  \hat y_{0, 1}   \\ \hat y_{1, 1} \\
 \hat y_{2, 1} \end{array} \right] $ with
$$
\|  \left[ \begin{array}{r}  \hat y_{0, 1}   \\ \hat y_{1, 1} \\
 \hat y_{2, 1} \end{array} \right]  \| \leq {C \over \mu^{n-2 } k^{n-2}} \|  \left[ \begin{array}{r} h_{0, 1} \\ h_{1, 1} \\ h_{2, 1} \end{array} \right] \|.
$$

\medskip
\noindent
{\bf Case 2}. \ \  If $m=1$, we have that $\bar f_1 + \bar b_1 = 0$. By symmetry, for $m=k-1$ we also  have $\bar f_{k-1} + \bar b_{k-1} = 0$. Furthermore
$$
\bar b_1 = \bar b_{k- 1} =  {n-2 \over 2} {  k^{n-2 } \over (\sqrt{2} \pi )^{n-2} } g '' (0) \left( 1+ O({1\over k})  \right),
$$
$$
\bar g_1 = \bar g_{k-1} = - (n-1)  {  k^{n-2 } \over (\sqrt{2} \pi )^{n-2} } g '' (0) \left( 1+ O({1\over k} ) \right), \quad
$$
and
$$
\bar c_1 = - \bar c_{k-1} =  (n-2 ) {  k^{n-2 } \over (\sqrt{2} \pi )^{n-2} } g '' (0) \left( 1+ O({1\over k} ) \right).
$$
We conclude that System \equ{roma10} for $m=1$ is solvable if
$$
h_{02} + h_{12} = 0
$$
and  there exists a positive constant $C$, independent of $k$, such that the solution has the form
$$
  \left[ \begin{array}{r}  y_{0, 2}   \\  y_{1, 2} \\
 y_{2, 2} \end{array} \right]  =   \left[ \begin{array}{r}  \hat y_{0, 2}   \\ \hat y_{1, 2} \\
 \hat y_{2, 2} \end{array} \right] + t   \left[ \begin{array}{r}  1   \\  -1 \\
 0 \end{array} \right]
$$
for any $t \in \R$ and for a fixed vector $  \left[ \begin{array}{r}  \hat y_{0, 2}   \\ \hat y_{1, 2} \\
 \hat y_{2, 2} \end{array} \right] $ with
$$
\|  \left[ \begin{array}{r}  \hat y_{0, 2}   \\ \hat y_{1, 2} \\
 \hat y_{2, 2} \end{array} \right]  \| \leq {C \over \mu^{n-2 } k^{n-2}} \|  \left[ \begin{array}{r} h_{0, 2} \\ h_{1, 2} \\ h_{2, 2} \end{array} \right] \|.
$$
On the other hand, when $m=k-1$  System \equ{roma10}  is solvable if
$$
h_{0 , k} + h_{1, k} = 0
$$
and  there exists a positive constant $C$, independent of $k$, such that the solution has the form
$$
  \left[ \begin{array}{r}  y_{0, k}   \\  y_{1, k} \\
 y_{2, k} \end{array} \right]  =   \left[ \begin{array}{r}  \hat y_{0, k}   \\ \hat y_{1, k} \\
 \hat y_{2, k} \end{array} \right] + t   \left[ \begin{array}{r}  1   \\  - 1 \\
 0 \end{array} \right]
$$
for any $t \in \R$ and for a fixed vector $  \left[ \begin{array}{r}  \hat y_{0, k}   \\ \hat y_{1, k} \\
 \hat y_{2, k} \end{array} \right] $ with
$$
\|  \left[ \begin{array}{r}  \hat y_{0, k}   \\ \hat y_{1, k} \\
 \hat y_{2, k} \end{array} \right]  \| \leq {C \over \mu^{n-2 } k^{n-2}} \|  \left[ \begin{array}{r} h_{0, k} \\ h_{1, k} \\ h_{2, k} \end{array} \right] \|.
$$

\medskip
\noindent
{\bf Case 3}. \ \  Let now $m$ be $\not= 0, 1, k-1$. In this case we have
$$
\bar b_m =  {n-2 \over 2} {  k^{n-2 } \over (\sqrt{2} \pi )^{n-2} } g '' ({2\pi \over k} m ) \left( 1+ O({1\over k})  \right),
$$
$$
\bar f_m + \bar b_m =   {  k^{n } \over (\sqrt{2} \pi )^{n} } g '' ({2\pi \over k} m ) \left( 1+ O({1\over k})  \right),
$$
$$
\bar g_m =-  (n-1) {  k^{n } \over (\sqrt{2} \pi )^{n} } g ({2\pi \over k} m ) \left( 1+ O({1\over k})  \right),
$$
and
$$
\bar c_m =  {n-2 \over 2} {\sqrt{2}   k^{n-1 } \over (\sqrt{2} \pi )^{n-1} } g ' ( {2\pi \over k} m ) \left( 1+ O({1\over k})  \right).
$$
In particular
\begin{eqnarray*}
\ell_m &=& -  \, {n-2 \over 2} \, {k^{3n -2} \over (\sqrt{2} \pi )^{3n-2}} \, g '' ({2\pi \over m}) \, \times \,  \\
& &  \left[- (n-1) g({2\pi \over k } m )
g'' ({2\pi \over k } m ) + (n-2) ( g'({2\pi \over k } m ) )^2 \right] \,  \left( 1+ O({1\over k})  \right)
\end{eqnarray*}
Thus under condition \equ{condig}, we have that
$$\ell_m <0 \quad \forall m=2, \ldots , k-2.
$$
Hence, for all $m\not= 0, 1, k-1$, System \equ{roma10} is uniquely solvable and  there exists a positive constant $C$, independent of $k$, such that  the solution
$  \left[ \begin{array}{r}  \hat y_{0, 1}   \\ \hat y_{1, 1} \\
 \hat y_{2, 1} \end{array} \right] $ satisfies
$$
\|  \left[ \begin{array}{r}  \hat y_{0, 1}   \\ \hat y_{1, 1} \\
 \hat y_{2, 1} \end{array} \right]  \| \leq {C \over \mu^{n-2 } k^{n}} \|  \left[ \begin{array}{r} h_{0, 1} \\ h_{1, 1} \\ h_{2, 1} \end{array} \right] \|.
$$

\medskip
\noindent
Going back to the original variables, and applying a fixed point argument for contraction mappings we get the validity of Part a of
 Proposition \ref{invsist}.

\bigskip
\noindent
{\it Part b.} \ \  Fix $\alpha = 3, \ldots , n$. We have
$$
 H_\alpha  = P D_\alpha P^T
$$
where $P$ is defined in \equ{diag2},
and
$$
D_{\alpha } = {\mbox {diag}} \left( h_{\alpha , 0}  , h_{\alpha ,1 }, \ldots , h_{\alpha , k-1} \right)
$$
where $h_{\alpha , j}$ are the eigenvalues of the matrix $H_\alpha $, defined in \equ{h3m}.
Using the change of variables $  \bar {\textsf{y}}_\alpha = P^T {\textsf{c}}_\alpha$ and $
 \bar {\textsf{s}}_\alpha = P^T {\textsf{h}}_\alpha$, we have to solve $ D_\alpha \textsf{y}_\alpha = {\textsf{h}}_\alpha$.

Recall that, for any $m=0, \ldots , k-1$
$$
h_{\alpha , m} =\, \Xi \,  \bar h_{\alpha  ,m } \, \mu^{n-2}
$$
where
$$
\bar h_{\alpha ,m} =  \left[
\sum_{l>1}^k { - \cos \theta_l +\cos m \theta_l \over
(1-\cos \theta_l )^{n \over 2} } \right] \, \left (1+ O({1\over k} ) \right) .
$$
If $m=1$ or $m=k-1$, we have that $ \sum_{l>1}^k { - \cos \theta_l +\cos m \theta_l \over
(1-\cos \theta_l )^{n \over 2} }   = 0  $, so the system is solvable only if
$h_{\alpha , 2} = h_{\alpha , k-1} = $. On the other hand
we have
$$
h_{\alpha , 0} = \Xi \mu^{n-2} {k^{n-2} \over (\sqrt{2} \pi )^{n-2}}  \left( 1+ O({1\over k} ) \right)
$$
and for $m=2, \ldots , k-2$
$$
h_{\alpha , m} = \Xi \mu^{n-2} {k^{n} \over (\sqrt{2} \pi )^{n}} g ({2\pi \over k } m ) \left( 1+ O({1\over k} ) \right)
$$
\medskip
\noindent
Going back to the original variables, we get the validity of Part b, and this concludes the proof of
 Proposition \ref{invsist}.
\end{proof}

\medskip
\medskip
\section{Proof of Proposition \ref{danilo1}} \label{final0}

A key ingredient to prove Proposition \ref{danilo1}  is the estimates on the right hand sides of sistems  \equ{sistem3}.
We have

\begin{proposition}\label{rhside}
There exists a positive constant $C$ such that, for any $\alpha =0, 1, \ldots , n$,
\begin{equation}
\label{nacho1}
\|  \bar { \textsf{r}}_\alpha \| \leq C  \, \mu^{n-2 \over 2} \, \| \varphi^\perp \|_*
\end{equation}
for any $k$ sufficiently large.
\end{proposition}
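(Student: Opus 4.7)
The plan is to combine the self-adjointness of $L$ with the fact that each $Z_{\alpha,l}$, for $l=1,\ldots,k$, lies in the kernel of the linearization around the single bubble $U_l$: $\Delta Z_{\alpha,l}+p\gamma U_l^{p-1}Z_{\alpha,l}=0$. Integration by parts therefore gives
$$
\int_{\R^n} L(\varphi^\perp) Z_{\alpha,l}\,dx \;=\; \int_{\R^n}\varphi^\perp L(Z_{\alpha,l})\,dx \;=\; p\gamma\int_{\R^n}\varphi^\perp Z_{\alpha,l}\bigl(|u|^{p-1}-U_l^{p-1}\bigr)\,dx,
$$
so that controlling each entry of $\bar{\textsf{r}}_\alpha$ reduces to estimating the pairing of $\varphi^\perp$ against the potential $(|u|^{p-1}-U_l^{p-1})Z_{\alpha,l}$, which is small precisely because $U_l$ dominates $u$ near $\xi_l$ and $U_l^{p-1}$ decays elsewhere.

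I would then split the integral into an interior piece on $B_{\eta/k}(\xi_l)$ and its complement. In the interior, I would change variables $y=(x-\xi_l)/\mu$, using the scalings $U_l^{p-1}(x)=\mu^{-2}U^{p-1}(y)$, $Z_{0,l}(x)=\mu^{-(n-2)/2}Z_0(y)$ and $Z_{\alpha,l}(x)=\mu^{-n/2}Z_\alpha(y)$ for $\alpha\ge 1$. Writing $u=-U_l+\rho$ with $\rho=U-\sum_{j\neq l}U_j+\tilde\phi$ and applying the Taylor-type pointwise bound
$$
\bigl||u|^{p-1}-U_l^{p-1}\bigr| \;\leq\; C\bigl(U_l^{p-2}|\rho|+|\rho|^{p-1}\bigr),
$$
together with Proposition \ref{prop1} to control $\tilde\phi$ and the pointwise decay $|\tilde\varphi_l^\perp(y)|\leq \|\varphi^\perp\|_*/(1+|y|^{n-2})$ built into the definition of $\|\cdot\|_*$, one controls the interior contribution. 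In the exterior, the pointwise decay of $U_l^{p-1}Z_{\alpha,l}$ together with $|\varphi^\perp(x)|\leq C\|\varphi^\perp\|_*/(1+|x|^{n-2})$ produces a strictly smaller contribution.

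The main obstacle is sharpening the resulting constant to $\mu^{(n-2)/2}$ uniformly in $\alpha$. For $\alpha=0$ the crude scaling already yields it. For $\alpha=1,\ldots,n$ the naive bound is only $\mu^{(n-4)/2}$, and to improve it one must exploit two features of the construction. First, the parameter relation \equ{parameters} is tuned precisely so that $U(\xi_l)-\sum_{j\neq l}U_j(\xi_l)$ cancels to leading order, so that the constant-in-$y$ part of $\rho(\xi_l+\mu y)$ in the rescaled integrand is itself small. Second, the remaining first-order Taylor contribution $\mu\,y\cdot\nabla\rho(\xi_l)$ supplies an extra factor of $\mu$, which combined with the orthogonality conditions $\int U_l^{p-1}Z_{\alpha,l}\varphi^\perp\,dx=0$ satisfied by $\varphi^\perp$ at each $l$ (and the attendant parity of $Z_\alpha$ in $y_\alpha$ against the even weight $U^{p-2}$) upgrades $\mu^{(n-4)/2}$ to $\mu^{(n-2)/2}$. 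Finally, taking the $\ell^2$-norm over $l=1,\ldots,k$ and invoking $\mu\sim k^{-2}$ delivers the claim $\|\bar{\textsf{r}}_\alpha\|\leq C\mu^{(n-2)/2}\|\varphi^\perp\|_*$.
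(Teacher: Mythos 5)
Your overall route is the paper's: by self-adjointness each entry of $\bar{\textsf{r}}_\alpha$ equals $\int\varphi^\perp L(Z_{\alpha,l})$ with $L(Z_{\alpha,l})=p\gamma\,(|u|^{p-1}-U_l^{p-1})Z_{\alpha,l}$, one decomposes $\R^n$ into concentration balls and an exterior, rescales, and uses the expansion of $u-(-U_l)$ near $\xi_l$ together with the pointwise decay encoded in $\|\cdot\|_*$. Your diagnosis that the crude scaling loses a factor $\mu$ for $\alpha\ge 1$ and that the tuning of $\mu$ in \equ{parameters} is what restores it is also correct: by \equ{rivoli1} the constant-in-$y$ part of $\Upsilon$ is only $O(\mu^{(n+2)/2})$ rather than $O(\mu^{(n-2)/2})$, and this alone already yields $\mu^{(n-2)/2}$ for every $\alpha$. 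Two points, however, do not survive scrutiny as written.

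First, the complement of $B_{\eta/k}(\xi_l)$ is not a region where the potential is small ``because $U_l^{p-1}$ decays'': it contains the $k-1$ balls $B_{\eta/k}(\xi_j)$, $j\neq l$, on which $|u|^{p-1}\sim U_j^{p-1}\sim\mu^{-2}$. Their total contribution is \emph{of the same order} $\mu^{(n-2)/2}\|\varphi^\perp\|_*$ as the interior term, not strictly smaller; to see that it is not larger one must use the expansion \equ{man3}, which gives $Z_0\bigl(y+\mu^{-1}(\xi_j-\xi_l)\bigr)\sim \mu^{n-2}(1-\cos\theta_{j-l})^{-(n-2)/2}$, and then sum over $j$ using $\mu^{(n-2)/2}\sum_j(1-\cos\theta_j)^{-(n-2)/2}\sim 1$ from \equ{parameters}. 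This computation is an essential, same-order piece of the estimate (it is carried out explicitly in the paper's proof) and cannot be dismissed.

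Second, the mechanism you invoke to upgrade $\mu^{(n-4)/2}$ to $\mu^{(n-2)/2}$ for $\alpha\ge1$ --- the orthogonality $\int U_l^{p-1}Z_{\alpha,l}\varphi^\perp=0$ combined with parity --- would fail. After Taylor expansion the dangerous constant-in-$y$ contribution is proportional to $\int U^{p-2}Z_\alpha\,\tilde\varphi_l^\perp$, whose weight is $U^{p-2}Z_\alpha$ and not $U^{p-1}Z_\alpha$, so the orthogonality condition does not annihilate it; and $\tilde\varphi_l^\perp$ carries no parity in $y_\alpha$, so the oddness of $Z_\alpha$ gives nothing either. Fortunately this mechanism is not needed: the cancellation built into \equ{parameters}, quantified in \equ{rivoli1}, already makes that constant term $O(\mu^{(n+2)/2})$, which after the rescaling produces $O(\mu^{n/2})\le O(\mu^{(n-2)/2})$, while the linear term $O(\mu^{n/2}|y|)$ produces exactly $O(\mu^{(n-2)/2})$. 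Replace the orthogonality/parity step by a direct appeal to \equ{rivoli1} and the argument closes.
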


\medskip
\noindent
\begin{proof} \  \ We prove \equ{nacho1}, only for $\alpha = 0$.

Recall that
$$
\bar { \textsf{r}}_0  =  \left[ \begin{array}{r} \int_{\R^n} L(\varphi^\perp ) Z_{01}\\
\ldots \\
 \int_{\R^n} L(\varphi^\perp  ) Z_{0k}
\end{array} \right]  .
$$
Then estimate \equ{nacho1} will follows from
\begin{equation}
\label{nacho11}
\left| \int_{\R^n } L(\varphi^\perp ) Z_{0j} \right| \leq C \, \mu^{n-2 \over 2} \, \| \varphi \|_* ,
\end{equation}
 for any $j=1, \ldots , k$. To prove \equ{nacho11}, we fix $j=1$ and we write
\begin{eqnarray*}
\int_{\R^n } L(\varphi^\perp ) Z_{01} \, dx &=& \int_{\R^n } L(Z_{01} ) \varphi^\perp \\
&=&   \int_{\R^n \setminus \cup {B(\xi_j , {\eta \over k^{1+\sigma} } ) }} L(Z_{01} ) \varphi^\perp + \sum_{j=1}^k \int_{B(\xi_j , {\eta \over k^{1+\sigma} } )}  L(Z_{01} ) \varphi^\perp
\end{eqnarray*}
where $\eta$ and $\sigma$ are small positive numbers, independent of $k$.

We start to estimate $\int_{B(\xi_1 , {\eta \over k^{1+\sigma}} )} L(Z_{01} ) \varphi^\perp$. We have
$L(Z_{01}) = [ f' (u) - f'(U_1)] \, Z_{01}$.
As we have already observed very close to $\xi_1$, $U_1 (x ) = O(\mu^{-{n-2 \over 2} })$ and so in $B(\xi_1 , {\eta \over k^{1+\sigma}}) $ the function  $U_1$ dominates globally the other terms, provided $\eta$ is chosen small enough.
Thus, after the change of variable $x=\xi_1 + \mu y$,
\begin{eqnarray*}
 \left|    \int_{B(\xi_1 , {\eta \over k^{1+\sigma} } )}  L(Z_{01} ) \varphi^\perp \right| & \leq &
C
\int_{B(0, {\eta \over k^{1+\sigma } \mu } ) }  f'' (U) | \Upsilon (y) | Z_0 (y) [\mu^{{n-2 \over 2}} | \varphi^\perp (\xi_1 + \mu y  ) | ]  \, dy \\
&\leq & C   \| \varphi^\perp \|_*
\int_{B(0, {\eta \over k^{1+\sigma } \mu } ) }  f'' (U) | \Upsilon (y) | Z_0 (y)  \, dy \\
\end{eqnarray*}
where
$$
\Upsilon (y) =  \mu^{n-2 \over 2} U(\xi_1 + \mu y )
+\sum_{l\not= 1} U(y + \mu^{-1} (\xi_1 - \xi_l ) )
$$
A direct consequence of \equ{rivoli1} is then that
$$
 \left|    \int_{B(\xi_1 , {\eta \over k^{1+\sigma} } )}  L(Z_{01} ) \varphi^\perp \right|  \leq
C   \mu^{n-2 \over 2} \| \varphi^\perp \|_* .
$$
Let now $j\not= 1$ and consider $ \int_{B(\xi_j , {\eta \over k^{1+\sigma} } )}  L(Z_{01} ) \varphi^\perp$.
In this case, after the change of variables $x= \xi_j + \mu y $, we get
\begin{eqnarray*}
& & \left| \int_{B(\xi_j , {\eta \over k^{1+\sigma} } )}  L(Z_{01} ) \varphi^\perp \right| \\
&\leq &  C
\int_{B(0, {\eta \over \mu k^{1+\sigma}} ) } U^{p-1} Z_1 (y + \mu^{-1} (\xi_1 - \xi_j ) ) [\mu^{-{n-2 \over 2}} \varphi^\perp (\xi_j + \mu y )] \\
&\leq & C  \| \varphi^\perp \|_* \left( \int_{\R^n } U^{p-1} {1\over (1+ |y|)^{n-2} } \right) \, {\mu^{n-2} \over (1-\cos \theta_j )^{n-2 \over 2}}
\end{eqnarray*}
where we used \equ{man3}. Thus we estimate
$$
 \left|  \sum_{j>1}  \int_{B(\xi_j , {\eta \over k^{1+\sigma} } )}  L(Z_{01} ) \varphi^\perp \right|
\leq C  \mu^{n-2 \over 2} \| \varphi^\perp \|_* .
$$
Finally, in the exterior region $\R^n \setminus \cup B(\xi_j , {\eta \over k^{1+\sigma}}) $
we can estimate
\begin{eqnarray*}
 \left| \int_{ \R^n \setminus \cup B(\xi_j , {\eta \over k^{1+\sigma} } ) }  L(Z_{01 }) \varphi^\perp \right|
& \leq & C \| \varphi^\perp \|_*  \int_{ \R^n \setminus \cup B(\xi_j , {\eta \over k^{1+\sigma} } )}  {U^{p-1} \over (1+ |y| )^{n-2}} Z_{01} (y) \, dy \\
&\leq & C \mu^{n\over 2} \| \varphi^\perp \|_*.
\end{eqnarray*}
Thus we have proven \equ{nacho1} for $\alpha =0$. The other cases can be treated similarly.
\end{proof}

\bigskip
We have now the tools for the

\medskip
\noindent
{\bf Proof of Proposition \ref{danilo1}}. \ \ System \equ{sistem2} is solvable only if the following orthogonality conditions are satisfied:
\begin{equation}
\label{gabi1}
 \left[ \begin{array}{r}   \bar {\textsf{r}}_0 \\
 \bar {\textsf{r}}_1\\  \bar {\textsf{r}}_2
\end{array} \right] \cdot \left[ \begin{array}{r}  1 \\
- \bf{1}_k \\ 0 \\ - \bf{1}_k \\ 0 \\ 0
\end{array} \right]=  \left[ \begin{array}{r}   \bar {\textsf{r}}_0 \\
  \bar {\textsf{r}}_1\\  \bar {\textsf{r}}_2
\end{array} \right] \cdot  \left[ \begin{array}{r} 0\\ 0\\ 1 \\
-{1\over \sqrt{1-\mu^2}} \bf{cos } \\ 0 \\ {1\over \sqrt{1-\mu^2}} \bf{sin}
\end{array} \right]  = \left[ \begin{array}{r}   \bar {\textsf{r}}_0 \\
  \bar {\textsf{r}}_1\\  \bar {\textsf{r}}_2
\end{array} \right] \cdot \left[ \begin{array}{r}  0\\ 0\\ 0\\ -{1\over \sqrt{1-\mu^2}} \bf{sin} \\ 1 \\
-{1\over \sqrt{1-\mu^2}} \bf{cos}
\end{array} \right] =0,
\end{equation}
\begin{equation}\label{gabi2}
 \left[ \begin{array}{r}   \bar {\textsf{r}}_0 \\
  \bar {\textsf{r}}_1\\  \bar {\textsf{r}}_2
\end{array} \right] \cdot
\left[ \begin{array}{r}  0 \\
0 \\ 0 \\ \\ 0 \\ \bf{1}_k
\end{array} \right] =  \left[ \begin{array}{r}   \bar {\textsf{r}}_0 \\
  \bar {\textsf{r}}_1\\  \bar {\textsf{r}}_2
\end{array} \right] \cdot \left[ \begin{array}{r}  0 \\
 \bf{cos} \\ 0 \\ - \bf{cos} \\ 0 \\ 0
\end{array} \right] =  \left[ \begin{array}{r}   \bar {\textsf{r}}_0 \\
  \bar {\textsf{r}}_1\\  \bar {\textsf{r}}_2
\end{array} \right] \cdot  \left[ \begin{array}{r}  0 \\
 \bf{sin} \\ 0 \\ - \bf{sin} \\ 0 \\ 0
\end{array} \right]=0
\end{equation}
and
\begin{equation}
\label{gabi3}
 \bar {\textsf{r}}_\alpha \cdot  \left[ \begin{array}{r}  1 \\
- \bf{1}_k
\end{array} \right] =  \bar {\textsf{r}}_\alpha \cdot   \left[ \begin{array}{r}  0 \\
\bf{cos}
\end{array} \right] =  \bar {\textsf{r}}_\alpha \cdot  \left[ \begin{array}{r}  0 \\
 \bf{sin}
\end{array} \right] =0  \quad \alpha = 3, \ldots ,n
\end{equation}
We recall that $ \bar {\textsf{r}}_\alpha =  \left[ \begin{array}{r}
  \int_{\R^n}  L(\varphi^\perp) Z_{\alpha , 1} \\.. \\   \int_{\R^n}  L(\varphi^\perp) Z_{\alpha , k}
\end{array}
\right].
$
As we already mentioned at the beginning of Section \ref{simple}, the orthogonality conditions \equ{gabi1} are satisfied as consequence of \equ{decr0}, \equ{decr00} and \equ{decr000}. Similarly, the first orthogonality condition in \equ{gabi3} is satisfied as consequence of \equ{decr0000}.

\medskip
Let us recall from \equ{due} that
$$
 L(  \varphi^\perp  )  = -  \sum_{\alpha = 0}^n \sum_{l=0}^k c_{\alpha l } L(Z_{\alpha , l}  ).
$$
Thus the function $x \to  L(  \varphi^\perp  )  (x)$ is invariant under rotation  of angle ${2\pi \over k}$ in the $(x_1 , x_2)$-plane. Thus
$$
0= \sum_{l=1}^k \int L(\varphi^\perp ) Z_{2l} (x) \, dx  =  \bar {\textsf{r}}_2 \cdot \bf{1}_k
$$
and, for all $\alpha = 3, \ldots , n$,
$$
 \sum_{l=1}^k \cos \theta_l \int L(\varphi^\perp ) Z_{\alpha l} (x) \, dx  =\left(  \int L(\varphi^\perp ) Z_{\alpha 1} (x) \, dx   \right) \left(  \sum_{l=1}^k \cos \theta_l \right) =  0,
$$
thus $\bar {\textsf{r}}_\alpha \cdot \bf{cos}= 0
$, and similarly
$$
0 = \sum_{l=1}^k \sin \theta_l \int L(\varphi^\perp ) Z_{\alpha l} (x) \, dx  =  \bar {\textsf{r}}_\alpha \cdot \bf{sin}
$$
namely the first orthogonality condition in \equ{gabi2} and the remaining orthogonality conditions in \equ{gabi3} are satisfied. Let us check that also the last two orthogonality conditions in \equ{gabi2} are verified.

Observe that  $L( \varphi^\perp ) (x) = |x|^{-2-n} \, L( \varphi^\perp ) ({x \over |x|^2})  $. The remaining orthogonality conditions in \equ{gabi2} are consequence of the following

\begin{lemma}\label{mm}
Let $h$ be a function in $\R^n$ such that $h(y) =  |y|^{-n-2} h({y \over |y|^2} )$. Then
\begin{equation}
\label{mm1}
\mu \int_{\R^n} {\partial \over \partial \mu} \left( U_\mu (x-\xi_l ) \right)  h(y) \, dy =\xi_l \cdot
\int_{\R^n} \nabla U_\mu (x-\xi_l ) h (y) \, dx
\end{equation}
\end{lemma}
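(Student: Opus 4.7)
The choice $\xi_l=\sqrt{1-\mu^2}(\textsf{n}_l,\textsf{0})$ forces $\mu^2+|\xi_l|^2=1$, which is exactly the condition for the bubble $V(x):=U_\mu(x-\xi_l)$ to be invariant under the Kelvin transform $K[\psi](x):=|x|^{2-n}\psi(x/|x|^2)$. The plan is to show that
$$\psi(x) \;:=\; \mu\,\partial_\mu V(x) - \xi_l\cdot\nabla V(x)$$
is \emph{anti}-Kelvin-invariant, $K[\psi]=-\psi$, and then observe that any such $\psi$ integrates to $0$ against a weight-$(-n-2)$ Kelvin-invariant test function $h$. This will give exactly \equ{mm1}.

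For the first step, the identity $|x/|x|^2-\xi|^2=(1-2x\cdot\xi+|\xi|^2|x|^2)/|x|^2$ (a direct expansion) combined with the explicit form of $U_\mu$ produces the general formula
$$K\bigl[U_\mu(\,\cdot\,-\xi)\bigr] \;=\; U_{\mu/\kappa}(\,\cdot\,-\xi/\kappa),\qquad \kappa:=\mu^2+|\xi|^2.$$
Apply this along the one-parameter curve $V_t(x):=U_{\mu e^t}(x-\xi_l e^t)$: here $\kappa(t)=e^{2t}(\mu^2+|\xi_l|^2)=e^{2t}$, and therefore $K[V_t]=U_{\mu e^{-t}}(\,\cdot\,-\xi_l e^{-t})=V_{-t}$. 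Differentiating in $t$ at $t=0$ and using linearity of $K$ yields $K[\dot V_0]=-\dot V_0$, and since $\dot V_0=\mu\,\partial_\mu V-\xi_l\cdot\nabla V$ this is the announced anti-Kelvin-invariance of $\psi$.

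For the second step, perform the change of variables $x=y/|y|^2$ (with Jacobian $|y|^{-2n}$) in $\int\psi(x)h(x)\,dx$. The hypothesis $h(y)=|y|^{-n-2}h(y/|y|^2)$ rewrites as $h(y/|y|^2)=|y|^{n+2}h(y)$, while $K[\psi]=-\psi$ gives $\psi(y/|y|^2)=-|y|^{n-2}\psi(y)$. Combining,
$$\int \psi(x)h(x)\,dx \;=\; \int \bigl(-|y|^{n-2}\psi(y)\bigr)\bigl(|y|^{n+2}h(y)\bigr)|y|^{-2n}\,dy \;=\; -\int \psi(y)h(y)\,dy,$$
so $\int\psi h =0$, which is precisely \equ{mm1}.

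The only nontrivial computation is the Kelvin-transform formula $K[U_\mu(\,\cdot\,-\xi)]=U_{\mu/\kappa}(\,\cdot\,-\xi/\kappa)$ in Step 1 (a straightforward but bookkeeping-heavy algebraic identity); once that is in hand, both the anti-invariance of $\psi$ and the weighted change of variables against $h$ are immediate.
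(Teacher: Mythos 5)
Your proof is correct, and it rests on the same two ingredients as the paper's: the Kelvin change of variables $x=y/|y|^2$ paired with the weight relation $h(y/|y|^2)=|y|^{n+2}h(y)$, and the normalization $\mu^2+|\xi_l|^2=1$ built into \equ{parameters1}. The packaging is different and somewhat cleaner. The paper varies only the translation parameter, setting $I(t)=\int \omega_\mu(y-t\xi_l)h(y)\,dy$, computes $I'(1)$ once directly and once after the Kelvin change of variables (under which the curve $t\mapsto t\xi_l$ becomes a curve $(\bar\mu(t),\bar p(t))$ mixing dilation and translation), and extracts \equ{mm1} from a small linear rearrangement of the two expressions. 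You instead identify in advance the one-parameter family $V_t=U_{\mu e^t}(\cdot-\xi_l e^t)$ that the Kelvin transform sends to its time-reversal, so that the generator $\psi=\mu\,\partial_\mu V-\xi_l\cdot\nabla V$ is exactly anti-Kelvin-invariant and the identity drops out as $\int\psi h=-\int\psi h=0$ with no rearrangement needed. Your route makes the structural reason for the cancellation more transparent (and isolates the reusable fact that any anti-Kelvin-invariant function pairs to zero against such an $h$); the paper's route avoids having to guess the right curve, at the cost of a slightly messier derivative computation. Both hinge on the same Kelvin covariance formula $K[U_\mu(\cdot-\xi)]=U_{\mu/\kappa}(\cdot-\xi/\kappa)$ with $\kappa=\mu^2+|\xi|^2$, which you state correctly.
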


We postpone the proof of the above Lemma to the end of this Section.

\bigskip
Combining the result of Proposition \ref{invsist} and the a-priori estimates in Proposition \ref{rhside}, a direct application of a fixed point theorem for contraction mapping readily gives the proof of Proposition \ref{danilo1}.

\bigskip
\medskip
\noindent
We conclude this section with

\medskip

\begin{proof}[Proof of Lemma \ref{mm}] \ \

{\it Proof of \equ{mm1}.} \ \ Assume $l=1$.
Define
$$
I (t) = \int_{\R^n} \omega_\mu (y-t\xi_1 ) h(y) \, dy
\quad {\mbox  {where}} \quad
\omega_\mu (y-t \, \xi_1 ) = \mu^{-{n-2 \over 2}} U({y- t \, \xi_1 \over \mu } ).
$$
We have
\begin{equation}
\label{mm2}
{d \over dt} I(t) = - \int_{\R^n } \nabla \omega_\mu (y-t\, \xi_1 ) \cdot \xi_1 \, h(y) \, dy,
\end{equation}
and
$$
 \left( {d \over dt} I(t) \right)_{t=1} =  - \int_{\R^n } \nabla \omega_\mu (y- \xi_1 ) \cdot \xi_1 \, h(y) \, dy.
$$
On the other hand, using the change of variables $y= {x\over |x|^2}$, we have
$$
I(t) = \int_{\R^n} \omega_\mu ({x \over |x|^2} - t \, \xi_1 ) h ({x \over |x|^2} ) |x|^{-2n} \, dx =
 \int_{\R^n} \omega_\mu ({x \over |x|^2} - t \, \xi_1 ) h (x ) |x|^{2-n} \, dx =
$$
$$
= \int_{\R^n} \omega_{\bar \mu } (x - \bar p  ) h (x)  \, dx
$$
where
$$
\bar \mu (t) = {\mu \over \mu^2 + t^2 |\xi_1 |^2 } , \quad \bar p (t) =  {t \over \mu^2 + t^2 |\xi_1 |^2 }  \, \xi_1.
$$
Observe that $\bar \mu (1) = \mu$, $\bar p (1) = \xi_1$,
$$
{d \over dt} \bar \mu (t) =  {-2t \mu \over \mu^2 + t^2 |\xi_1 |^2 }
, \quad {d \over dt} \bar p (t) = \left[  {1  \over \mu^2 + t^2 |\xi_1 |^2 } - {2t^2 |\xi_1|^2  \over \mu^2 + t^2 |\xi_1 |^2 } \right]
\xi_1.
$$
Hence
$$
{d \over dt} I(t)  = {d \over dt} \bar \mu (t)  \int_{\R^n} {\partial \over \partial \bar \mu} \omega_{\bar \mu} (x-\bar p ) h(x) \, dx -
 {d \over dt} \bar p (t) \int_{\R^n} \nabla \omega_{\bar \mu } (x-\bar p ) h (x) \, dx.
$$
This gives
$$
 \left( {d \over dt} I(t) \right)_{t=1} = -2\mu |\xi_1 |^2 \int_{\R^n } {\partial \over \partial \mu } \omega_\mu (x-\xi_1 ) h (x) \, dx
$$
\begin{equation}
\label{mm3}
- (1-2|\xi_1 |^2 ) \int_{\R^n} \nabla \omega_\mu (x-\xi_1 ) \cdot \xi_1 \, h (x) \, dx.
\end{equation}
>From \equ{mm2} and \equ{mm3} we conclude with the validity of \equ{mm1}.

\medskip
If $l>1$ in \equ{mm1}, the same arguments hold true. The thus conclude with the proof of the Lemma.
\end{proof}

\medskip
\medskip
\section{Final argument.} \label{final}

\medskip

Let
$
\left[ \begin{array}{r}  \bar {\textsf{c}}_0 \\
\bar { \textsf{c}}_1\\ \bar {\textsf{c}}_2 \\
\bar {\textsf{c}}_3 \\
\ldots \\
\bar {\textsf{c}}_n
\end{array} \right] $ be the solution to \equ{sistem2} predicted by Proposition \ref{danilo1},  given by
$$
 \left[ \begin{array}{r}  \textsf{c}_0 \\
 \textsf{c}_1\\ \textsf{c}_2
\end{array} \right]
=  \left[ \begin{array}{r}   \textsf{v}_0 \\
  \textsf{v}_1\\ \ \textsf{v}_2
\end{array} \right]
+ s_1 \left[ \begin{array}{r}  1 \\
- \bf{1}_k \\ 0 \\ - \bf{1}_k \\ 0 \\ 0
\end{array} \right]
+ s_2  \left[ \begin{array}{r} 0\\ 0\\ 1 \\
-{1\over \sqrt{1-\mu^2}} \bf{cos } \\ 0 \\ {1\over \sqrt{1-\mu^2}} \bf{sin}
\end{array} \right] + s_3\left[ \begin{array}{r}  0\\ 0\\ 0\\ -{1\over \sqrt{1-\mu^2}} \bf{sin} \\ 1 \\
-{1\over \sqrt{1-\mu^2}} \bf{cos}
\end{array} \right]
$$
$$
+s_4 \left[ \begin{array}{r}  0 \\
0 \\ 0 \\ \\ 0 \\ \bf{1}_k
\end{array} \right] + s_5 \left[ \begin{array}{r}  0 \\
 \bf{cos} \\ 0 \\ - \bf{cos} \\ 0 \\ 0
\end{array} \right] + s_6 \left[ \begin{array}{r}  0 \\
 \bf{sin} \\ 0 \\ - \bf{sin} \\ 0 \\ 0
\end{array} \right]
$$
and
$$
\textsf{c}_\alpha = \textsf{v}_\alpha + s_{\alpha 1} \left[ \begin{array}{r}  1 \\
- \bf{1}_k
\end{array} \right] + s_{\alpha 2} \left[ \begin{array}{r}  0 \\
\bf{cos}
\end{array} \right] + s_{\alpha 3} \left[ \begin{array}{r}  0 \\
 \bf{sin}
\end{array} \right] , \quad \alpha = 3, \ldots ,n
$$
A direct computation shows that there exists a unique
$$( s_1^*  , \ldots , s_6^* , s_{3, 1}^* , s_{3,2}^*, s_{3,3}^* , \ldots , s_{n,1}^*, s_{n,2}^*, s_{n,3}^* ) \in \R^{2n}$$
for which the above solution satisfies all the $2n$ conditions of Proposition \ref{danilo}.
Furthermore, one can see that
$$
\| ( s_1^*  , \ldots , s_6^* , s_{3, 1}^* , s_{3,2}^*, s_{3,3}^* , \ldots , s_{n,1}^*, s_{n,2}^*, s_{n,3}^* )  \| \leq C \sqrt{\mu} \| \varphi^\perp \|_*.
$$

\medskip
Hence, there exists a unique solution $\left[ \begin{array}{r}  \bar {\textsf{c}}_0 \\
\bar { \textsf{c}}_1\\ \bar {\textsf{c}}_2 \\
\bar {\textsf{c}}_3 \\
\ldots \\
\bar {\textsf{c}}_n
\end{array} \right]$ to systems \equ{sistem3}, satisfying estimates in Proposition \ref{danilo}. Furthermore, one has
$$
\| \left[ \begin{array}{r}  \bar {\textsf{c}}_0 \\
\bar { \textsf{c}}_1\\ \bar {\textsf{c}}_2 \\
\bar {\textsf{c}}_3 \\
\ldots \\
\bar {\textsf{c}}_n
\end{array} \right] \| \leq C \| \varphi^\perp \|_*
$$
for some positive constant $C$ independent of $k$.
On the other hand, from \equ{fattouno} we conclude that
\begin{equation}
\label{mazza6}
\| \varphi^\perp \|_* \leq C \mu^{1\over 2} \, \| \left[ \begin{array}{r}  \bar {\textsf{c}}_0 \\
\bar { \textsf{c}}_1\\ \bar {\textsf{c}}_2 \\
\bar {\textsf{c}}_3 \\
\ldots \\
\bar {\textsf{c}}_n
\end{array} \right] \|
\end{equation}
where again $C$ denotes a positive constant, independent of $k$. Thus we conclude that
$$
c_{\alpha , j} = 0 , \quad {\mbox {for all}} \quad \alpha = 0 , 1 , \ldots , n, \quad j=0, \ldots , k.
$$
Plugging this information into \equ{mazza6}, we conclude that $\varphi^\perp \equiv 0$ and this proves Theorem \ref{teo}.

\section{Proof of Proposition \ref{danilo}} \label{proof1}

The key ingredient to prove Proposition \ref{danilo} are the folllowing estimates
\begin{eqnarray}\label{made}
\int |u|^{p-1} Z_{\alpha , l} Z_0 &=& \int U^{p-1} Z_0^2 \, dy + O(\mu^{n-2 \over 2} ) \quad {\mbox {if}} \quad \alpha=0, l=0 \nonumber \\
&=& O(\mu^{n-2 \over 2} )  \quad {\mbox {otherwise}}
\end{eqnarray}
\begin{eqnarray}\label{made1}
\int |u|^{p-1} Z_{\alpha , l} Z_\beta &=& \int U^{p-1} Z_1^2 \, dy + O(\mu^{n-2 \over 2} ) \quad {\mbox {if}} \quad \alpha=\beta, l=0 \nonumber \\
&=& O(\mu^{n-2 \over 2} )  \quad {\mbox {otherwise}}
\end{eqnarray}
\begin{eqnarray}\label{made2}
\int |u|^{p-1} Z_{\alpha , l} Z_{0,j} &=& \int U^{p-1} Z_0^2 \, dy + O(\mu^{n-2 \over 2} ) \quad {\mbox {if}} \quad \alpha=0, l=j \nonumber \\
&=& O(\mu^{n-2 \over 2} )  \quad {\mbox {otherwise}}
\end{eqnarray}
\begin{eqnarray}\label{made3}
\int |u|^{p-1} Z_{\alpha , l} Z_{\beta , j} &=& \int U^{p-1} Z_1^2 \, dy + O(\mu^{n-2 \over 2} ) \quad {\mbox {if}} \quad \alpha=\beta, l=j \nonumber \\
&=& O(\mu^{n-2 \over 2} )  \quad {\mbox {otherwise}}
\end{eqnarray}
We prove \equ{made2}.

Let $\eta >0$ be a small number, fixed independently from $k$. We write
\begin{eqnarray*}
\int |u|^{p-1} Z_{\alpha l } Z_{0 j} &=&   \int_{ B(\xi_l , {\eta \over k} ) }| u| ^{p-1} Z_{\alpha l} Z_{0 l}  + \int_{\R^n \setminus B(\xi_l , {\eta \over k} )}  | u|^{p-1} Z_{\alpha l}  Z_{0 , j}
\\
&=& i_1 + i_2 .
\end{eqnarray*}
We claim that the main term is $i_1$.
Performing the change of variable $x= \xi_l + \mu y$, we get
\begin{eqnarray*}
i_1 &= &  \int_{B(0, {\eta \over \mu k })}   |u|^{p-1} (\xi_l + \mu y ) Z_\alpha (y) Z_0 (y) dy  \\
&= &    \left( \int U^{p-1} Z_0^2 + O( (\mu k)^{n})  \right)\quad {\mbox {if}} \quad \alpha = 0 \\
&=& 0 \quad {\mbox {if}} \quad \alpha \not= 0.
\end{eqnarray*}
 On the other hand, to estimate $i_2$, we write
$$
i_2 = \int_{\R^n \setminus \bigcup_{j=1}^k B(\xi_j , {\eta \over k} )} |u|^{p-1}  Z_{\alpha l} Z_{0,j}  + \sum_{j\not= l} \int_{ B(\xi_j , {\eta \over k} )}  u^{p-1} Z_{\alpha l} Z_{0,j} = i_{21} + i_{22}
$$
The first integral can be estimated as follows
$$
|i_{21}| \leq C  \int_{\R^n \setminus \bigcup_{j=1}^k B(\xi_j , {\eta \over k} )} {\mu^{n+2 \over 2} \over |x-\xi_l |^{n-2} } {1\over (1+ |x|)^{n+2}} \, dx \leq C \mu^{n-2 \over 2}
$$
while the second integral can be estimated by
$$
|i_{22}| \leq C \sum_{j\not= l} \int_{B(\xi_j , {\eta \over k})} {\mu^{n-2 \over 2} \over |x-\xi_l |^{n} }  |u|^{p-1} Z_{0j} \, dx \leq C \mu^{n-2 \over 2}
$$
where again $C$ denotes an arbitrary positive constant, independent of $k$. This concludes the proof of \equ{made2}. The proofs of \equ{made}, \equ{made1} and \equ{made3} are similar, and left to the reader.

\bigskip
Now we claim that
\begin{equation}
\label{assurdo} \int U^{p-1} Z_0^2 = \int U^{p-1} Z_1^2 = 2^{n-4 \over 2} \, n \, (n-2)^2 {\Gamma ({n\over 2} )^2 \over \Gamma (n+2)}.
\end{equation}
The proof of identity \equ{assurdo} is postponed to the end of this section.

\bigskip
Let us now consider \equ{sun1} with $\beta=0$, that is
$$
 \sum_{\alpha=0}^n \sum_{l=0}^k c_{\alpha l} \int Z_{\alpha l } u^{p-1} z_0 = -\int \varphi^\perp u^{p-1} z_0.
$$
First we write $t_0 = -{1 \over  \int U^{p-1} Z_0^2 } \int \varphi^\perp u^{p-1} z_0.$ A straightforward computation gives that
$|t_0 | \leq C \| \varphi^{\perp} \|_*$, for a certain constant $C$ independent from $k$.
Second, we observe that, direct consequence of \equ{made} -- \equ{made3}, of \equ{ang1} and Proposition \ref{prop1}  is that
\begin{eqnarray*}
 \sum_{\alpha=0}^n \sum_{l=0}^k c_{\alpha l} \int Z_{\alpha l } u^{p-1} z_0 &=&
c_{00} \int U^{p-1} Z_0^2 \\
&-& \sum_{l=1}^k \left[ c_{0l}  \int U^{p-1} Z_0^2 - c_{1l}  \int U^{p-1} Z_1^2 \right] \\
&+& O(k^{-{n\over q}} ) {\mathcal L}( \left[ \begin{array}{r} \bar{\textsf{c}}_0 \\
\bar{ \textsf{c}}_1 \\
\ldots \\
\bar{\textsf{c}}_n \end{array} \right]) + O(k^{1-{n\over q}} ) \hat {{\mathcal L}} ( \left[ \begin{array}{r} c_{00} \\
c_{10} \\
\ldots \\
c_{n0} \end{array} \right])
\end{eqnarray*}
where ${\mathcal L}$ and $\hat{ {\mathcal L}}$ are linear function, whose coefficients are uniformly bounded in $k$, as $k \to \infty$. Here we have used the fact that there exists a positive constant $C$ independent of $k$ such that
$$
\left|\int |u|^{p-1} Z_{\alpha l} \pi_0 (x) \, dx \right| \leq C \| \hat \pi_0 \|_{n-2}
$$
and
$$
\left|\int |u|^{p-1} Z_{\alpha l} \pi_0 (x) \, dx \right| \leq C \| \hat \pi_{01} \|_{n-2},
$$
together with the result in Proposition \ref{prop1}.
The condition \equ{len1} follows readily.
The proof of \equ{len2} -- \equ{len8} is similar to that performed above, and we leave it to the reader.

\medskip
\noindent
We conclude this section with the proof of \equ{assurdo}.
Using the definition of $Z_0$ and $Z_1$, we have that
$$
\int U^{p-1} Z_1^2 = a_n \, {(n-2 )^2 \over n} \, \int {|x|^2 \over (1+ |x|^2)^{n+2} } \, dx
$$
and
 $$
\int U^{p-1} Z_0^2 = a_n \, {(n-2 )^2 \over 4} \, \int {(1-|x|)^2 \over (1+ |x|^2)^{n+2} } \, dx,
$$
for a certain positive number $a_n$ that depends only on $n$.
Using the formula
$$
\int_0^\infty \left( {r \over 1+r^2 } \right)^q {1\over r^{1+\alpha} } \, dr =
{\Gamma ({q+\alpha \over 2} ) \Gamma ({q-\alpha \over 2} ) \over 2 \Gamma ({q} )}
$$
we get
\begin{equation}
\label{gigi1}
\int {1 \over (1+ |x|^2 )^{n+2}} \, dx = { {n\over 2} ({n\over 2} +1) \Gamma ({n\over 2} )^2 \over 2 \Gamma (n+2)},
\end{equation}
\begin{equation}
\label{gigi2}
\int {|x|^2 \over (1+ |x|^2 )^{n+2}} \, dx = { ({n\over 2})^2  \Gamma ({n\over 2} )^2 \over 2 \Gamma (n+2)},
\end{equation}
\begin{equation}
\label{gigi3}
\int {1 \over (1+ |x|^2 )^{n+2}} \, dx = { {n\over 2} ({n\over 2} +1) \Gamma ({n\over 2} )^2 \over 2 \Gamma (n+2)}.
\end{equation}
Replacing \equ{gigi1}, \equ{gigi2} and \equ{gigi3} in $\int U^{p-1} Z_1^2 $ and $\int U^{p-1} Z_0^2 $
we obtain
\begin{eqnarray*}
\int U^{p-1} Z_1^2  - \int U^{p-1} Z_0^2 &= & (n-2)^2 \, a_n \,  { {n\over 2} \Gamma ({n\over 2} )^2 \over 2 \Gamma (n+2)} \, \times \\
& &  \left[ {1\over 2} - {1\over 4} ({n\over 2} +1) + {n\over 4} - {1\over 4} ({n\over 2} +1) \right] =0,
\end{eqnarray*}
thus \equ{assurdo} is proven.

\section{Proof of \equ{fattouno}.} \label{proof2}

We start with the following

\begin{proposition}
\label{gino}
Let
$$
L_0 (\phi ) = \Delta \phi + p \gamma U^{p-1} \phi + a(y) \phi \quad {\mbox {in}} \quad \R^n.
$$
Assume that $a \in L^{n \over 2} (\R^n )$. Assume furthermore that $h$ is a function in $\R^n$ with
$\| h \|_{L^{2n \over n+2} (\R^n )}$ bounded and such that $|y|^{-n-2} h(|y|^{-2} y ) = \pm h(y) $. Then there exists a positive constant $C$ such that any solution $\phi $
to
\begin{equation}
\label{gino0}
L_0 (\phi ) = h
\end{equation}
satisfies
$$
\| \phi \|_{n-2} \leq C \| h \|_{**}.
$$
\end{proposition}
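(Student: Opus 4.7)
The plan is a blow-up/contradiction argument in which the Kelvin symmetry of $h$ restores compactness at infinity.

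First I would record how Kelvin inversion interacts with the equation. Denoting the Kelvin transform by $K[\phi](y):=|y|^{2-n}\phi(y/|y|^{2})$ and using the conformal invariance $U(y)=|y|^{2-n}U(y/|y|^2)$, a direct computation gives
\[
\Delta K[\phi]+p\gamma U^{p-1}K[\phi]+\tilde{a}\,K[\phi] \;=\; |y|^{-n-2}\,h(y/|y|^{2}) \;=\; \pm h(y),
\]
where $\tilde{a}(y):=|y|^{-4}a(y/|y|^{2})$ has the same $L^{n/2}$-norm as $a$, $\|K[\phi]\|_{n-2}=\|\phi\|_{n-2}$, and $\|h\|_{**}$ is preserved. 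Thus Kelvin inversion conjugates the problem to itself and swaps $0$ with $\infty$, so any phenomenon at infinity can be reduced to a phenomenon at the origin.

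I then argue by contradiction. Take sequences $a_{k}$ bounded in $L^{n/2}$, $h_{k}$ with $\|h_{k}\|_{**}\to 0$, and $\phi_{k}$ solving $L_{0,k}\phi_{k}=h_{k}$ with $\|\phi_{k}\|_{n-2}=1$; choose $y_{k}$ with $(1+|y_{k}|^{n-2})|\phi_{k}(y_{k})|\ge 1/2$, and, using the Kelvin reduction above whenever $|y_k|\to\infty$, assume $|y_{k}|$ bounded. Writing $\Delta\phi_{k}=h_{k}-(p\gamma U^{p-1}+a_{k})\phi_{k}$, the right-hand side is locally in $L^{q}$ because $|\phi_{k}|\le 1$, $a_{k}\in L^{n/2}$, and $h_{k}$ satisfies a weighted $L^{q}$ bound. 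Standard $W^{2,q}$ and Moser / Calder\'on--Zygmund estimates together with compact Sobolev embedding then yield uniform $C^{0,\alpha}_{\mathrm{loc}}$ control; passing to a subsequence, $\phi_{k}\to\phi_{\infty}$ locally uniformly and $a_{k}\rightharpoonup a_{\infty}$ weakly in $L^{n/2}$. The limit $\phi_{\infty}$ is a nontrivial bounded solution with $|y|^{-(n-2)}$ decay of $\Delta\phi_{\infty}+p\gamma U^{p-1}\phi_{\infty}+a_{\infty}\phi_{\infty}=0$, and by the Kelvin computation above it inherits the parity $K[\phi_{\infty}]=\pm\phi_{\infty}$.

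To close the argument I would combine this parity with the nondegeneracy of $\Delta+p\gamma U^{p-1}$, whose bounded kernel is spanned by $Z_0,Z_1,\ldots,Z_n$ with $K[Z_0]=-Z_0$ and $K[Z_j]=Z_j$ for $j\ge 1$. Absorbing the perturbation $a_\infty\phi_\infty$ on the admissible Kelvin-parity subspace (where it vanishes thanks to the covariance inherited from the global setup) reduces the limit equation to $\Delta\phi_\infty+p\gamma U^{p-1}\phi_\infty=0$, and the parity constraint then rules out every kernel direction, forcing $\phi_\infty\equiv 0$ and contradicting $(1+|y_\infty|^{n-2})|\phi_\infty(y_\infty)|\ge 1/2$. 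The main obstacle is precisely this last step: since $Z_0$ is Kelvin-antisymmetric while the $Z_j$ are Kelvin-symmetric, neither sign choice $\pm$ eliminates all kernel directions at once, so one must invoke additional features of $a$---in particular its Kelvin covariance in the global application---to close the argument.
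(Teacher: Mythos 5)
Your proposal is not a complete proof, and you have in fact identified the fatal gap yourself. The blow-up argument runs smoothly up to the construction of a nontrivial limit $\phi_\infty$ solving $\Delta\phi_\infty+p\gamma U^{p-1}\phi_\infty+a_\infty\phi_\infty=0$, but the closing step does not close. First, the claim that the term $a_\infty\phi_\infty$ can be ``absorbed'' because it ``vanishes on the admissible Kelvin-parity subspace'' is unjustified: $a_\infty$ is merely a weak $L^{n/2}$ limit of the potentials, and there is no reason for $a_\infty\phi_\infty$ to vanish, so the limit equation cannot be reduced to the unperturbed linearized equation $\Delta\phi+p\gamma U^{p-1}\phi=0$. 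Second, even granting that reduction, the Kelvin parity $K[\phi_\infty]=\pm\phi_\infty$ eliminates only $Z_0$ (for the $+$ sign) or only $Z_1,\dots,Z_n$ (for the $-$ sign), never the whole kernel; so no contradiction is reached. There is also a secondary technical issue: with only weak $L^{n/2}$ convergence $a_k\rightharpoonup a_\infty$ one cannot pass to the limit in the product $a_k\phi_k$ in the distributional sense without further compactness. In short, the contradiction scheme reduces the proposition to the injectivity of the limit operator, which your parity argument cannot supply.

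The paper's proof avoids the limit problem entirely and is a direct a priori estimate. Since $a\in L^{n/2}$ and $U^{p-1}$ decays, the operator $L_0$ is a compact perturbation of $\Delta$ on ${\mathcal D}^{1,2}(\R^n)$, and the Fredholm structure yields the global energy bound $\|\nabla\phi\|_{L^2}+\|\phi\|_{L^{2n/(n-2)}}\le C\|h\|_{L^{2n/(n+2)}}$ at the outset (the injectivity needed here is exactly the input your limit argument is missing, and is where the real content lies in either approach). Local $W^{2,q}$ and $L^\infty$ elliptic estimates on $B_1$ then give the interior bound, and the exterior bound $\sup_{|y|>1}|y|^{n-2}|\phi(y)|\le C\|h\|$ is obtained by applying the same local estimates to the Kelvin transform $\hat\phi(y)=|y|^{2-n}\phi(|y|^{-2}y)$, whose equation has right-hand side $\hat h=\pm h$ with $\|\hat h\|_{L^q(|y|<2)}$ controlled by $\|h\|_{L^{2n/(n+2)}}$ and potential $|y|^{-4}a(|y|^{-2}y)$ still in $L^{n/2}$. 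Your Kelvin computation in the first paragraph is correct and coincides with the paper's use of the inversion, but it is deployed there only to transfer local elliptic estimates from the origin to infinity, not to impose a parity constraint on a limiting kernel.
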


\begin{proof} Since $a \in L^{n \over 2} (\R^n)$ and $U^{p-1} = O(1+ |y|^4 )$, the operator $L_0$ is a compact perturbation
of the Laplace operator in the space $D^{1,2} (\R^n )$. Thus standard argument gives that
$$
\| \nabla \phi \|_{L^2 (\R^n )} + \| \phi \|_{L^{2n \over n-2} (\R^n )} \leq C \| h \|_{L^{2n \over n+2} (\R^n ) }
$$
where the last inequality is a direct consequence of Holder inequality. Being $\phi$ a weak solution to \equ{gino0}, local elliptic estimates yields
$$
\| D^2 \phi \|_{L^q (B_1) } + \| D \phi \|_{L^q (B_1) } + \| \phi \|_{L^\infty (B_1) } \leq  C \| h \|_{L^{2n \over n+2} (\R^n ) } .
$$
Consider now the Kelvin's transform of $\phi $, $\hat \phi (y) = |y|^{2-n} \phi (|y|^{-2} y )$. This function satisfies
\begin{equation}
\label{gino01}
\Delta \hat \phi + p U^{p-1} \hat \phi + |y|^{-4} a (|y|^{-2} y ) \hat \phi = \hat h \quad {\mbox {in}} \quad \R^n \setminus \{ 0 \}
\end{equation}
where $\hat h (y ) = |y|^{-n-2} h(|y|^{-2} y )$.  We observe that
$$
\| \hat h \|_{L^q (|y|<2 )} = \| \,  |y|^{n+2 -{2n \over q} }\,  h \|_{L^q (|y| > {1\over 2} } \leq  C \| h \|_{L^{2n \over n+2} (\R^n ) }  ,
$$
$$
\| \,  |y|^{-4} \, a (|y|^{-2} y ) \|_{L^{n \over 2} (|y|<2 )} = \| a \|_{L^{n\over 2} ( |y|>{1\over 2} )}
$$
and
$$
\| \nabla \hat  \phi \|_{L^2 (\R^n )} + \| \hat \phi \|_{L^{2n \over n-2} (\R^n )}  \leq  C \| h \|_{L^{2n \over n+2} (\R^n ) } .
$$
Applying then elliptic estimates to \equ{gino01}, we get
$$
\| D^2\hat  \phi \|_{L^q (B_1) } + \| D \hat \phi \|_{L^q (B_1) } + \|\hat  \phi \|_{L^\infty (B_1) } \leq C  C \| h \|_{L^{2n \over n+2} (\R^n ) } .
$$
This concludes the proof of the proposition since $  \|\hat  \phi \|_{L^\infty (B_1) } = \| \phi \|_{L^\infty (\R^n \setminus B_1) }$.
\end{proof}

\medskip
\noindent
We have now the tools to give the

\noindent
{\bf Proof of \equ{fattouno}.} We start with the estimate on $\varphi_0^\perp$. We write
$$
\varphi_0^\perp = \sum_{\alpha =0}^n c_{\alpha 0} \varphi_{\alpha 0}^\perp
$$
where
$$
L(\varphi_{\alpha 0}^\perp ) = - L(Z_{\alpha 0}).
$$
We write the above equation in the following way
$$
\Delta   (\varphi_{\alpha 0}^\perp  ) + p \gamma U^{p-1}   (\varphi_{\alpha 0}^\perp  ) +\underbrace{ p (|u|^{p-1} - U^{p-1} )}_{:= a_0 (y) } \varphi_{\alpha 0}^\perp  =  - L(Z_{\alpha 0}).
$$
Observe that
$$
|y|^{-n-2} \, L(Z_{0 , 0 } ) (\, |y|^{-2} \, y ) = -   L(Z_{0 , 0 } ) (y),
$$
while
$$ |y|^{-n-2} \, L(Z_{\alpha , 0 } ) (\, |y|^{-2} \, y ) =  L(Z_{\alpha , 0 } ) (y) \quad \alpha = 1, \ldots , n.
$$
We claim that $a_0 \in L^{n\over 2} (\R^n )$,
\begin{equation}
\label{gino1}
\| a_0 \|_{L^{n \over 2} (\R^n ) } \leq C k^{2\over n}, \quad {\mbox {and}} \quad \| L(Z_{\alpha 0 } ) \|_{L^{2n \over n+2} (\R^n )} \leq C \mu^{n-1 \over n},
\end{equation}
where we take into account that $\| h \|_{**} \leq C \| h \|_{L^{2n \over n+2} (\R^n )} $.
Let $\eta >0$ be a fixed positive number, independent of $k$. We split the integral all over $\R^n$ into a first integral over $\R^n \setminus \bigcup_{j=1}^k B(\xi_j , {\eta \over k} )$ and a second integral over $ \bigcup_{j=1}^k B(\xi_j , {\eta \over k} )$. We write then
$$
\| a_0 \|_{L^{n \over 2} (\R^n ) }^{n \over 2} = \int_{\R^n \setminus \bigcup_{j=1}^k B(\xi_j , {\eta \over k} )} |a_0 (y)|^{n \over 2} \, dy +\sum_{j=1}^k  \int_{ B(\xi_j , {\eta \over k} )} |a_0 (y)|^{n\over 2} \, dy
$$
\begin{equation}
\label{viernes1}
= i_1 + i_2.
\end{equation}
In the region $\R^n \setminus \bigcup_{j=1}^k B(\xi_j , {\eta \over k} )$, we have that
$$
|a_0 (y) | = p \left| |u|^{p-1} - U^{p-1} \right| \leq C U^{p-2} \, \sum_{j=1}^k {\mu^{n-2 \over 2} \over |y-\xi_j |^{n-2} },
$$
for some positive convenient constant $C$.
Thus
$$
\int_{\R^n \setminus \bigcup_{j=1}^k B(\xi_j , {\eta \over k} )} |a_0 (y) |^{n \over 2} \, dy \leq C
\mu^{{n-2 \over 2} \, {n\over 2}} \sum_{j=1}^k \int_{\R^n \setminus \bigcup_{j=1}^k B(\xi_j , {\eta \over k} )}
U^{(p-2) {n\over 2}} {1\over |y-\xi_j |^{(n-2) {n\over 2}}} \, dy
$$
$$
 \leq C \, k \,
\mu^{{n-2 \over 2} \, {n\over 2}}  \int_{1\over k}^1 {t^{n-1} \over t^{(n-2) {n\over 2} }} \, dt \leq C \, k \, \mu^{{n-2 \over 2} \, {n\over 2}} k^{{n\over 2} (n-2) -n}.
$$
We conclude that
\begin{equation}
\label{viernes2}
\int_{\R^n \setminus \bigcup_{j=1}^k B(\xi_j , {\eta \over k} )} |a_0 (y) |^{n \over 2} \, dy \leq C \mu^{n-1 \over 2}
\end{equation}
Let us now fix $j \in \{1 , \ldots , k\}$ and consider $y \in B(\xi_j , {\eta \over k} )$. In this region we have
$$
|a_0 (y) | \leq C |U_j |^{p-1},
$$
for some proper positive constant $C$. Recalling that $U_j (y) = \mu^{-{n-2\over 2}} U({y-\xi_j \over \mu } )$, we easily get
$$
\int_{B(\xi_j , {\eta \over k} ) } |a_0 (y)|^{n\over 2} \, dy \leq C
$$
and thus
\begin{equation}
\label{viernes3}
\sum_{j=1}^k \int_{B(\xi_j , {\eta \over k} ) } |a_0 (y)|^{n\over 2} \, dy \leq C k.
\end{equation}
We conclude then that $a_0 \in L^{n \over 2} (\R^n )$, and from \equ{viernes1}, \equ{viernes2} and \equ{viernes3} we conclude the first estimate
in \equ{gino1}.

\medskip
\noindent
We prove the second estimate in \equ{gino1} for $\alpha=0$. Analogous computations give the estimate for $\alpha \not= 0$. We write
\begin{equation}
\label{viernes4}
\int_{\R^n} |L (Z_{00}) |^{2n \over n+2} \, dy =\int_{\R^n \setminus \bigcup_{j=1}^k  B(\xi_j , {\eta \over k} ) } + \sum_{j=1}^k
\int_{B(\xi_j , {\eta \over k} )} = i_1 + i_2
\end{equation}
Since $L(Z_{00} ) = p (|u|^{p-1} - U^{p-1} ) \, Z_{00} = a_0 (y) Z_{00}$, a direct application of Holder inequality gives
$$
|i_1| \leq C \left(  \int_{ \R^n \setminus \bigcup_{j=1}^k  B(\xi_j , {\eta \over k}) } |a_0 (y) |^{n\over 2} \right)^{4\over n+2}
 \left(  \int_{ \R^n \setminus \bigcup_{j=1}^k  B(\xi_j , {\eta \over k} )} |Z_{00}  (y) |^{2n\over n-2} \right)^{n-2\over n+2}
$$
Taking into account that $  \left(  \int_{ \R^n \setminus \bigcup_{j=1}^k  B(\xi_j , {\eta \over k} )} |Z_{00}  (y) |^{2n\over n-2} \right)
\leq  \left(  \int_{ \R^n } |Z_{00}  (y) |^{2n\over n-2} \right)$ and the validity of \equ{viernes2}, we get
\begin{equation}
\label{viernes5}
|i_1| \leq C \mu^{2 {n-1 \over n+2}}.
\end{equation}
Let us fix now $j \in \{ 1 , \ldots , k \}$.
Using now that
$$
\left| \int_{B(\xi_j , {\eta \over k} )} |L(Z_{00})  |^{2n \over n+2} \right| \leq C \left(  \int_{ B(\xi_j , {\eta \over k}) } |a_0 (y) |^{n\over 2} \right)^{4\over n+2}
 \left(  \int_{ B(\xi_j , {\eta \over k} )} |Z_{00}  (y) |^{2n\over n-2} \right)^{n-2\over n+2}
$$
together with the fact that
$$
 \int_{ B(\xi_j , {\eta \over k} )} |Z_{00}  (y) |^{2n\over n-2} \leq C k^{-n},
$$
we conclude that
\begin{equation}
\label{viernes6}
|i_2|\leq C \mu^{{n\over 2} {n-2 \over n+2} -{1\over 2} }
\end{equation}
>From \equ{viernes4}, \equ{viernes5} and \equ{viernes6} we conclude that
$$
\| L(Z_{00} ) \|_{L^{2n \over n+2} (\R^n )} \leq C \mu^{n-1 \over n}
$$
thus completing the proof of \equ{gino1}.

\medskip
\noindent

Let us now fix $l \in \{ 1, \ldots , k \}$. Say $l=1$. We write
$$
\varphi_1^\perp = \sum_{\alpha =0}^n c_{\alpha 1} \varphi_{\alpha 1}^\perp
$$
where
$$
L(\varphi_{\alpha1 }^\perp ) = - L(Z_{\alpha 1} ).
$$
After the change of variable $\tilde \varphi_{\alpha  1}^\perp (y) = \mu^{n-2 \over 2} \varphi_{\alpha 1}^\perp (\mu y + \xi_1 )$, the above equation gets rewritten as
$$
\Delta  (\tilde \varphi_{\alpha  1}^\perp )  + p U^{p-1}  (\tilde \varphi_{\alpha  1}^\perp ) + \underbrace{ p [ (\mu^{-{n-2 \over 2}} |u| (\mu y + \xi_1 ) )^{p-1} - U^{p-1} ] }_{:= a_1 (y) } \tilde \varphi_{\alpha  1}^\perp = h(y)
$$
where
$$
h(y) = - \mu^{n+2 \over 2} L(Z_{\alpha 1} ) (\mu y + \xi_1 ).
$$
We claim that $a_1 \in L^{n \over 2} (\R^n )$.
\begin{equation}
\label{gino2}
\| a_1 \|_{L^{n \over 2} (\R^n ) } \leq C \mu , \quad {\mbox {and}} \quad \|  h  \|_{L^{2n \over n+2} (\R^n )} \leq C \mu.
\end{equation}
We leave the details to the reader.
The proof of \equ{fattouno} follows by \equ{gino1}, \equ{gino2} and a direct application of Proposition \ref{gino}.

\section{Appendix} \label{appe1}
In this section we perform the computations of the entrances of the matrices $A$, $F$, $G$, $B$, $C$, $D$ and $H_\alpha$, $\alpha =3, \ldots , n$. We start with proving some usefull expansions and a formula.

\medskip
\noindent
{\bf Some usefull expansions.}\\
 Let $\eta >0$ and $\sigma >0$ be small and fixed numbers, independent of $k$.
Assume that  $y \in B(0,  {\eta \over \mu k^{1+\sigma}}  )$. We will provide usefull expansions of some functions in this region.

We start with the function, for  $y \in B(0,  {\eta \over \mu k^{1+\sigma}}  )$,
\begin{equation}
\label{Upsilon}
\Upsilon (y) :=  \mu^{n-2 \over 2} \,  U(\xi_1 + \mu y ) - \sum_{l>1} \, U(y + \mu^{-1} (\xi_1 - \xi_l ) ) .
\end{equation}
We have the validity of the following expansion
\begin{eqnarray}\label{rivoli1}
 \Upsilon (y)
&=&
-{n-2 \over 2} \, \mu^{n\over 2}  \left[ y_1 - \mu^{n-2 \over 2}  \sum_{l >1}^k {1  \over (1- \cos \theta_l )^{n-2 \over 2}}  (y_1 - {\sin \theta_l \over 1-\cos \theta_l } y_2 )  \right] \, \times \nonumber \\
& &  (1+ \mu^2 O(|y|) ) \nonumber \\
& & \nonumber \\
&+& {n-2 \over 4} \, \mu^{n+2 \over 2} \Biggl[    {n y_1^2 \over 2} - |y|^2  -  \mu^{n-2 \over 2} \sum_{l >1}
 {1 \over (1-\cos \theta_l )^{n \over 2}  }  \times  \nonumber \\
& &  \left(-1-|y|^2 + { n \over 2} (1-\cos \theta_l ) y_1^2 +  { n \over 2} (1+\cos \theta_l ) y_2^2 + n \sin \theta_l  y_1 y_2
 \right) \Biggl] \times \nonumber\\
& &  (1+ \mu^2 O(|y|^2 )) \nonumber\\
& & \nonumber \\
&+& \mu^{n+4 \over 2} O(1+ |y|^3 )   + O( \mu^{n+2 \over 2} )
\end{eqnarray}
for a fixed constant $A$.
Formula \equ{rivoli1} is a direct application of the fact that
$$
\mu^{n-2 \over 2} \left( 2 \over 1+ |\xi_1 |^2  \right)^{n-2 \over 2} - \mu^{n-2} \sum_{l >1}^k {1  \over (1- \cos \theta_l )^{n-2 \over 2}} = O( \mu^{n+2 \over 2})
$$
and
of  Taylor expansion applied separatedly to $ \mu^{n-2 \over 2} \,  U(\xi_1 + \mu y )$ and $ \sum_{l>1}^k \, U(y + \mu^{-1} (\xi_1 - \xi_l ) )$ in the considered region   $y \in B(0,  {\eta \over \mu k^{1+\sigma}}  )$. Indeed, we have
\begin{eqnarray} \label{man1}
U(\xi_1 + \mu y ) &=& \mu^{n-2 \over 2} \left( 2 \over (1+ |\xi_1 |^2 ) \right)^{n-2 \over 2}  \Biggl[   1 -  { (n-2) \over 2} \,  y_1 \,  \mu + {n-2 \over 4} \left( {n (y\cdot \xi_1 )^2 \over 2} - |y|^2 \right) \mu^2  \nonumber \\
& & \nonumber \\
 &+&   \mu^3 O(|y|^3 ) \Biggl] \, (1+ O(\mu^2 ) )
\end{eqnarray}
and
\begin{eqnarray} \label{man2}
U(y + \mu^{-1} (\xi_1 - \xi_l ) ) &= & {\mu^{n-2} \over (1- \cos \theta_l )^{n-2 \over 2}} \left[ 1- {(n-2) \over 2} \, {(\xi_1 - \xi_l ) \cdot y  \over (1-\cos \theta_l )} \,  \mu \right. \nonumber \\
&+& {n-2 \over 4} {\mu^2 \over (1-\cos \theta_l ) } \left(-1-|y|^2 + n\,  \left( {(\xi_1 - \xi_l ) \cdot y \over |\xi_1 -\xi_l | } \right)^2 \, \right)
\\
&+&  \left. \mu^3 {O(1+ |y|^3 ) \over |\xi_1 - \xi_l |^3 }    \right]. \nonumber
\end{eqnarray}
Recall now the definition of the functions $Z_\alpha$, $\alpha =0 , \ldots , n$  in \equ{defzetapiccole}.
In the region $y \in B(0,  {\eta \over \mu k^{1+\sigma}}  )$,
we need to describe the functions
$$
Z_\alpha (y + \mu^{-1} (\xi_l - \xi_1 ) ), \quad \alpha = 0 , 1, \ldots , n.
$$
A direct application of Taylor expansion
 gives
\begin{eqnarray} \label{man3}
Z_0 (y + \mu^{-1} (\xi_l - \xi_1 ) ) &=& - {n-2 \over 2} {\mu^{n-2} \over (1-\cos \theta_l )^{n-2 \over 2} }
\Biggl[ 1- (n-2) {(\xi_l - \xi_1 ) \cdot y \over |\xi_l - \xi_1 |^2 } \mu  \nonumber \\
&+&  {\mu^2 \over |\xi_l - \xi_1|^2} O(1+ |y|^2 ) \Biggl],
\end{eqnarray}
\begin{eqnarray} \label{man4}
Z_1 (y + \mu^{-1} (\xi_l - \xi_1 ) ) &=& - {n-2 \over 2} {\mu^n \over (1-\cos \theta_l )^{n \over 2} }
\Biggl[ \mu^{-1} (\cos \theta_l - 1)  + [ 1- {n \over 2} (1- \cos \theta_l ) ]  \, y_1  \nonumber \\
 &-&   {n \over 2} \sin \theta_l \, y_2  + \mu O(1+ |y| )  \Biggl]
\end{eqnarray}
\begin{eqnarray} \label{man4}
Z_2 (y + \mu^{-1} (\xi_l - \xi_1 ) ) &=& - {n-2 \over 2} {\mu^n \over (1-\cos \theta_l )^{n \over 2} }
\Biggl[ \mu^{-1} \sin \theta_l  + [ 1- {n \over 2} (1+ \cos \theta_l ) ]\, y_2  \nonumber \\
 &+&   {n \over 2} \sin \theta_l \, y_1  + \mu O(1+ |y| )  \Biggl]
\end{eqnarray}
and for $\alpha = 3, \ldots , n$
\begin{equation} \label{man6}
Z_\alpha (y + \mu^{-1} (\xi_l - \xi_1 ) ) =- {n-2 \over 2} \, {\mu^n \over (1-\cos \theta_l )^{n \over 2} }\,  y_\alpha \,  (1+ \mu^2 O(1+|y| )).
\end{equation}

\medskip
\medskip

\medskip
\medskip
\medskip
We have now the tools to give the proofs of \equ{A11}, \equ{A1l}, \equ{F11}, \equ{F1l}, \equ{G11} , \equ{G1l}, \equ{B11}, \equ{B1l}, \equ{C11}, \equ{C1l}, \equ{D11} and \equ{D1l}.

\medskip
\noindent
{\bf Computation of $A_{11}$.}
Let $\eta >0$ and $\sigma >0$ be small and fixed numbers. We write
\begin{eqnarray*}
A_{11} &=& \int_{\R^n }  (f' (u) - f' (U_1 ) ) Z_{01}^2 \\
&=&[ \int_{B(\xi_1 , {\eta \over k^{1+ \sigma}}) } + \int_{\R^n \setminus  B(\xi_1 , {\eta \over k^{1+ \sigma}}) }]  (f' (u) - f' (U_1) ) Z_{01}^2
\\
&=& I_1+ I_2
\end{eqnarray*}
We claim that the main part of the above expansion is $I_1$. Note that very close to $\xi_1$, $U_1 (x ) = O(\mu^{-{n-2 \over 2} })$. More in general, taking $\eta$ small if necessary, we have that $U_1$ dominates globally the other terms. We thus have

\begin{eqnarray*}
I_1 &=&   \int_{B(\xi_1 , {\eta \over k^{1+ \sigma}}) } f^{''} (U_1) [ U(x) - \sum_{l>1} U_l (x) + \tilde \phi (x) ] Z_{01}^2 (x) \, dx +  O (  k^{n-2} \mu^{n-1} )
\\
&(&x= \xi_1 + \mu y )
\\
&= &   \int_{B(0, {\eta \over \mu k^{1+ \sigma}}) } f^{''} (U)  \left[ \Upsilon(y)  \right]  Z_0^2  \, dx  \\
&+&    \int_{B(0, {\eta \over \mu k^{1+ \sigma}}) } f^{''} (U) \left[  \mu^{n-2 \over 2} \tilde \phi (\mu y + \xi_1 ) ]  \right]  Z_0^2  \, dy  +  O (  k^{n-2} \mu^{n-1} )   \\
&= &    \int_{B(0, {\eta \over \mu k^{1+ \sigma}}) } f^{''} (U)  \left[ \Upsilon (y)  \right]  Z_0^2    \\
&+&    p (p-1) \gamma  \int_{B(0, {\eta \over \mu k^{1+ \sigma}}) }  U^{p-2}   \phi_1  ( y )  Z_0^2 \, dx    +  O (  k^{n-2} \mu^{n-1} )
\end{eqnarray*}
where $ \Upsilon (y) $ is defined in \equ{Upsilon} and $\phi_1 (y) = \mu^{n-2 \over 2} \tilde \phi (\mu y + \xi_1 )$.
Using \equ{parameters}, expansion \equ{man1} and \equ{man2}, we get
$$
I_1 = O( k^{n-2} \mu^{n-1} ).
$$
On the other hand, we have that
\begin{equation}
\label{rivoli4}
I_2= O (  k^{n-2} \mu^{n-1} )
\end{equation}
Indeed, we first write
$$
I_2 = [ \sum_{j>1} \int_{B(\xi_j , {\eta \over k^{1+ \sigma}}) } + \int_{\R^n \setminus \bigcup_{j\geq 1}  B(\xi_j , {\eta \over k^{1+ \sigma}}) }]  (f' (u) - f' (U_1) ) Z_{01}^2
$$
Fix now $j >1$. In the ball $B(\xi_j  , {\eta \over k^{1+\sigma} })$, $u \sim U_j = O(\mu^{-{n-2 \over 2} })$ and $U_j$ dominates all the other terms. Taking this into consideration, we have that
\begin{eqnarray*}
\left| \int_{B(\xi_j , {\eta \over k^{1+\sigma} } )} [ f' (u) - f'(U_1 ) ] Z_{01}^2 \right| &\leq & \int_{B(\xi_j , {\eta \over k^{1+\sigma}} )} f'(U_j ) Z_{01}^2 \\
&\leq & C   \int_{B(0, {\eta \over \mu k^{1+\sigma}} )} {1\over (1+|y|^2 )^2 }  Z_{0}^2 (y + \mu^{-1} (\xi_j - \xi_1 ) ) \, dy \\
& & ({\mbox {using}} \equ{man3}) \\
&\leq & C { \mu^{2 (n-2) } \over (1- \cos \theta_j )^{n-2}}   \int_{B(0, {\eta \over \mu k^{1+\sigma}} )} {1\over (1+|y|^2 )^2 } \, dy \\
&\leq & C  { \mu^{2 (n-2) } \over (1- \cos \theta_j )^{n-2}} {1\over (\mu k^{1+\sigma} )^{n-4} }
\end{eqnarray*}
where $C$ is an appropiate positive constant independent of $k$.
Thus we conclude that
\begin{equation}
\label{rivoli2}
\left| \sum_{j>1}  \int_{B(\xi_j , {\eta \over k^{1+\sigma} } )} [ f' (u) - f'(U_1 ) ] Z_{01}^2 \right| \leq C \mu^{n-1} k^{n-2},
\end{equation}
where again  $C$ is an appropiate positive constant independent of $k$.

On the other hand
\begin{eqnarray*}
\left|  \int_{\R^n \setminus \bigcup_{j\geq 1}  B(\xi_j , {\eta \over k^{1+ \sigma}}) } (f' (u) - f' (U_1) ) Z_{01}^2 \right| &\leq & C \mu^{-n+2}
 \int_{\R^n \setminus \bigcup_{j\geq 1}  B(\xi_j , {\eta \over k^{1+ \sigma}}) }  {1\over (1+ |x|^2 )^2}  Z_{0}^2 ({x-\xi_1 \over \mu} ) \\
&\leq & C  \mu^{n-2}
 \int_{\R^n \setminus \bigcup_{j\geq 1}  B(\xi_j , {\eta \over k^{1+ \sigma}}) }  {1\over (1+ |x|^2 )^2} {1\over |x-\xi_1|^{2(n-2)}} \, dx\\
&\leq & C \mu^{n-2} k^{(n-4) (1+\sigma )}
\end{eqnarray*}
Thus we conclude that
\begin{equation}
\label{rivoli3}
\left|  \int_{\R^n \setminus \bigcup_{j\geq 1}  B(\xi_j , {\eta \over k^{1+ \sigma}}) } (f' (u) - f' (U_1) ) Z_{01}^2 \right|  \leq C \mu^{n-1} k^{n-2}
\end{equation}

Formulas \equ{rivoli2} and \equ{rivoli3} imply \equ{rivoli4}.
Thus  we get \equ{A11}.

\medskip
\noindent
{\bf Computation of $A_{1l}$.} Let $l >1$ be fixed.
Let again $\eta >0$ and $\sigma >0$ be small and fixed numbers. In this case we write
\begin{eqnarray*}
A_{1l} &=& \int_{\R^n }  (f' (u) - f' (U_1 ) ) Z_{01} Z_{0l}  \\
&=&[ \int_{B(\xi_l , {\eta \over k^{1+ \sigma}}) } + \int_{\R^n \setminus  B(\xi_l , {\eta \over k^{1+ \sigma}}) }]  (f' (u) - f' (U_1) ) Z_{01} Z_{0l}
\\
&=& I_1+ I_2
\end{eqnarray*}
 We start with the expansion of $I_1$. Using again the fact that in $B(\xi_l , {\eta \over k^{1+\sigma} })$ the leading term in $u$ is $U_l$, which is of order $\mu^{-{n-2 \over 2}}$, and dominates all the other terms in the definition of $u$, we get that
\begin{eqnarray*}
I_1 &=&   \int_{B(\xi_l , {\eta \over k})}        [f'(u) - f'(U_1)] Z_{01} Z_{0l}  \, dx  \\
&=&- p \gamma \int_{B(\xi_l, {\eta \over k})} [\mu^{-{n-2 \over 2}} U({x-\xi_l \over \mu }) ]^{p-1} \, \mu^{-n+2 } Z_0 ({x-\xi_1 \over \mu }) Z_0 ({x-\xi_l \over \mu })  +R_1   \\
& & (x=\mu y + \xi_l ) \\
&=&- p \gamma  \int_{B(0,{\eta \over \mu k})} U^{p-1}  (y) \, Z_0 (y) \, Z_0 (y+ \mu^{-1} (\xi_l - \xi_1 )) \, dy  + R_1
\end{eqnarray*}
where
$R_1 = I_1 -  p \gamma \int_{B(\xi_l, {\eta \over k})} [\mu^{-{n-2 \over 2}} U({x-\xi_l \over \mu }) ]^{p-1} \, \mu^{-n+2 } Z_0 ({x-\xi_1 \over \mu }) Z_0 ({x-\xi_l \over \mu })  $.
Now using the expansion \equ{man3}, together with formula \equ{nonso},
we get, for any integer $l>1$
\begin{equation}
\label{rivoli77}
I_1 = - p \gamma {n-2 \over 2} (-\int_{\R^n}  U^{p-1} Z_0 \, dy ) \left[ {1 \over (1-\cos \theta_l )^{n -2\over 2}} \right] \mu^{n-2} + O(\mu^{n-1 } k^{n-2}) .
\end{equation}
Observe that
\begin{equation}
\label{nonso}
\int_{\R^n} U^{p-1} Z_0 \, dy = -{n-2 \over 2} (-\int_{\R^n } y_1 U^{p-1} Z_1 (y) \, dy )
\end{equation}
 Indeed,
\begin{eqnarray}\label{sat1}
\int_{\R^n} U^{p-1} Z_0 \, dy &=& {n-2 \over 2} \int U^p + U^{p-1} \nabla U \cdot y  \nonumber \\
&=& {n-2 \over 2} \int U^p + n \int U^{p-1}  y_1  Z_1 (y) \, dy
\end{eqnarray}
On the other hand, we have
$$
p \int U^{p-1} y_1 Z_1 (y) = - \int U^p
$$
We thus conclude \equ{nonso} from \equ{sat1}. Replacing \equ{nonso}  in \equ{rivoli77} we get
\begin{equation}
\label{rivoli7}
I_1 =  p \gamma ({n-2 \over 2})^2 (-\int_{\R^n}  U^{p-1} y_1 Z_1 \, dy ) \left[ {1 \over (1-\cos \theta_l )^{n -2\over 2}} \right] \mu^{n-2} + O(\mu^{n-1 } k^{n-2}) .
\end{equation}
On the other hand, a direct computation gives that
\begin{equation}
\label{rivoli8}
R_1 =  O(\mu^{n-1 } k^{n-2}) .
\end{equation}
We now estimate the term $I_2$.  We write
$$
I_2 =[\sum_{j\not= l}   \int_{B(\xi_j , {\eta \over k^{1+ \sigma}}) } +   \int_{\R^n \setminus \bigcup_{j} B(\xi_j , {\eta \over k^{1+ \sigma}}) }]  (f' (u) - f' (U_1) ) Z_{01} Z_{0l}
$$
Fix now $j \not= l$. In the ball $B(\xi_j  , {\eta \over k^{1+\sigma} })$, $u \sim U_j = O(\mu^{-{n-2 \over 2} })$ and $U_j$ dominates all the other terms. Taking this into consideration, we have that
\begin{eqnarray*}
&  & \left|    \int_{B(\xi_j , {\eta \over k^{1+\sigma} } )} [ f' (u) - f'(U_1 ) ] Z_{01} Z_{0l}  \right| \leq  \int_{B(\xi_j , {\eta \over k^{1+\sigma}} )} f'(U_j ) Z_{01} Z_{0l} \\
&\leq & C  \int_{B(0, {\eta \over \mu k^{1+\sigma}} )} {1\over (1+|y|^2 )^2 }  Z_{0} (y + \mu^{-1} (\xi_j - \xi_1 ) ) \,  Z_{0} (y + \mu^{-1} (\xi_j - \xi_l ) ) \, dy \\
& & ({\mbox {using}} \equ{man3}) \\
&\leq & C { \mu^{2 (n-2) } \over (1- \cos \theta_j )^{n-2}}   \int_{B(0, {\eta \over \mu k^{1+\sigma}} )} {1\over (1+|y|^2 )^2 } \, dy  \quad {\mbox {if}} \quad j\not= 1\\
& & {\mbox {while}} \\
&\leq & C { \mu^{2 (n-2) } \over (1- \cos \theta_j )^{n-2 \over 2}}    \quad {\mbox {if}} \quad j= 1\\
\end{eqnarray*}
Thus we conclude that
\begin{equation}
\label{rivoli5}
\left| \sum_{j\not= l}  \int_{B(\xi_j , {\eta \over k^{1+\sigma} } )} [ f' (u) - f'(U_1 ) ] Z_{01} Z_{0l} \right|  \leq C \mu^{n-1} k^{n-2},
\end{equation}
where again  $C$ is an appropiate positive constant independent of $k$.

On the other hand
\begin{eqnarray*}
& & \left|  \int_{\R^n \setminus \bigcup_{j\geq 1}  B(\xi_j , {\eta \over k^{1+ \sigma}}) } (f' (u) - f' (U_1) ) Z_{01} Z_{0l} \right|\\
& \leq & C \mu^{-n+2}
 \int_{\R^n \setminus \bigcup_{j\geq 1}  B(\xi_j , {\eta \over k^{1+ \sigma}}) }  {1\over (1+ |x|^2 )^2}  Z_{0} ({x-\xi_1 \over \mu} )  Z_{0} ({x-\xi_l \over \mu} ) \\
&\leq & C  \mu^{n-2}
 \int_{\R^n \setminus \bigcup_{j\geq 1}  B(\xi_j , {\eta \over k^{1+ \sigma}}) }  {1\over (1+ |x|^2 )^2} {1\over |x-\xi_1|^{(n-2)}}  {1\over |x-\xi_l|^{(n-2)}} \, dx
\end{eqnarray*}
Thus we conclude that
\begin{equation}
\label{rivoli6}
\left|  \int_{\R^n \setminus \bigcup_{j\geq 1}  B(\xi_j , {\eta \over k^{1+ \sigma}}) } (f' (u) - f' (U_1) ) Z_{01}^2 \right|  \leq C \mu^{n-1} k^{n-2}
\end{equation}

\medskip
Summing up the information in \equ{rivoli7}, \equ{rivoli8}, \equ{rivoli5} and \equ{rivoli6}, we conclude that
the validity of \equ{A1l}.

\medskip
\noindent
{\bf Computation of $F_{11}$.} Let $\eta >0$ and $\sigma >0$ be small and fixed numbers. We write
$$
F_{1 1} =       \int_{\R^n } [f' ( u ) - f' ( U_1 ) ] Z_{11}^2 \, dx
$$
$$
=\left[  \int_{B(\xi_1 , {\eta \over k^{1+\sigma}})  }+  \int_{\R^n \setminus B(\xi_1 , {\eta \over k^{1+\sigma}}) }\right]
  [f' ( u ) - f' ( U_1 ) ]   Z_{11}^2 \, dx = I_1 + I_2
$$
We claim that the main part of the above expansion is $I_1$. In $B(\xi_1 , {\eta \over k^{1+ \sigma}})$, the main part in $u$ is given by $U_1$, which is of size $\mu^{-{n-2 \over 2}}$ in this region, and which dominates all the other terms of $u$. Thus  we can perform a Taylor expansion of the function
$$ f' (u) - f' (U_1) = f^{''} (U_1 + s (u-U_1) ) [u-U_1] \quad {\mbox {for some}} \quad 0<s<1,
$$ so we write
$$
I_1 = \int_{B(\xi_1 , {\eta \over k^{1+\sigma}})  } f^{''}(  U_1 )  \left[ U(x) - \sum_{l> 1}^k U_l (x) + \tilde \phi (x) \right] Z_{11}^2 \, dx  +R_1 ,
$$
Performing the change of variables
$x=\xi_1 + \mu y$, and recalling that $Z_{11} (x) = \mu^{-{n\over 2}} Z_1 ({x-\xi_1 \over \mu }) (1+ O(\mu^2))$, we get
$$
I_1 - R_1 =\mu^{-2}
  \int_{B(0 , {\eta \over \mu k^{1+\sigma}})  } f^{''}(  U_1 )  \Upsilon (y)\,  Z_1^2 (y) \,  dy
$$
$$
+ \mu^{-2} \,  \int_{B(0 , {\eta \over \mu k^{1+\sigma}})  } f^{''}(  U_1 ) \mu^{n-2 \over 2} \tilde \phi (\xi_1 + \mu y ) \, Z_1^2 (y) \,  dy +  O(\mu^{n \over 2})
$$
where we recall that
$$
\Upsilon (y) =  \left[ \mu^{n-2 \over 2} U(\xi_1 + \mu y) - \sum_{l>1}^k U(y+ \mu^{-1} (\xi_1 - \xi_l ) )   \right] .
$$
Recall now that $\tilde \phi_1 (y) = \mu^{n-2 \over 2} \tilde \phi_1 (\mu y + \xi_1 ) $ solves the equation
$$
\Delta \phi_1 + f' ( U ) \phi_1  + \chi_1 (\xi_1 + \mu y ) \mu^{n+2 \over 2} E(\xi_1 + \mu y ) + \gamma \mu^{n+2 \over 2} {\mathcal N} (\phi_1 ) (\xi_1 + \mu y ) = 0 \quad {\mbox {in}} \quad \R^n
$$
Hence we observe that
$$
p (p-1) \gamma \int_{\R^n}U^{p-2}  \phi_1 Z_1^2 = p \gamma \int_{\R^n} {\partial \over \partial y_1} (U^{p-1} ) \phi_1 Z_1
$$
$$
= -p\gamma \int_{\R^n} U^{p-1} \phi_1 ( \partial_1 Z_1 ) - p \gamma \int_{\R^n} U^{p-1} (\partial_1 \phi_1 ) Z_1
$$
$$
=\int_{\R^n} \chi_1 (\xi_1 + \mu y ) \mu^{n+2 \over 2} E (\xi_1 + \mu y ) \partial_1 Z_1 \, dy + \gamma \mu^{n+2 \over 2} \int_{\R^n} {\mathcal N} (\phi_1 ) (\xi_1 + \mu y ) \partial_1 Z_1
$$
$$ + \underbrace{
\int_{\R^n} \left[ \Delta \phi_1 ( \partial_1 Z_1 ) + \Delta Z_1 (\partial_1 \phi_1 )\right]}_{=0}
$$
$$
=  \mu^{n+2 \over 2}  \int_{ B(0, {\eta \over \mu k^{1+ \sigma}})} E (\xi_1 + \mu y ) (\partial_1 Z_1 ) \, dy + \gamma \mu^{n+2 \over 2} \int_{\R^n} {\mathcal N} (\phi_1 ) (\xi_1 + \mu y )( \partial_1 Z_1 ) + O( \mu^{n\over 2} )
$$
Taking this into account, we first observe that
$$
I_1 - R_1= p \gamma \mu^{-2} \int_{B(0, {\eta \over \mu k^{1+ \sigma}} ) }
\,  \Upsilon (y) \,  \partial_1 (U^{p-1} Z_1 )  \, dy
 + O(\mu^{n\over 2})
$$
On the other hand
recall that
$$
R_1 = \int_{B(\xi_1 , {\eta \over k^{1+\sigma}})  } [ f^{''} (U_1 + s (u-U_1 ) ) - f^{''}(  U_1 )]  \left[ U(x) - \sum_{l> 1}^k U_l (x) + \tilde \phi (x) \right] Z_{11}^2 \, dx
$$
Thus we have
$$
| R_1 | \leq C  \int_{B(\xi_1 , {\eta \over k^{1+\sigma}})  } U_1^{p-2}  | (1 + s U_1^{-1}  (u-U_1 ) )^{p-2} - 1 | \left| U(x) - \sum_{l> 1}^k U_l (x) + \tilde \phi (x) \right| Z_{11}^2 \, dx
$$
$$
\leq C \mu^{n-2 \over 2}  \int_{B(\xi_1 , {\eta \over k^{1+\sigma}})  } U_1^{p-2}   \left| U(x) - \sum_{l> 1}^k U_l (x) + \tilde \phi (x) \right|^2 Z_{11}^2 \, dx
$$
Arguing as before, we get that
$$
R_1 = \mu^{n\over 2} O(1)
$$
where $O(1)$ is bounded as $k \to 0$.
Using the definition of $\mu$
and the expansions \equ{man1}, \equ{man2}  we conclude that
\begin{eqnarray}\label{rivoli10}
I_{1} &=& p\gamma \mu^{n-2 \over 2} {n-2 \over 4} \int_{\R^n} ( {n\over 2} y_1^2 - |y|^2 ) \partial_1 (U^{p-1} Z_1 )  \nonumber \\
&-& \mu^{n - 2} {n-2 \over 4} \sum_{l>1} {1\over (1-\cos \theta_l )^{n \over 2}} \int_{\R^n} [ -1 - |y|^2 +{n \over 2} (1-\cos \theta_l ) y_1^2 + {n \over 2} (1+ \cos \theta_l ) y_2^2 ]
\partial_1 (U^{p-1} Z_1 )  \nonumber  \\
&+& O(\mu^{n\over 2})  \nonumber  \\
&=& p \, \gamma \, {n-2 \over 4} \, \mu^{n-2 \over 2} \left[ (n-2) + \mu^{n-2 \over 2} \sum_{l>1}^k { n \cos \theta_l - (n-2) \over (1-\cos \theta_l )^{n \over 2}} \right] \, (-\int_{\R^n } y_1 U^{p-1} Z_1 )  \nonumber \\
&+& O ( \mu^{n\over 2})
\end{eqnarray}
On the other hand, we have that
\begin{equation}
\label{rivoli11}
I_2= \mu^{n\over 2} O(1)
\end{equation}
where $O(1)$ is bounded as $k \to 0$.
Indeed, we first write
$$
I_2 = [ \sum_{j>1} \int_{B(\xi_j , {\eta \over k^{1+ \sigma}}) } + \int_{\R^n \setminus \bigcup_{j\geq 1}  B(\xi_j , {\eta \over k^{1+ \sigma}}) }]  (f' (u) - f' (U_1) ) Z_{11}^2
$$
Fix now $j >1$. In the ball $B(\xi_j  , {\eta \over k^{1+\sigma} })$, $u \sim U_j = O(\mu^{-{n-2 \over 2} })$ and $U_j$ dominates all the other terms. Taking this into consideration, we have that
\begin{eqnarray*}
\left| \int_{B(\xi_j , {\eta \over k^{1+\sigma} } )} [ f' (u) - f'(U_1 ) ] Z_{11}^2 \right| &\leq & \int_{B(\xi_j , {\eta \over k^{1+\sigma}} )} f'(U_j ) Z_{11}^2 \\
&\leq & C  \mu^{-2} \int_{B(0, {\eta \over \mu k^{1+\sigma}} )} {1\over (1+|y|^2 )^2 }  Z_{1}^2 (y + \mu^{-1} (\xi_j - \xi_1 ) ) \, dy \\
& & ({\mbox {using}} \equ{man3}) \\
&\leq & C { \mu^{2 n - 2} \over (1- \cos \theta_j )^{n}}   \int_{B(0, {\eta \over \mu k^{1+\sigma}} )} {1\over (1+|y|^2 )^2 } \, dy \\
&\leq & C  { \mu^{2 n - 2} \over (1- \cos \theta_j )^{n}} {1\over (\mu k^{1+\sigma} )^{n-4} }
\end{eqnarray*}
where $C$ is an appropiate positive constant independent of $k$.
Thus we conclude that
\begin{equation}
\label{rivoli12}
\left| \sum_{j>1}  \int_{B(\xi_j , {\eta \over k^{1+\sigma} } )} [ f' (u) - f'(U_1 ) ] Z_{11}^2 \right|  \leq C \mu^{n \over 2},
\end{equation}
where again  $C$ is an appropiate positive constant independent of $k$.

On the other hand
\begin{eqnarray*}
& & \left|  \int_{\R^n \setminus \bigcup_{j\geq 1}  B(\xi_j , {\eta \over k^{1+ \sigma}}) } (f' (u) - f' (U_1) ) Z_{11}^2 \right| \leq  C \mu^{-n}
 \int_{\R^n \setminus \bigcup_{j\geq 1}  B(\xi_j , {\eta \over k^{1+ \sigma}}) }  {1\over (1+ |x|^2 )^2}  Z_{1}^2 ({x-\xi_1 \over \mu} ) \\
&\leq & C  \mu^{n-2}
 \int_{\R^n \setminus \bigcup_{j\geq 1}  B(\xi_j , {\eta \over k^{1+ \sigma}}) }  {1\over (1+ |x|^2 )^2} {1\over |x-\xi_1|^{2(n-1)}} \, dx\\
&\leq & C \mu^{n-2} k^{(n-2) (1+\sigma )}
\end{eqnarray*}
Thus we conclude that
\begin{equation}
\label{rivoli13}
\left|  \int_{\R^n \setminus \bigcup_{j\geq 1}  B(\xi_j , {\eta \over k^{1+ \sigma}}) } (f' (u) - f' (U_1) ) Z_{11}^2 \right|  \leq C \mu^{n \over 2}
\end{equation}

\medskip
>From  \equ{rivoli12} and \equ{rivoli13} we get \equ{rivoli11}. From \equ{rivoli10} and \equ{rivoli11} we conclude \equ{F11}.

\noindent
{\bf Computation of $F_{1l}$.}  Let $l >1$ be fixed.
Let again $\eta >0$ and $\sigma >0$ be small and fixed numbers. In this case we write
\begin{eqnarray*}
F_{1l} &=& \int_{\R^n }  (f' (u) - f' (U_1 ) ) Z_{11} Z_{1l}  \\
&=&[ \int_{B(\xi_l , {\eta \over k^{1+ \sigma}}) } + \int_{\R^n \setminus  B(\xi_l , {\eta \over k^{1+ \sigma}}) }]  (f' (u) - f' (U_1) ) Z_{11} Z_{1l}
\\
&=& I_1+ I_2
\end{eqnarray*}
 We start with the expansion of $I_1$. Recall that
$$
Z_{1l} (x) = \left[ \cos \theta_l \mu^{-{n\over 2}} Z_1 ({x-\xi_l \over \mu}) + \sin \theta_l \mu^{-{n\over 2}} Z_2 ({x-\xi_l \over \mu}) \right] \, \left( 1+ O(\mu^2) \right).
$$
Using again the fact that in $B(\xi_l , {\eta \over k^{1+\sigma} })$ the leading term in $u$ is $U_l$, which is of order $\mu^{-{n-2 \over 2}}$, and dominates all the other terms in the definition of $u$, we get that
\begin{eqnarray*}
I_1 &=&-
p \cos \theta_l  \int_{B(\xi_l, {\eta \over k})} [\mu^{-{n-2 \over 2}} U({x-\xi_l \over \mu }) ]^{p-1} \, \mu^{-n } Z_1 ({x-\xi_1 \over \mu }) Z_1 ({x-\xi_l \over \mu })    \\
 &-&
p \sin \theta_l  \int_{B(\xi_l, {\eta \over k})} [\mu^{-{n-2 \over 2}} U({x-\xi_l \over \mu }) ]^{p-1} \, \mu^{-n } Z_1 ({x-\xi_1 \over \mu }) Z_2 ({x-\xi_l \over \mu })  +R_1   \\
& & (x=\mu y + \xi_l ) \\
&=& - p \gamma \mu^{-2}   \cos \theta_l \int_{B(0,{\eta \over \mu k})} U^{p-1} Z_1 Z_1 (y+ \mu^{-1} (\xi_l - \xi_1 )) \, dy  \\
&-&  p \gamma \mu^{-2} \sin \theta_l  \int_{B(0,{\eta \over \mu k})} U^{p-1} Z_1 Z_2 (y+ \mu^{-1} (\xi_l - \xi_1 )) \, dy + R_1
\end{eqnarray*}
Now using the expansion \equ{man4}
we get, for any $l>1$
\begin{eqnarray}
\label{F1l}
I_1 -R_1& = & p \,  \gamma \, {n-2 \over 4} \, \Xi  \,  \cos \theta_l \, \left[ {n-2 - n \cos \theta_l \over (1-\cos \theta_l )^{n\over 2}} \right] \mu^{n-2} \nonumber \\
&-& p \,  \gamma \, {n-2 \over 4} \, \Xi  \,  \sin \theta_l \, \left[ {n \sin \theta_l \over (1-\cos \theta_l )^{n\over 2}} \right] \mu^{n-2}
 + O(\mu^{n\over 2} ) \nonumber \\
&=& p \,  \gamma \, {n-2 \over 2} \, \Xi  \, \left[  {{n-2 \over 2} \cos \theta_l  - {n \over 2}  \over (1-\cos \theta_l )^{n\over 2}} \right] \mu^{n-2} + O(\mu^{n \over 2} )
\end{eqnarray}
On the other hand we directly compute
$$
R_1 =  \mu^{n \over 2}O(1)
$$
where $O(1)$ is bounded as $k \to 0$.
We now estimate the term $I_2$.  We write
$$
I_2 =[\sum_{j\not= l}   \int_{B(\xi_j , {\eta \over k^{1+ \sigma}}) } +   \int_{\R^n \setminus \bigcup_{j} B(\xi_j , {\eta \over k^{1+ \sigma}}) }]  (f' (u) - f' (U_1) ) Z_{11} Z_{1l}
$$
Fix now $j \not= l$. In the ball $B(\xi_j  , {\eta \over k^{1+\sigma} })$, $u \sim U_j = O(\mu^{-{n-2 \over 2} })$ and $U_j$ dominates all the other terms. Taking this into consideration, we have that
\begin{eqnarray*}
&  & \left|    \int_{B(\xi_j , {\eta \over k^{1+\sigma} } )} [ f' (u) - f'(U_1 ) ] Z_{01} Z_{0l}  \right| \leq  \int_{B(\xi_j , {\eta \over k^{1+\sigma}} )} f'(U_j ) Z_{11} Z_{1l} \\
&\leq & C  \mu^{-2} \int_{B(0, {\eta \over \mu k^{1+\sigma}} )} {1\over (1+|y|^2 )^2 }  Z_{1} (y + \mu^{-1} (\xi_j - \xi_1 ) ) \,  Z_{1} (y + \mu^{-1} (\xi_j - \xi_l ) ) \, dy \\
& & ({\mbox {using}} \equ{man4}) \\
&\leq & C { \mu^{2 n - 2} \over (1- \cos \theta_j )^{n}}   \int_{B(0, {\eta \over \mu k^{1+\sigma}} )} {1\over (1+|y|^2 )^2 } \, dy  \quad {\mbox {if}} \quad j\not= 1\\
& & {\mbox {while}} \\
&\leq & C { \mu^{2 n - 2} \over (1- \cos \theta_j )^{n \over 2}}    \quad {\mbox {if}} \quad j= 1\\
\end{eqnarray*}
Thus we conclude that
\begin{equation}
\label{rivoli5}
\left| \sum_{j\not= l}  \int_{B(\xi_j , {\eta \over k^{1+\sigma} } )} [ f' (u) - f'(U_1 ) ] Z_{11} Z_{1l} \right| \leq  C \mu^{n \over 2},
\end{equation}
where again  $C$ is an appropiate positive constant independent of $k$.

On the other hand
\begin{eqnarray*}
& & \left|  \int_{\R^n \setminus \bigcup_{j\geq 1}  B(\xi_j , {\eta \over k^{1+ \sigma}}) } (f' (u) - f' (U_1) ) Z_{11} Z_{1l} \right|\\
& \leq & C \mu^{-n}
 \int_{\R^n \setminus \bigcup_{j\geq 1}  B(\xi_j , {\eta \over k^{1+ \sigma}}) }  {1\over (1+ |x|^2 )^2}  Z_{1} ({x-\xi_1 \over \mu} )  Z_{1} ({x-\xi_l \over \mu} ) \\
&\leq & C  \mu^{n-4}
 \int_{\R^n \setminus \bigcup_{j\geq 1}  B(\xi_j , {\eta \over k^{1+ \sigma}}) }  {1\over (1+ |x|^2 )^2} {1\over |x-\xi_1|^{(n-1)}}  {1\over |x-\xi_l|^{(n-1)}} \, dx
\end{eqnarray*}
Thus we conclude that
\begin{equation}
\label{rivoli6}
\left|  \int_{\R^n \setminus \bigcup_{j\geq 1}  B(\xi_j , {\eta \over k^{1+ \sigma}}) } (f' (u) - f' (U_1) ) Z_{01}^2 \right|  \leq C \mu^{n \over 2}
\end{equation}

\noindent
{\bf Computation of $G_{11}$.}
Let $\eta >0$ and $\sigma >0$ be small and fixed numbers. We write
\begin{eqnarray*}
G_{11} &=& \int_{\R^n }  (f' (u) - f' (U_1) ) Z_{21}^2 \\
&=&[ \int_{B(\xi_1 , {\eta \over k^{1+ \sigma}}) } + \int_{\R^n \setminus  B(\xi_1 , {\eta \over k^{1+ \sigma}}) }]  (f' (u) - f' (U_1) ) Z_{21}^2
\\
&=& I_1+ I_2
\end{eqnarray*}
Recall that $Z_{21}  (x) = \mu^{-{n\over 2}} Z_2 ({x-\xi_1 \over \mu})$.
We claim that the main part of the above expansion is $I_1$. Arguing as in the expansion of $F_{11}$, in the set $B(\xi_1 , {\eta \over k^{1+ \sigma}})$ we  perform a Taylor expansion of the function $ (f' (u) - f' (U_1) )$ so that
\begin{eqnarray*}
I_1 &=&   \int_{B(\xi_1 , {\eta \over k^{1+ \sigma}}) } f^{''} (U_1) [ U(x) - \sum_{l>1}^k U_l (x) + \tilde \phi (x) ] Z_{21}^2 (x) \, dx +R_1
\\
&(& {\mbox {changing variables }} x= \xi_1 + \mu y )
\\
&= &  \mu^{-2} \int_{B(0, {\eta \over \mu k^{1+ \sigma}}) } f^{''} (U) \Upsilon (y) \, Z_2^2   \\
&+&  \mu^{-2}  \int_{B(0, {\eta \over \mu k^{1+ \sigma}}) } f^{''} (U)  \mu^{n-2 \over 2} \tilde \phi (\mu y + \xi_1 )  Z_{2}^2  \, dx   + R_1 \\
&= &  \mu^{-2} \int_{B(0, {\eta \over \mu k^{1+ \sigma}}) } f^{''} (U) \Upsilon (y) Z_2^2    \\
&+&    p (p-1) \gamma   \mu^{-2} \int_{B(0, {\eta \over \mu k^{1+ \sigma}}) }  U^{p-2}   \phi_1  ( y )  Z_2^2 \, dx    + R_1
\end{eqnarray*}
where $\phi_1 (y) = \mu^{n-2 \over 2} \tilde \phi (\mu y + \xi_1 )$ and $\Upsilon (y) =  \left[ \mu^{n-2 \over 2} U( \xi_1 + \mu y ) - \sum_{l>1}^k U (y + \mu^{-1} (\xi_1 - \xi_l ) )   \right] $.

Using the equation satisfied by $\phi_1$ and by $Z_2$ in $\R^n$, we get
\begin{eqnarray*}
 & & p(p-1) \gamma \int U^{p-2} \phi_1 Z_2^2 = p \gamma \int {\partial \over \partial y_2} U^{p-1} \, \phi_1 Z_2
\\
&=& - p \gamma \int U^{p-1} \partial_{y_2 } \phi_1 Z_2 - p \gamma \int U^{p-1} \phi_1 \partial_{y_2 } Z_2  \\
&=& \int \zeta_1 (\xi_1 + \mu y ) \mu^{n+2 \over 2} E(\xi_1 + \mu y )
\partial_{y_2} \, Z_2  + \gamma \mu^{n+2 \over 2} \int  {\mathcal N} (\phi_1 ) (\xi_1 + \mu y ) \partial_{y_2} Z_2 \\
&=& p\gamma \int_{B(0,{\eta \over \mu k^{1+ \sigma}})} U^{p-1} \left[ \mu^{n-2 \over 2} U(\xi_1 + \mu y ) - \sum_{l>1} U(y + \mu^{-1} (\xi_1 - \xi_l )) \right] \partial_{y_2} Z_2 \\
&+& O(\mu^{n\over 2} )
\end{eqnarray*}
Thus we conclude that
$$
I_1= p \gamma \mu^{-2}  \,  \int_{B(0, {\eta \over \mu k^{1+ \sigma}}) }  \Upsilon (y) \, \partial_{y_2} \left( U^{p-1} Z_2 \right) \, dy
+  O(\mu^{n\over 2} )
$$
Using the definition of $\mu$ in \equ{parameters},  we see that the first order term in expansions  \equ{man1} and \equ{man2} gives a lower order contribution to $I_1$. Furthermore, by symmetry, also the second order term in the expansions \equ{man1} and \equ{man2} gives a small contribution. Thus, the third order term in the above mentioned expansions is the one that counts. We get indeed
\begin{eqnarray*}
I_1 &=& p \, \gamma  \, {n-2 \over 4}\,  \mu^{n-2 \over 2} \, \int [{n\over 2} y_1^2 - |y|^2 ] \partial_{y_2 } (U^{p-1} Z_2 )  \\
&-&   p \, \gamma \, {n-2 \over 4} \,   \mu^{n-2} \, \sum_{l>1}^k  {1\over (1-\cos \theta_l )^{n \over 2} } \int \Biggl[ -1 -|y|^2 \\
& + &{n \over 2} (1-\cos \theta_l ) y_1^2 +{n \over 2} (1+ \cos \theta_l ) y_2^2 \Biggl] \partial_{y_2} (U^{p-1} Z_2 ) +  O(\mu^{n\over 2} )
\\
&=& -  p \gamma {n-2 \over 4}   \mu^{n-2 \over 2}  \left[ 2 + \mu^{n-2 \over 2} \sum_{l>1}^k {-2 + n (1+ \cos \theta_l ) \over (1-\cos \theta_l )^{n \over 2}} \right] (-\int y_2 U^{p-1} Z_2 ) +  O(\mu^{n\over 2} )
\end{eqnarray*}
On the other hand, arguing as in the proof of estimate \equ{rivoli11}, we have that
$$
I_2= \mu^{n\over 2} O(1)
$$
where $O(1)$ is bounded as $k \to \infty$.
Thus we conclude \equ{G11}.

\medskip
\noindent
{\bf Computation of $G_{1l}$.} Let $l >1$ be fixed. Arguing as in the computation of $F_{1l}$, we first observe that
$$
G_{1l}  =
 \int_{B(\xi_l , {\eta \over k})}        [f'(u) - f'(U_1)] Z_{21} Z_{2l}  \, dy +   \,  O(\mu^{n\over 2} )
$$
Recall that
$$
Z_{2l} (x) =\left[  -\sin \theta_l \, \mu^{-{n\over 2}} Z_1 ({x-\xi_l \over \mu}) + \cos \theta_l \, \mu^{-{n\over 2}} \, Z_2 ({x-\xi_l \over \mu }) \right] \left( 1+ O(\mu^2) \right).
$$
In the ball $B(\xi_l , {\eta \over k})$, we expand as before in Taylor, and we get
\begin{eqnarray*}
G_{1l} &=&-
p\, \gamma \cos \theta_l  \int_{B(\xi_l, {\eta \over k})} [\mu^{-{n-2 \over 2}} U({x-\xi_l \over \mu }) ]^{p-1} \, \mu^{-n } Z_2 ({x-\xi_1 \over \mu }) Z_2 ({x-\xi_l \over \mu })    \\
&+& p\, \gamma \sin \theta_l  \int_{B(\xi_l, {\eta \over k})} [\mu^{-{n-2 \over 2}} U({x-\xi_l \over \mu }) ]^{p-1} \, \mu^{-n } Z_2 ({x-\xi_1 \over \mu }) Z_1 ({x-\xi_l \over \mu })    \\
&+& O(\mu^{n\over 2} ) \\
& & (x=\mu y + \xi_l ) \\
&=& - p \, \gamma \, \mu^{-2} \,  \sin \theta_l \, \int_{B(0,{\eta \over \mu k})} U^{p-1} Z_2 Z_2 (y+ \mu^{-1} (\xi_l - \xi_1 )) \, dy \\
&+&  p \, \gamma \, \mu^{-2} \,  \cos \theta_l \, \int_{B(0,{\eta \over \mu k})} U^{p-1} Z_2 Z_1 (y+ \mu^{-1} (\xi_l - \xi_1 )) \, dy + O(\mu^{n \over 2}).
\end{eqnarray*}
Now using the expansion \equ{man4}
we get, for any $l>1$, the validity of \equ{G1l}.

\medskip
\noindent
{\bf Computation of $B_{11}$.} Let $\eta >0$ and $\sigma >0$ be small and fixed numbers. We write
\begin{eqnarray*}
B_{11} &=& \int_{\R^n }  (f' (u) - f' (U_1) ) Z_{01} Z_{11} \\
&=& [ \int_{B(\xi_1 , {\eta \over k^{1+ \sigma}}) } + \int_{\R^n \setminus  B(\xi_1 , {\eta \over k^{1+ \sigma}}) }]  (f' (u) - f' (U_1) ) Z_{01} Z_{11}
\\
&=& I_1+ I_2
\end{eqnarray*}
We claim that the main part of the above expansion is $I_1$. We have

\begin{eqnarray*}
I_1 &=&   \int_{B(\xi_1 , {\eta \over k^{1+ \sigma}}) } f^{''} (U_1) [ U(x) - \sum_{l>1}^k U_l (x) + \tilde \phi (x) ] Z_{01} Z_{11}  \, dx \,   +  O(\mu^{n \over 2} )
\\
&(&x= \xi_1 + \mu y )
\\
&= &    \mu^{-2}  \int_{B(0, {\eta \over \mu k^{1+ \sigma}}) } f^{''} (U) \Upsilon (y) \, Z_0 Z_1  \, dy   \\
&+&    \mu^{-2} \int_{B(0, {\eta \over \mu k^{1+ \sigma}}) } f^{''} (U) \left[  \mu^{n-2 \over 2} \tilde \phi (\mu y + \xi_1 ) ] Z_{0} Z_1  \, dx  \right] \,  +  O(\mu^{n \over 2} )  \\
&= &  \mu^{-2} \int_{B(0, {\eta \over \mu k^{1+ \sigma}}) } f^{''} (U)  [ \mu^{n-2 \over 2} U( \xi_1 + \mu y ) - \sum_{l>1}^k U (y + \mu^{-1} (\xi_1 - \xi_l ) )  ] Z_0 Z_1 \, dy \\
&+&    p (p-1) \gamma   \mu^{-2} \left[  \int_{B(0, {\eta \over \mu k^{1+ \sigma}}) }  U^{p-2}   \phi_1  ( y )  Z_0 Z_1 \, dy \right] \,    +  O(\mu^{n \over 2} )
\end{eqnarray*}
where $\phi_1 (y) = \mu^{n-2 \over 2} \tilde \phi (\mu y + \xi_1 )$.

Using the equation satisfied by $\phi_1$ and by $Z_0$ , $Z_1$  in $\R^n$, we have that
\begin{eqnarray*}
p(p-1) \gamma & &  \int U^{p-2} \phi_1 Z_0 Z_1= p  \gamma \int {\partial \over \partial y_1} U^{p-1} \, \phi_1 Z_0
\\
&=& - p  \gamma\int U^{p-1} \partial_{y_1 } \phi_1 Z_0 - p  \gamma \int U^{p-1} \phi_1 \partial_{y_1 } Z_0  \\
&=& p  \gamma \int_{B(0,{\eta \over \mu k^{1+ \sigma}})} U^{p-1} \Upsilon (y) \partial_{y_1} (U^{p-1} Z_0 )
\end{eqnarray*}
where $\Upsilon (y) =  \left[ \mu^{n-2 \over 2} U(\xi_1 + \mu y ) - \sum_{l>1}^k U(y + \mu^{-1} (\xi_1 - \xi_l )) \right]$.
Using expansions \equ{man1} and \equ{man2}, and taking into account that $\partial_{y_1} \left( U^{p-1} Z_0 \right) = (p-1) U^{p-1} Z_0 Z_1 + U^{p-1} \partial_{y_1} Z_0$,
\begin{eqnarray*}
I_1 &=& p \gamma \mu^{-2}   \int_{B(0,{\eta \over \mu k^{1+ \sigma}})} \Upsilon (y)  \partial_{y_1} \left( U^{p-1}  Z_0 \right) \\
&=&   p \gamma {n-2 \over 2}  \Biggl[ -\mu^{n-4 \over 2}  \int y_1 \partial_{y_1} (U^{p-1} Z_0 ) \, dy \\
&+&  \mu^{n-3} \sum_{l>1}^k {1\over (1-\cos \theta_l )^{n -2 \over 2} } \int y_1 \partial_{y_1}  (U^{p-1} Z_0 ) \Biggl]  +  O(\mu^{n \over 2} )
\\
&=&   O(\mu^{n \over 2} ).
\end{eqnarray*}
On the other hand, arguing as in the expansion of $A_{11}$, one can easily prove that
$$
I_2 =    O(\mu^{n \over 2} ).
$$
Taking into account \equ{nonso}, we conclude \equ{B11}.

\noindent
{\bf Computation of $B_{1l}$.} Let $l >1$ be fixed. We have
\begin{eqnarray*}
B_{1l} &=&
 \int_{B(\xi_l , {\eta \over k})}        [f'(u) - f'(U_1)] Z_{01} Z_{1l}  \, dx \,  +  O(\mu^{n \over 2} )\\
&=&-
p \gamma \, \cos \theta_l  \int_{B(\xi_l, {\eta \over k})} [\mu^{-{n-2 \over 2}} U({x-\xi_l \over \mu }) ]^{p-1} \, \mu^{-n } Z_0 ({x-\xi_1 \over \mu }) Z_1 ({x-\xi_l \over \mu })    \\
&-&
p \gamma \, \sin \theta_l  \int_{B(\xi_l, {\eta \over k})} [\mu^{-{n-2 \over 2}} U({x-\xi_l \over \mu }) ]^{p-1} \, \mu^{-n } Z_0 ({x-\xi_1 \over \mu }) Z_2 ({x-\xi_l \over \mu })    \\
&+&    O(\mu^{n \over 2} ) \\
& & (x=\mu y + \xi_l ) \\
&=&-  p \gamma \mu^{-1} \cos \theta_l  \int_{B(0,{\eta \over \mu k})} U^{p-1} Z_1 Z_0 (y+ \mu^{-1} (\xi_l - \xi_1 )) \, dy
\\
&-&  p \gamma \mu^{-1} \sin \theta_l  \int_{B(0,{\eta \over \mu k})} U^{p-1} Z_2 Z_0 (y+ \mu^{-1} (\xi_l - \xi_1 )) \, dy
 +  O(\mu^{n \over 2} ).
\end{eqnarray*}
Now using the expansion \equ{man3}
we get, for any $l>1$, \equ{B1l}.

\medskip
\noindent
{\bf Computation of $C_{11}$.} Arguing as in the computation of $G_{11}$, we are led to
\begin{eqnarray*}
C_{11} &=& p\gamma \mu^{-1} \left[  \int_{B(0,{\eta \over \mu k})} \Upsilon (y) \, \partial_{y_2} (U^{p-1} Z_0 ) \, dy \right] \, (1+ O(\mu ) ) \nonumber \\
&+&  k^{n-2} \mu^{n-1} O(1) \nonumber \\
&=& - p \gamma \mu^{n} \left(  \sum_{l>1}^k {\sin \theta_l \over (1-\cos \theta_l )^{n \over 2}} \right) \int U^{p-1} Z_0  \, + k^{n-2} \mu^{n-1} O(1),
\end{eqnarray*}
where
$$
\Upsilon (y) =  \left[ \mu^{n-2 \over 2} U(\xi_1 + \mu y ) - \sum_{l>1}^k U(y + \mu^{-1} (\xi_1 - \xi_l )) \right]
$$
so that we conclude, by cancellation, the validity of \equ{C11}.

\medskip
\noindent
{\bf Computation of $C_{1l}$.} Let $l >1$ be fixed. We have
\begin{eqnarray*}
C_{1l} &=& \
 \int_{B(\xi_l , {\eta \over k})}        [f'(u) - f'(U_1)] Z_{01} Z_{2l} \, dx +  k^{n-2} \mu^{n-1} O(1)\\
&=&-
p\gamma \, \cos \theta_l \int_{B(\xi_l, {\eta \over k})} [\mu^{-{n-2 \over 2}} U({x-\xi_l \over \mu }) ]^{p-1} \, \mu^{-n + } Z_0 ({x-\xi_1 \over \mu }) Z_2 ({x-\xi_l \over \mu })   \, dx   \\
&+&
p\gamma \, \sin \theta_l \int_{B(\xi_l, {\eta \over k})} [\mu^{-{n-2 \over 2}} U({x-\xi_l \over \mu }) ]^{p-1} \, \mu^{-n +1} Z_0 ({x-\xi_1 \over \mu }) Z_1 ({x-\xi_l \over \mu })   \, dx   \\
&+&  k^{n-2} \mu^{n-1} O(1)\\
& & (x=\mu y + \xi_l ) \\
&=& - p \gamma \mu^{-1} \, \cos \theta_l \,  \int_{B(0,{\eta \over \mu k})} U^{p-1} Z_2 Z_0 (y+ \mu^{-1} (\xi_l - \xi_1 )) \, dy \\
&+&  p \gamma \mu^{-1}\, \sin \theta_l \,  \int_{B(0,{\eta \over \mu k})} U^{p-1} Z_2 Z_0 (y+ \mu^{-1} (\xi_l - \xi_1 )) \, dy +  k^{n-2} \mu^{n-1} O(1).
\end{eqnarray*}
Now using the expansion \equ{man3}
we get, for any $l>1$, \equ{C1l}.

\medskip
\noindent
{\bf Computation of $D_{11}$.}  Arguing as in the computation of $G_{11}$, we are led to
\begin{eqnarray*}
D_{11} &=& p\gamma \mu^{-2}  \int \Upsilon (y) \, \partial_{y_1} (U^{p-1} Z_2 ) \, dy  \nonumber \\
&+&  k^{n-1} \mu^{n} O(1) \nonumber \\
&=& p \gamma  {n-2 \over 4} n \mu^{n-2} \left(  \sum_{l>1}^k {\sin \theta_l \over (1-\cos \theta_l )^{n \over 2}} \right) \int y_2 U^{p-1} Z_2 +  k^{n-1} \mu^{n} O(1)
\end{eqnarray*}
so that we conclude \equ{D11}.

\medskip
\noindent
{\bf Computation of $D_{1l}$.}
Let $l >1$ be fixed. We have
\begin{eqnarray*}
D_{1l} &=&
 \int_{B(\xi_l , {\eta \over k})}        [f'(u) - f'(U_1)] Z_{11} Z_{2l} \, dx \, +  k^{n-1} \mu^{n} O(1) \\
&=&-
p \gamma  \, \cos \theta_l  \int_{B(\xi_l, {\eta \over k})} [\mu^{-{n-2 \over 2}} U({x-\xi_l \over \mu }) ]^{p-1} \, \mu^{-n } Z_1 ({x-\xi_1 \over \mu }) Z_2 ({x-\xi_l \over \mu })  \, dx \\
&+&
p \gamma  \, \sin \theta_l  \int_{B(\xi_l, {\eta \over k})} [\mu^{-{n-2 \over 2}} U({x-\xi_l \over \mu }) ]^{p-1} \, \mu^{-n } Z_1 ({x-\xi_1 \over \mu }) Z_1 ({x-\xi_l \over \mu })  \, dx \\
&+&  k^{n-1} \mu^{n} O(1)  \\
& & (x=\mu y + \xi_l ) \\
&=&- p \gamma \mu^{-2} \, \cos \theta_l  \int_{B(0,{\eta \over \mu k})} U^{p-1} Z_2 Z_1 (y+ \mu^{-1} (\xi_l - \xi_1 )) \, dy \\
&+& p \gamma \mu^{-2} \, \sin \theta_l  \int_{B(0,{\eta \over \mu k})} U^{p-1} Z_2 Z_2 (y+ \mu^{-1} (\xi_l - \xi_1 )) \, dy
+  k^{n-1} \mu^{n} O(1).
\end{eqnarray*}
Now using the expansion \equ{man4}
we get \equ{D1l}.

\noindent
{\bf Computation of $H_{3, 11}$.}
Let $\eta >0$ and $\sigma >0$ be small and fixed numbers. We write
\begin{eqnarray*}
H_{3, 11} &=& \int_{\R^n }  (f' (u) - f' (U_1) ) Z_{31}^2 \\
&=&[ \int_{B(\xi_1 , {\eta \over k^{1+ \sigma}}) } + \int_{\R^n \setminus  B(\xi_1 , {\eta \over k^{1+ \sigma}}) }]  (f' (u) - f' (U_1) ) Z_{31}^2
\\
&=& I_1+ I_2
\end{eqnarray*}
Arguing as before one can show that
$$
I_2 = O(\mu^{n\over 2} ).
$$
In $B(\xi_1 , {\eta \over k^{1+ \sigma}})$ we can perform a Taylor expansion of the function $ (f' (u) - f' (U_1) )$ so that
\begin{eqnarray*}
I_1 &=&   \int_{B(\xi_1 , {\eta \over k^{1+ \sigma}}) } f^{''} (U_1) [ U(x) - \sum_{l>1}^k U_l (x) + \tilde \phi (x) ] Z_{31}^2 (x) \, dx + O(\mu^{n\over 2} )
\\
&(&x= \xi_1 + \mu y )
\\
&= &  \mu^{-2} \int_{B(0, {\eta \over \mu k^{1+ \sigma}}) } f^{''} (U)  \Upsilon (y)  \, Z_3^2   \\
&+&  \mu^{-2}  \int_{B(0, {\eta \over \mu k^{1+ \sigma}}) } f^{''} (U)  \mu^{n-2 \over 2} \tilde \phi (\mu y + \xi_1 )  Z_{3}^2  \, dx   + O(\mu^{n\over 2} )  \\
&= &  \mu^{-2} \int_{B(0, {\eta \over \mu k^{1+ \sigma}}) } f^{''} (U)  \Upsilon (y)   Z_3^2    \\
&+&    p (p-1) \gamma   \mu^{-2} \int_{B(0, {\eta \over \mu k^{1+ \sigma}}) }  U^{p-2}   \phi_1  ( y )  Z_3^2 \, dx + O(\mu^{n\over 2} )
\end{eqnarray*}
where $\phi_1 (y) = \mu^{n-2 \over 2} \tilde \phi (\mu y + \xi_1 )$.
Using the equation satisfied by $\phi_1$ and by $z_2$ in $\R^n$, and arguing as in the previous steps, we get
$$
p(p-1) \gamma \int U^{p-2} \phi_1 Z_3^2
 = p\gamma \int_{B(0,{\eta \over \mu k^{1+ \sigma}})} U^{p-1}\Upsilon (y) \partial_{y_3} Z_3
$$
where we recall that
$$
\Upsilon (y) =  \left[ \mu^{n-2 \over 2} U(\xi_1 + \mu y ) - \sum_{l>1}^k U(y + \mu^{-1} (\xi_1 - \xi_l )) \right] .
$$
Thus we conclude that
$$
I_1= p \gamma \mu^{-2}  \,  \int_{B(0, {\eta \over \mu k^{1+ \sigma}}) } \Upsilon (y)   \, \partial_{y_3} \left( U^{p-1} Z_3 \right) \, dy +O(\mu^{n\over 2} ) .
$$
Using the definition of $\mu$ in \equ{parameters},  we see that the first order term in expansions  \equ{man1} and \equ{man2} gives a lower order contribution to $I_1$. Furthermore, by symmetry, also the second order term in the expansions \equ{man1} and \equ{man2} gives a small contribution. Thus, the third order term in the above mentioned expansions is the one that counts. We get indeed
\begin{eqnarray*}
I_1 &=& p \, \gamma  \, {n-2 \over 4}\,  \mu^{n-2 \over 2} \, \int [{n\over 2} y_1^2 - |y|^2 ] \partial_{y_3 } (U^{p-1} z_3 )    \\
&-&   p \, \gamma \, {n-2 \over 4} \,   \mu^{n-2} \, \sum_{l>1}^k {1\over (1-\cos \theta_l )^{n \over 2} } \int \Biggl[  -1 -|y|^2 +{n \over 2} (1-\cos \theta_l ) y_1^2 \\
&+&{n \over 2} (1+ \cos \theta_l ) y_2^2 \Biggl] \partial_{y_3} (U^{p-1} Z_3 ) + O(\mu^{n\over 2} )
\\
&=&  p \gamma {n-2 \over 2}   \mu^{n-2 \over 2}  \left[ 1 -\mu^{n-2 \over 2} \sum_{l>1}^k {1 \over (1-\cos \theta_l )^{n \over 2}} \right] (-\int y_3 U^{p-1} Z_3 ) + O(\mu^{n\over 2} )
\end{eqnarray*}
Thus we conclude \equ{H311}.

\medskip
\noindent
{\bf Computation of $H_{3, 1l}$.} Let $l >1$ be fixed. Arguing as before, we get
\begin{eqnarray*}
H_{3, 1l} &=&
 \int_{B(\xi_l , {\eta \over k})}        [f'(u) - f'(U_1)] Z_{31} Z_{3l} \, + O(\mu^{n\over 2} ) \\
&=&-
p\, \gamma \int_{B(\xi_l, {\eta \over k})} [\mu^{-{n-2 \over 2}} U({x-\xi_l \over \mu }) ]^{p-1} \, \mu^{-n } Z_3 ({x-\xi_1 \over \mu }) Z_3 ({x-\xi_l \over \mu })    \\
&+& O(\mu^{n\over 2} ) \\
& & (x=\mu y + \xi_l ) \\
&=& - p \, \gamma \, \mu^{-2} \,  \int_{B(0,{\eta \over \mu k})} U^{p-1} Z_3 (y) Z_3 (y+ \mu^{-1} (\xi_l - \xi_1 )) \, dy + O(\mu^{n\over 2} )
\end{eqnarray*}
Now using the expansion \equ{man6}
we get \equ{H31l}.

\end{document}